\documentclass[12pt,leqno]{amsart}

\usepackage[utf8]{inputenc}   
\usepackage[T1]{fontenc}      
\usepackage{geometry}         
\usepackage[all]{xy}
\usepackage{amssymb}
\usepackage{enumerate}
\usepackage{enumitem}
\usepackage{stmaryrd}

\voffset-1cm \markleft{\rm AcAa-algebras} 
\newtheorem{theorem}{Theorem}
\newtheorem{definition}[theorem]{Definition}
\newtheorem{corollary}[theorem]{Corollary}

\newtheorem{lemma}[theorem]{Lemma}

\newtheorem{proposition}[theorem]{Proposition}

\newtheorem{defi}{D\'{e}finition}

\newcommand{\K}{\mathbb K}
\newcommand{\C}{\mathbb C}

\newcommand{\g}{\frak{g}}
\newcommand{\h}{\frak{h}}

\newcommand{\f}{\frak{f}}
\newcommand{\we}{\wedge}
\newcommand{\N}{\mathbb N}
\newcommand{\Z}{\mathbb Z}
\newcommand{\R}{\mathbb R}

\newcommand{\om}{\omega}

\newcommand{\ra}{\rightarrow}

\newcommand{\pf}{\noindent{\it Proof. }}
\newcommand{\ds}{\displaystyle}

\newcommand{\im}{{\rm Im }}

\newcommand{\ad}{\text{\rm ad}}
\newcommand{\Ker}{\text{\rm Ker }}

\usepackage{graphicx}

\title{Contact and 2-compatible Lie algebras}
\author{Elisabeth Remm}
\date{}
\address{Universit\'e de Haute-Alsace, IRIMAS UR 7499, F-68100 Mulhouse, France.}
\email{elisabeth.remm@uha.fr}

\begin{document}

\maketitle

\begin{abstract} A $n$-dimensional Lie algebra $\g(V,\mu)$ is called $2$-compatible if it is isomorphic to a quadratic deformation of a Lie algebra $\g_0=(V,\mu_0)$.  By quadratic deformation we means a formal deformation $\mu_t=\mu_0+t\varphi_1+t^2\varphi_2$ where $\mu_t$ is a Lie algebra on $V \otimes \K[[t]]$. It is equivalent to say that we have the following system $\sum_{i+j \leq 4} \varphi_i \circ \varphi_j= 0$. This notion naturally appears in the theory of classification of contact Lie algebras because any $(2p+1)$-dimensional contact Lie algebra is isomorphic to a quadratic deformation of the Heisenberg algebra $\mathcal{H}_{2p+1}$.
\end{abstract}

\noindent{\bf MSC} : 17A30- 17B05 - 17B08- 53D10

\noindent{\bf Keywords}: Contact Lie algebras. Quadratic deformations. Compatible Lie algebras. 


\bigskip

\noindent{\bf Introduction}

A compatible Lie algebra is a pair of Lie algebras such that any linear combination of these two Lie brackets is still a Lie bracket.    This notion appears in the study of linear Poisson structures and  also in the study of bihamiltonnian structures or of the classical Yang-Baxter equation. See for example \cite{Bay} for an introduction to this notion as well as for applications to mathematical physics.  In this last work,  a cohomological  theory for compatible Lie algebras is studied. This notion
 of compatible Lie algebra is  in relation to infinitesimal deformations, also called linear deformations, of a finite-dimensional Lie algebra.  For example, let us consider the variety $\mathcal{N}ilp_n$ of $n$-dimensional nilpotent complex Lie algebras  and its open set $\mathcal{F}il_n$  which elements are the filiform Lie algebras (that is the nilpotent Lie algebra of a maximal nilindex that is equal to $n-2$). Then from \cite{G.H}, any deformation in $\mathcal{F}il_n$ is isomorphic to a linear deformation and thus to a compatible nilpotent Lie algebra.  This class of compatible nilpotent Lie algebras has been recently studied in  \cite{La} which is  devoted to the classification problem.
 
 In this work, we focus on a more general notion of compatibility, which we will call $2$-compatibility and which is provided by two Lie algebras brackets connected by an alternating bilinear mapping. In fact, this notion corresponds to a quadratic deformation of one of these two brackets. After sketching a general study of this structure, we turn our attention to the case of Lie algebras equipped with a contact form. In fact this work was motivated by the study of this class of Lie algebras  (and the reading of interesting results on compatible Lie algebras). Indeed, it has been shown in \cite{G.R.contact} that every Lie algebra equipped with a contact form, which we will briefly call contact Lie algebra, is a linear or quadratic deformation of the Heisenberg algebra.

 This paper is organized as follows. In Section 1 we define the notion of $2$-compatible Lie algebras and the link with the quadratic deformations. Section 2 concerns the study of the nilpotent case. Finally, in  Section 3, we study contact Lie algebras and  begin the description of this class of Lie algebras for the dimensions $3$ and $5$. This classification reveals a Lie group attached to the existence of a form of contact. In the last section, we describe this group and the Lie algebra associated with it.
 
 In this paper, all vector spaces are over algebraically closed field $\K$ of characteristic $0$ and finite dimensional.

\section{Definition}
\subsection{Definition of a $2$-compatible Lie algebras}
Recall that  an algebra $A=(V,\mu)$ over $\K$  is a $\K$-vector space $V$ equipped with a bilinear product $\mu$. If $\varphi$ and $\psi$ are two bilinear products on a vector space $V$, the composition product is the trilinear application given by
$$\varphi \circ \psi (a,b,c)=\varphi(\psi (a,b),c)+\varphi(\psi(b,c),a)+\varphi (\psi(c,a),b)$$for  $a,b,c \in V$. In particular $\g=(V, \mu)$ is a Lie algebra if the skew-symmetric bilinear application $\mu$ satisfies the Jacobi condition
$$\mu \circ \mu =0.$$
\subsection{Definition}
\begin{definition} Let $V$ be a finite dimensional $\K$-vector space.
Let $\g_1=(V,\mu_0)$ and $\g_2=(V,\varphi_2)$ be two Lie algebras  with the same underlying vector space $V$. The triple $\g=(V,\mu,\varphi_2)$ is called a $2$-compatible Lie algebra if there exists a skew-symmetric bilinear map $\varphi_1: V \otimes V \ra V$ satisfying
\begin{align*}
\delta_{\mu_0} \varphi_1=0, \\
\varphi_1 \circ \varphi_1+\delta_{\mu_0} \varphi_2=0,\\
\delta_{ \varphi_2} \varphi_1=0 
\end{align*}
where $\delta_{\mu_0}$ (resp. $\delta_{ \varphi_2}$ ) is  the cohomological operator associated with the Chevalley-Eilenberg cohomology of $\g_1$ (resp. $\g_2$) with values in $V$ and $\circ$ the composition product. With this operator
$$\delta_{\mu_0} \varphi_1=\mu_0 \circ \varphi_1 + \varphi_1 \circ \mu_0.$$
\end{definition}
To highlight the role of $\varphi_1$, we will also write the $2$-compatible Lie algebra as a quadruplet $\g=(V,\mu_0,\varphi_1,\varphi_2)$.

\noindent{\bf Examples} 
\begin{enumerate}
  \item A compatible Lie algebra, notion defined in \cite{Bay,La}, is a triple $\g=(V,\mu_0,\varphi_1)$ where $\mu_0$ and $\varphi_1$ are Lie brackets on $V$ with the compatible relation $\delta_{\mu_0} \varphi_1=\delta_{\varphi_1} \mu_0 =0.$ This structure can be view as a particular case of $2$-compatible Lie algebra corresponding to $\varphi_2=0$ and $\varphi_1 \neq 0.$
  \item Let $\g=(V,\mu_0,\varphi_1,\varphi_2)$ be a $2$-compatible Lie algebra. If $\varphi_1=0$, then $\g=(V,\mu_0,\varphi_2)$ is also a compatible Lie algebra.
   \end{enumerate}

\subsection{Quadratic deformations}
Let $\h$ be a Lie algebra on $V$ whose Lie bracket is denoted by $\mu_0$. A formal deformation of $\mu_0$  is given by a family
$$\varphi_i: V \otimes V \rightarrow V, \ \ i \in \N$$
satisfying $\varphi_0=\mu_0$ and
$$(D_k): \ \ \ \ \ \sum_{i+j=k, i,j\geq 0} \varphi_i\circ \varphi_j
=0
$$
for each $k \geq 1$.
We associate with this family of bilinear maps the power series  $\mu(X,Y)$ given by
$$\mu(X,Y) = \mu_0(X,Y) + t\varphi_1(X,Y) + t^2\varphi_2(X,Y)+ \cdots$$
Then  $\mu$ is Lie bracket if and only if $(D_k)$ are satisfied for each $k \geq 1$. In particular we have
$$\begin{array}{l}
 \mu_0 \circ \varphi_1+\varphi_1 \circ \mu_0=\delta_{\mu_0}\varphi_1=0,\\
  \varphi_1\circ \varphi_1+\delta_{\mu_0}\varphi_2=0,\\
 \varphi_1\circ \varphi_2+\varphi_2\circ \varphi_1+\delta_{\mu_0} \varphi_3=0,\\
 \varphi_2\circ \varphi_2+\varphi_1\circ \varphi_3+\varphi_3\circ \varphi_1+\delta_{\mu_0} \varphi_4=0,\\
     \end{array}
     $$
The formal series $\mu(X,Y)$ is called a deformation of the Lie bracket $\mu_0$ on $V$. Such deformation is linear if $\varphi_i=0$ for $i \geq 2$ and it is called quadratic if $\varphi_i=0$ for $i \geq 3$. In this last case, the previous conditions  corresponds to the axioms of a $2$-compatible Lie algebra, that is  $(V,\mu_0,\varphi_2)$ is a $2$-compatible Lie algebra. Thus we have a ono-one correspondence between the quadratic deformations of $\mu_0$ and the $2$-compatible Lie algebra based on $\mu_0$.

Remark that a linear deformation corresponds to a structure of compatible Lie algebra: to the deformation $\mu_0+t\varphi_1$ corresponds the compatible Lie algebra $(V,\mu_0,\varphi_1).$ In this case $\g_0=(V,\mu_0)$ and $\g_1=(V,\varphi_1)$ are two Lie algebras. But if we consider two Lie algebra structures on $V$, in general the triple $(V,\mu_0,\varphi_1)$ is not compatible because the axiom $(D_1)$ is not satisfied. For example, in  (\cite{La} Example 2.13), one gives the following example: one considers on $V=\K\{e_1,e_2,e_3\}$ the Lie brackets $\mu_0$ and $\varphi$ given by
$$
\left\{
\begin{array}{l}
    \mu_0(e_1,e_2)=e_1    \\
\varphi(e_1,e_2)=e_3,\  \ \varphi(e_1,e_3)=-e_1, \ \ \varphi(e_2,e_3)=e_2,
\end{array}
\right.
$$
(recall that the non defined bracket are considered to be zero). But $\g=(V,\mu_0,\varphi)$ is not a compatible Lie algebra because
$$\delta_{\mu_0}\varphi(e_1,e_2,e_3)=-  e_1.$$ 
It is also not a $2$-compatible Lie algebra because there is no $\varphi_1$ satisfying the relations $D_i$'s, that is there exits no $\varphi_1$ which is a cocyle for $\mu_0$ and $\varphi$.

\noindent{\bf Remark.}  If $\g=(V,\mu_0,\varphi_1,\varphi_2)$ is a $2$-compatible Lie algbebra, then $\widetilde{\g}=(V,\varphi_2,\varphi_1,\mu_0)$ is also a $2$-compatible Lie algebra. 

\subsection{Subalgebras and ideals of $2$-compatible Lie algebras}

\begin{definition} A subalgebra of a $2$-compatible Lie algebra $\g=(V,\mu,\varphi_1,\varphi_2)$ is a quadruplet $\h=(V_1,\mu,\varphi_1,\varphi_2)$ where $V_1$ is a subspace of $V$  which is stable for the bilinear maps $\mu,\varphi_1,\varphi_2$. 

An ideal $I$ of a $2$-compatible Lie algebra $\g=(V,\mu,\varphi_1,\varphi_2)$ is a subalgebra such that
$$\mu(V,I) \subset I,  \ \varphi_1(V,I) \subset I, \  \varphi(V,I) \subset I.$$
\end{definition}
In particular, the center $\mathcal{Z}(\g)$ is the ideal of $\g$ whose elements $X\in V$ satisfy $\mu(X,Y)=\varphi_1(X,Y)=\varphi_2(X,Y)=0$ for any $Y \in V$. 

An interesting class of $2$-compatible Lie algebras is that of nilpotent algebras. A study of nilpotent compatible algebras is made in \cite{La}. For example, any filiform nilpotent Lie algebra is isomorphic to compatible nilpotent Lie algebra $(\g,\mu,\varphi)$ where $\mu$ is given in an adapted basis $\{X_0,X_1,\cdots,X_n\}$ by
$$\mu(X_0,X_I)=X_{i+1}, \ i=1,\cdots,n-1$$
and $\varphi$ is given by 
$$\varphi(X_i,X_j)=\sum{k \geq i+j}C_{ij}^k X_k.$$
 Let  $\g=(V,\mu,\varphi_1,\varphi)$ be a $2$-compatible Lie algebra. Then $\g$ is nilpotent if the lower central series of  $\mu_t=\mu+t\varphi_1+t^2\varphi_2$ satisfies $\mathcal{C}^N(\mu_t)=0$ for some $N >0$. This lower series is also given by
$$\mathcal{C}^0(\g)=V, \ \mathcal{C}^1(\g)=\mathcal{C}^1(\mu)+ \mathcal{C}^1(\varphi_1)+\mathcal{C}^1(\varphi_2)$$
that is $\mathcal{C}^1(\g)$ is the linear space $\llbracket V,V\rrbracket$ generated by the vectors $\mu(X,Y),\varphi_1(X,Y),\varphi_2(X,Y).$ Thus we define $\mathcal{C}^2(\g)=\llbracket V,\llbracket V,V\rrbracket \rrbracket$ and more generally
$$\mathcal{C}^{p+1}(\g)=\llbracket V,\mathcal{C}^p(\g)\rrbracket$$
for $p \geq 1$ In particular, a $2$-compatible Lie algebra is $2$-step nilpotent (or metabelian) if $\mathcal{C}^2(\g)=0.$ In this case the Lie algebras $(V,\mu)$ and $(V,\varphi_2)$ are also $2$-step nilpotent Lie algebras and $\varphi_1$ have to satisfy
$$
\left\{
\begin{array}{l}
      \mu(X,\varphi_1(Y,Z))+\varphi_1(X,\mu(Y,Z))=0    \\
       \mu(X,\varphi_2(Y,Z))+\varphi_2(X,\mu(Y,Z))+\varphi_1(X,\varphi_1(Y,Z))=0 \\
        \varphi_2(X,\varphi_1(Y,Z))+\varphi_1(X,\varphi_2(Y,Z))=0   
\end{array}
\right.
$$
for any $X,Y,Z \in V.$ 

Let us note that these relations imply in particular that $\varphi_1$ and $\varphi_2$ are $2$-cocycles for the Chevalley-Eilenberg cohomology of $\mu$. But the converse is false.

\noindent{\bf Examples}
\begin{enumerate}
\item The $7$-dimensional $2$-compatible Lie algebra given by
$$\mu(e_1,e_{2i})=e_{2i+1}, \ i=1,2,3, \ \ \varphi_1(e_2,e_4)=e_5, \ \varphi_2(e_4,e_6)=e_7$$
is $2$-step nilpotent.
  \item In \cite{G.H2} the notion of characteristic sequence of a complex nilpotent Lie algebra is defined. Let $(V,\mu)$ a complex nilpotent Lie algebra and let be $c(\mu)=(c_1,c_2,\cdots,c_k,1)$ its characteristic sequence. This means that there exists $X_1 \in V$ such that $c(\mu)$ is the characteristic sequence of the nilpotent operator $ad_\mu X_1$. Let $\{X_2,\cdots X_n,X_1)$ the Jordan basis associated with this sequence. Then $\g=(V,\mu)$ is a compatible Lie algebra, $\g=(V,\mu_0,\varphi)$ with $\mu_0$ given by
  $\mu_0(X_1,X_i)=\mu(X_1,X_i)$ and $\varphi(X_1,X_i)=0$ , $\varphi(X_i,X_j)=\mu(X_i,X_j).$ We can also consider $\g$ as a $2$-compatible Lie algebra. In fact we have $\g=(V,\mu_1,\varphi_1,\varphi_2)$ with
  $$
  \left\{
  \begin{array}{l}
  \mu_1(X_1,X_i)=X_{i+1}, \ i=2, \cdots c_2-1, \\
   \varphi_1(X_1, X_i)=\mu(X_1,X_i), \ i=c_2,\cdots,n, \\ 
   \varphi_2(X_i,X_j)=\mu(X_i,X_j) , i,j \neq 1.
  \end{array}
  \right.
  $$
  
\end{enumerate}

\section{Contact Lie algebras.}

\subsection{Some recalls}

Let $\g$ be a $2p+1$-dimensional contact Lie algebra, that is provided with a linear form $\om \in \g^*$, the dual space of $\g$, satisfying $\om \we d\om^p \neq 0$. Recall that $d\om$ is the $2$-exterior form defined by $d\om(X,Y)=-\om[X,Y]$ where $[,]$ is the Lie bracket of $\g$. The problem to classify up to an isomorphism the class of these algebras in a given dimension is, in the general form, still open. However, let us recall some essential results, but this list is not exhaustive.

$\bullet$ A semi-simple Lie algebra is a contact Lie algebra if and only if its rank is equal to $1$ \cite{Go}.

$\bullet$ The classification of  $7$-dimensional contact nilpotent Lie algebras as double extensions \cite{Salg1}.

$\bullet$  The description of some invariants of contact Lie algebras such that the Pfaffian and the characterization of perfect contact Lie algebras \cite{Salg2}.

$\bullet$ The classification of some contact Lie subalgebras (the seaweed algebras) of reductive Lie algebras. Recall that a seaweed algebra is a subalgebra of a reductive Lie algebra $\frak{r}$ defined
by the intersection of two parabolic subalgebras of $\frak{r}$ whose sum is $\frak{r}$ \cite{Co}.
\medskip

If $\g$ is a contact Lie algebra, there is a (not uniq) basis  $\{\om_1,\om_2,\cdots,\om_{2p+1}\}$ of $\g^*$ such that the contact form $\om=\om_1$ satisfies
$$d\om_1=\om_2 \we \om_3 + \om_4 \we \om_5 + \cdots + \om_{2p} \we \om_{2p+1}.$$
Such a basis, and its dual basis $\{e_1,e_2,\cdots ,e_{2p+1}\}$ of $\g$,  will be called a Darboux basis.  Then the structural equations of $\g$ are
$$
\left\{
\begin{array}{l}
     d\om_1=\om_2 \we \om_3 + \om_4 \we \om_5 + \cdots + \om_{2p} \we \om_{2p+1}   \\
     d\om_i= \sum C_{jk}^i \om_j \we \om_k ,\ \ i \geq 2
\end{array} 
\right.
$$
For example, the $(2p+1)$-dimensional Heisenberg Lie algebra $\mathcal{H}_{2p+1}$ is a contact Lie algebra corresponding to $C_{jk}^i=0.$

 In such a basis, the vector $e_1$ plays a special role. It is the only vector satisfying
$$\om(X)=1, \ \ i(X)d\om=0$$
where $i(X)d\om (Y)=d\om(X,Y)$. This vector is called the Reeb vector.

 \subsection{On the ideals of a contact Lie algebra}  Assume  that  $\g=(V,\mu)$ is a contact Lie algebra, i.e. $\g=(\mu_0,\varphi_1,\varphi_2)$ a $2$-compatible Lie algebra with $\mu_0$ the Heisenberg bracket and $\{e_1,\cdots,e_{2p+1}$ a Darboux basis associated with the contact form $\om_1$.
 According to \cite{Go}, any contact Lie algebra of dimension greater or equal to  $5$ admits a nontrivial ideal. In dimension $3$, then  $sl(2)$ or $so(3)$ are contact Lie algebras, but any contact Lie algebra of dimesion greater or equal to $5$ is not simple. 
 
 \begin{proposition} Let $\g$ be a $2p+1$-dimensional contact Lie algebra, $p \geq 2$. Then $\g$ admits a non trivial ideal. If $I$ is an nontrivial ideal of  $\g$, then there exists a vector $U \in \g$ such that $e_1+U \in I$, where $e_1$ is the Reeb vector  associated with the contact form.
 \end{proposition}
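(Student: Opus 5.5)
The proposition has two parts. I would treat the existence of an ideal as a consequence of the results recalled above. The statement about the Reeb vector I would derive directly from the nondegeneracy of $d\om$ on $\ker \om$.

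\emph{Existence.} By \cite{Go}, a semi-simple Lie algebra carries a contact form only if its rank is $1$. Such an algebra is then $sl(2)$ or $so(3)$, of dimension $3$. Since $2p+1\geq 5$, $\g$ is not semi-simple, so its radical $\frak{r}$ is a nonzero ideal.
\begin{itemize}
\item If $\frak{r}\neq\g$, then $\frak{r}$ is the required nontrivial ideal.
\item If $\frak{r}=\g$, then $\g$ is solvable. Its derived algebra $\mathcal{C}^1(\g)=[\g,\g]$ is then a proper ideal.
\item If moreover $[\g,\g]=0$, then $\g$ is abelian. This is impossible, because $d\om=-\om\circ[\,,]$ would vanish, contradicting $\om\we d\om^p\neq 0$.
\end{itemize}
So in every case $\g$ has a nonzero proper ideal. (Alternatively one may simply quote the statement from \cite{Go} recalled just before the proposition.)

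\emph{Reeb component.} Let $I$ be a nonzero ideal. The key step is to show that $I\not\subset\ker\om$. Suppose instead that $\om(X)=0$ for every $X\in I$. For $X\in I$ and any $Y\in\g$ we have $[X,Y]\in I$, hence $d\om(X,Y)=-\om([X,Y])=0$. Thus $i(X)d\om=0$ and $\om(X)=0$.

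The condition $\om\we d\om^p\neq 0$ means that $d\om$ restricted to $\ker\om$ is nondegenerate. In the Darboux basis this is visible directly: $d\om_1=\sum_k \om_{2k}\we\om_{2k+1}$ pairs the vectors $e_2,\dots,e_{2p+1}$ symplectically. Hence the kernel of $d\om$ on $\g$ is exactly $\K e_1$. An element $X$ with $i(X)d\om=0$ must therefore be a multiple $\lambda e_1$. The condition $\om(X)=0$ then forces $\lambda=0$, so $X=0$. This gives $I=0$, a contradiction.

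Hence there exists $X\in I$ with $\om(X)=a\neq 0$. Since $I$ is a subspace, $a^{-1}X\in I$. Writing $a^{-1}X=e_1+U$ with $U=a^{-1}X-e_1\in\ker\om=\K\{e_2,\dots,e_{2p+1}\}$ gives the required vector $U$.

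The only real content lies in the nondegeneracy argument, and it is immediate from the Darboux form. The existence part rests entirely on the cited result of \cite{Go}. The main point to check there is that the solvable and abelian cases are correctly excluded, which the argument above handles.
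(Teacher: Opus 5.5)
Your proof is correct. For the Reeb component it follows the paper's argument exactly: if $\om(I)=0$, then every $X\in I$ satisfies $\om(X)=0$ and $i(X)d\om=0$, so $I$ lies in the characteristic space of $\om$, which is zero. You make that last fact explicit through the Darboux form and normalize to obtain $U\in\ker\om$. For existence the paper simply quotes \cite{Go}; you instead deduce it from the rank-one criterion for semi-simple contact algebras together with the radical and derived-algebra argument, which is a valid and somewhat more self-contained justification.
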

 \pf Recall that the Reeb vector $R$ of a contact form $\om$ is given by $\omega(R)=1, \ d\om(R,X)=0$ for any $X \in \g$. Assume that $\om(I)=0$. Then for any $X \in I$ we have $\om(X)=0$ and $i(X)d\om (Y)=-\omega[X,Y]=0$ for any $Y \in \g$. Then $I$ is a subspace of the characteristic space of $\om$. Since this space is reduced to $0$, we have a contradiction. This implies that there is $X=R+U \in I$. 
 
 \medskip
 
 \noindent{\bf Example}.  Let $\g$ be the $5$-dimensional Lie algebra isomorphic to $so(3) \oplus \mathfrak{r}(2)$ where $\mathfrak{r}(2)$  is the non abelian solvable $2$-dimensional Lie algebra. Let us consider the Lie bracket of $\g$ given by
$$[e_1,e_2]=e_2, \ [e_1,e_3]=-e_3, \ [e_2,e_3]=e_1, \ [e_2,e_4]=e_2, \ [e_3,e_4]=-e_3, \ [e_4,e_5]=e_1+e_4.$$
If $\{\om_i\}$ is the dual basis, then $\om_1$ is a contact linear form, $e_1$ is its Reeb vector and $I=\K\{e_1+e_4,e_5\}$ is an Ideal of $\g$. Here we have $U=e_4$. Let us note that, in this case, $f$ is given by
$$f(e_2)=e_2, \ f(e_3)=-e_3$$
and its rank is $2$.

\subsection{Contact Lie algebras and $2$-compatible Lie algebras}

In \cite{G.R.contact} we have seen that any $2p+1$-dimensional contact Lie algebra is isomorphic to a quadratic deformation of $\mathcal{H}_{2p+1}$. This is equivalent to say that, in a Darboux basis, we can consider that $\g$ is a $2$-compatible Lie algebra
$\g=(V,\mu,\varphi_1,\varphi_2)$ where $\mu$ is the Lie bracket of $\mathcal{H}_{2p+1}$. If $\{e_1,\cdots,e_{2p+1}\}$ is a Darboux basis and if  $(C_{ij}^k)$ are the structure constants of $\g$ with respect this basis  then the bilinear maps $\varphi_1$ and $\varphi_2$ are given by 
 $$
\left\{
\begin{array}{l}
\ds
\varphi_1(e_i,e_j)=\sum_{k=2}^{2p+1}C_{ij}^ke_k,   \ \ 2 \leq i < j \leq 2p+1,\\ 
\ds
  \varphi_2(e_1,e_i)= \sum_{k \geq 2}C_{1i}^k e_k,  \ \ 2 \leq i \leq 2p+1,
\end{array} 
\right.  
$$
 the non defined products are considered to be zero. These bilinear maps must satisfy
 \begin{align}
\label{EQ2} \delta_\mu \varphi_1=0,\\
\label{EQ3} \varphi_1 \circ \varphi_1 + \delta_\mu \varphi_2=0,\\
\label{EQ4} \varphi_1 \circ \varphi_2 + \varphi_2 \circ \varphi_1=0,\\
\label{EQ5} \varphi_2 \circ \varphi_2=0.
 \end{align}

\medskip

\noindent{\bf Example: Dimension 3}
Assume that $\mu$ is the Lie bracket of $\mathcal{H}_3$:
$$\mu(e_2,e_3)=e_1.$$
 For any skew-symmetric bilinear map given by $\psi(e_i,e_j)=\sum x_{ij}^k e_k$, we have
$$\delta_\mu\psi(e_1,e_2,e_3)=(x_{12}^2+x_{13}^3)e_1$$
Assume that $\g$ is a contact $3$-dimensional Lie algebra and let $\g=(V,\mu,\varphi_1,\varphi_2)$ be the associated $2$-compatible Lie algebra. We have
$$
\left\{
\begin{array}{l}
\varphi_1(e_2,e_3)=C_{23}^2 e_2+C_{23}^3 e_3,\\
\varphi_2(e_1,e_2)=C_{12}^2e_2+C_{12}^3e_3, \ \ \varphi_2(e_1,e_3)=C_{13}^2e_2+C_{13}^3e_3.
\end{array}
\right.
$$
Recall that the non defined product are considered to be null. Equations (\ref{EQ2}) and (\ref{EQ5}) are satisfied.  
Since $\varphi_1(e_i,e_j) \in \K\{e_2,e_3\}$, we have $\varphi_1 \circ \varphi_1 (e_1,e_2,e_3)=0$. Thus   $\delta_\mu \varphi_2=0$. This is equivalent to 
$$C_{13}^3=-C_{12}^2.$$ Now, Equation (\ref{EQ4}) is equivalent to
$$
\begin{pmatrix}
  C_{12}^2   &   C_{13}^2 \\
  C_{12}^3   &  -C_{12}^2 \\
\end{pmatrix}
\begin{pmatrix}
      C_{23}^2    \\
     C_{23}^3 
\end{pmatrix}=0.$$
If $\varphi_2=0$, then   the triple $(V,\mu,\varphi_1)$ is a compatible contact Lie algebra. In this case the $3$-dimensional contact Lie algebra is a one dimensional extension of a $2$-dimensional symplectic Lie algebra $\g_1=(\K\{e_2,e_3\},\varphi_1)$ with symplectic form  $\theta=\om_2 \we \om_3.$ In this case, $\g$ is isomorphic to $\mathcal{H}_3$.

\noindent If $\varphi_2 \neq 0$, then

i) $\Delta =-(b_{12}^2)^2-b_{12}^3b_{13}^2 \neq 0$. This implies  $\varphi_1=0$ and $\g$ corresponds to a simple Lie algebra,

ii)  $\Delta=0$. The eigenvalues of the matrix $ \begin{pmatrix}
   C_{12}^2  &  C_{12}^3  \\
C_{13}^2   &  -C_{12}^2
\end{pmatrix}$ are $0$. 
If $f$ is the endomorphism of $\K\{e_2,e_3\}$ corresponding to this matrix, this endomorphism is nilpotent. Then $f=0$ or $f$ can be reduced to $f(e_2)=e_3.$
This gives the complete list of $3$-dimensional contact Lie algebras.



\bigskip

Let us resume the study of the general case. Let $\g$ be a $(2p+1)$-dimensional contact Lie algebra with $p \geq 2$ and $(V,\mu,\varphi_1,\varphi_2)$ its corresponding $2$-compatible structure. 
\subsubsection{Description of $\varphi_2$}

By construction Equation (\ref{EQ5}) is always satisfied and $(V,\varphi_2)$ is a solvable Lie algebra obtained by an extension by derivation of a $2p$-dimensional abelian Lie algebra. Let $V_2=\K\{e_2,\cdots,e_{2p+1}\}$ be this abelian Lie algebra. Let $f$ be the endomorphism of $V_2$  given by
$$f(e_i)=\varphi_2(e_1,e_i)=\sum_{k \geq 2}C_{1i}^k e_k, \ \ 2 \leq i \leq 2p+1.$$
Since $\varphi_1 \circ \varphi_1(e_1,e_i,e_j)=0$, Equation (\ref{EQ3}) implies  $\delta_\mu\varphi_2(e_1,e_i,e_j)=0.$  In particular 
$$
\begin{array}{l}
 \delta_\mu\varphi_2(e_1,e_{2i},e_{2j})=0 \Rightarrow -C_{1,2i}^{2j+1}+C_{1,2j}^{2i+1}=0,    \\
 \delta_\mu\varphi_2(e_1,e_{2i},e_{2j+1})=0 \Rightarrow C_{1,2i}^{2j}+C_{1,2j+1}^{2i+1}=0,    \\
  \delta_\mu\varphi_2(e_1,e_{2i+1},e_{2j+1})=0 \Rightarrow -C_{1,2i+1}^{2j}+C_{1,2j+1}^{2i}=0.    \\
\end{array}
$$
We deduce that the matrix of $f$ in the Darboux basis $\{e_2,\cdots,e_{2p+1}\}$ is of the type
     $$ F=
     \begin{pmatrix}
    A_{11}  & \widetilde{A_{21}}  & \widetilde{A_{31}} & \widetilde{A_{41}} & \cdots & \widetilde{A_{p1}}  \\
     A_{21} & A_{22} & \widetilde{A_{32}} &  \widetilde{A_{42}} &\cdots & \widetilde{A_{p2}}  \\
      A_{31} & A_{32} & A_{33} & \widetilde{A_{43}}& \cdots & \widetilde{A_{p3}}  \\
      \cdots &\cdots &\cdots &\cdots & \cdots &\cdots  \\
      A_{p1} & A_{p2} & A_{p3} & A_{p4} &\cdots & A_{pp}
\end{pmatrix}
$$
where the matrices $A_{ij}$ are of order $2$,  $tr A_{ii}=0$ for $i=1,\cdots,p$ and the matrices $\widetilde{A_{ij}}$ are defined as follows:
if $A=
\begin{pmatrix}
     a & b   \\
     c & d
\end{pmatrix}$, then  $\widetilde{A}=
\begin{pmatrix}
     -d & b   \\
     c & -a
\end{pmatrix}$, that it the opposite transposed of the cofactors matrix associated with $A$. In particular we have
$$ A \cdot \widetilde{A}=-\det A.Id.$$

\medskip
\begin{proposition}
Let $\tilde{f}$ be the endomorphism of $V$ defined by $\tilde{f}(e_1)=0$ and $\tilde{f}(e_i)=f(e_i)$ for $i \geq 2$. Then $\tilde{f}\in \mathcal{D}er(\mu)$.
\end{proposition}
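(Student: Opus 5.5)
We have to check that for all $X,Y\in V$
$$\tilde f(\mu(X,Y))=\mu(\tilde f(X),Y)+\mu(X,\tilde f(Y)),$$
where $\mu$ is the Heisenberg bracket, $\mu(e_{2i},e_{2i+1})=e_1$, all other brackets of basis vectors being zero. The plan is to test this identity on pairs of basis vectors and translate the result into the conditions on $C_{1i}^k$ already obtained from $\delta_\mu\varphi_2(e_1,e_i,e_j)=0$.

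First, the image of $\mu$ is $\K e_1$ and $\tilde f(e_1)=0$. Hence the left-hand side always vanishes, and the derivation condition reduces to
$$\mu(\tilde f(X),Y)+\mu(X,\tilde f(Y))=0 \quad\text{for all } X,Y.$$
If $X=e_1$, both terms vanish: $\tilde f(e_1)=0$, and $e_1$ is central for $\mu$. So it suffices to take $X=e_i$, $Y=e_j$ with $2\le i,j\le 2p+1$.

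Next, write $\mu(X,Y)=-\om_1([X,Y])\,e_1$ up to sign, that is, $\mu(X,Y)=\Omega(X,Y)e_1$ on $V_2$, where $\Omega=\om_2\we\om_3+\cdots+\om_{2p}\we\om_{2p+1}$ restricted to $V_2$. The condition then becomes
$$\Omega(f(e_i),e_j)+\Omega(e_i,f(e_j))=0,$$
which says that $f$ lies in the symplectic Lie algebra $sp(V_2,\Omega)$.

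Finally, compute $\Omega(f(e_i),e_j)$ directly. Since $f(e_i)=\sum_k C_{1i}^k e_k$, pairing with $e_j$ picks out the coefficient of the symplectic partner of $e_j$, namely $\pm C_{1i}^{j'}$, where $j'=j+1$ if $j$ is even and $j'=j-1$ if $j$ is odd. The three cases (even/even, even/odd, odd/odd) then give exactly the three relations displayed before the statement:
$$-C_{1,2i}^{2j+1}+C_{1,2j}^{2i+1}=0,\qquad C_{1,2i}^{2j}+C_{1,2j+1}^{2i+1}=0,\qquad -C_{1,2i+1}^{2j}+C_{1,2j+1}^{2i}=0.$$
These include the case $i=j$, which yields the conditions $\operatorname{tr}A_{ii}=0$. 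Equivalently, this is the block description of $F$ with $\operatorname{tr}A_{ii}=0$ and the $\widetilde{A_{ij}}$ blocks, which is precisely the condition $F^tJ+JF=0$ for $J$ the standard symplectic matrix.

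The argument rests on a cleaner observation: by definition,
$$\delta_\mu\varphi_2(e_1,e_i,e_j)=\mu(\varphi_2(e_1,e_i),e_j)+\mu(\varphi_2(e_j,e_1),e_i)+\varphi_2(\mu(e_i,e_j),e_1)+\cdots.$$
The terms involving $\varphi_2(e_1,e_1)$ vanish, and terms with $\mu(e_1,\cdot)$ vanish since $e_1$ is central. So $\delta_\mu\varphi_2(e_1,e_i,e_j)$ is, up to sign, exactly
$$\mu(f(e_i),e_j)+\mu(e_i,f(e_j))=(\tilde f\cdot\mu)(e_i,e_j).$$
Hence the vanishing established earlier is literally the derivation identity, and no case analysis is needed.

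The only delicate point is bookkeeping of signs in the composition product $\circ$. I would verify directly that the two surviving terms appear with the same relative sign, so that the expression is $\mu(f e_i,e_j)-\mu(f e_j,e_i)$ and not a difference that would give the opposite symmetry. This is cross-checked by the displayed relations, for instance via $\operatorname{tr}A_{ii}=0$.
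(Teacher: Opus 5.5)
Your proof is correct and is essentially the paper's argument. The paper also reduces the statement to the identity $\delta_\mu\varphi_2(e_1,e_i,e_j)=\mu(f(e_i),e_j)+\mu(e_i,f(e_j))$; expanding $\mu\circ\varphi_2+\varphi_2\circ\mu$ shows this holds exactly, with no extra sign. Its vanishing, together with $\tilde f(\mu(V,V))\subset\tilde f(\K e_1)=0$, then gives the derivation identity, so your coefficient-level detour through the three relations and the symplectic algebra is a valid cross-check but not needed.
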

\begin{proof}
In fact $\delta_\mu\varphi_2(e_1,e_i,e_j)=0$ is equivalent to $\mu(f(e_i),e_j)+\mu(e_i,f(e_j))=0$.
 Then for $i,j \geq 1$, $\mu(\tilde{f}(e_i),e_j)+\mu(e_i,\tilde{f}(e_j))=0.$ 
By hypothesis $\tilde{f}e_1)=0.$ This gives $\tilde{f}(\mu(e_i,e_j))=0$ and $\tilde{f}$ is a derivation of the Heisenberg algebra which is trivial on the center. 
\end{proof}
This remark also makes it possible to find the $F$ matrix of $f$ 

\noindent{\textbf{Particular case}: $\delta_\mu\varphi_2=0$ }
Assume that $\delta_\mu\varphi_2=0$. This implies
\begin{proposition}\label{fi2}
If $p\geq 2$, then $\delta_\mu\varphi_2=0$ is equivalent to $\varphi_2=0$.
\end{proposition}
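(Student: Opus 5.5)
We need to show: if $p\ge 2$ and $\delta_\mu\varphi_2=0$ on all triples, then $f=0$, i.e. $\varphi_2=0$. The only components of $\varphi_2$ are $\varphi_2(e_1,e_i)=f(e_i)$ for $i\ge 2$, with $f(e_i)\in V_2=\K\{e_2,\dots,e_{2p+1}\}$. The triples $(e_1,e_i,e_j)$ have already been used: they give the block structure of $F$, equivalently that $\tilde f$ is a derivation of $\mu$ (previous Proposition). So the new information comes from the triples $(e_i,e_j,e_k)$ with $i,j,k\ge 2$, and the plan is to compute $\delta_\mu\varphi_2$ on these.

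Write $\delta_\mu\varphi_2=\mu\circ\varphi_2+\varphi_2\circ\mu$. First, $\varphi_2(e_i,e_j)=0$ for $i,j\ge2$, and $\varphi_2(\cdot,\cdot)$ takes values in $V_2$. Since $\mu(V_2,V_2)\subset \K e_1$ and $\mu(e_1,\cdot)=0$, the term $\mu\circ\varphi_2(e_i,e_j,e_k)=\sum_{\rm cyc}\mu(\varphi_2(e_i,e_j),e_k)$ vanishes. The remaining term is
$$\varphi_2\circ\mu(e_i,e_j,e_k)=\sum_{\rm cyc}\varphi_2(\mu(e_i,e_j),e_k)=\omega_1([e_i,e_j]_\mu)f(e_k)+\omega_1([e_j,e_k]_\mu)f(e_i)+\omega_1([e_k,e_i]_\mu)f(e_j),$$
where $\omega_1([e_i,e_j]_\mu)=\pm1$ exactly when $\{i,j\}=\{2l,2l+1\}$, and is $0$ otherwise. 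Now take a Darboux pair $(e_{2l},e_{2l+1})$ together with any $k\ge2$ with $k\notin\{2l,2l+1\}$. The last two brackets vanish, because $e_k$ is not paired with $e_{2l}$ or $e_{2l+1}$. Hence
$$0=\delta_\mu\varphi_2(e_{2l},e_{2l+1},e_k)=\pm f(e_k).$$

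This is where $p\ge2$ is used: for every $k\ge2$ there is a Darboux pair $\{2l,2l+1\}$ not containing $k$, since there are at least two pairs. Therefore $f(e_k)=0$ for all $k\ge 2$, so $\varphi_2=0$. The converse direction is trivial.

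The only delicate point is the sign and index bookkeeping in the cyclic sum; no real obstacle is expected. It is worth remarking that for $p=1$ no third index exists, which is consistent with the $3$-dimensional example, where $\delta_\mu\varphi_2=0$ only forces $C_{13}^3=-C_{12}^2$.
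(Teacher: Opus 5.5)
Your proof is correct and is essentially the paper's argument: you evaluate $\delta_\mu\varphi_2$ on $(e_{2l},e_{2l+1},e_k)$ with $k\notin\{2l,2l+1\}$ to get $\pm\varphi_2(e_1,e_k)=\pm f(e_k)$, and you use $p\geq 2$ to guarantee that such a Darboux pair exists for every $k\geq 2$. You also spell out why the other terms of the cyclic sum vanish and where $p\geq 2$ enters, which the paper compresses into the phrase ``as soon as the vectors $e_{2i},e_{2i+1},e_k$ are linearly independent.''
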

\begin{proof}
We have
$$\delta_\mu\varphi_2(e_{2i},e_{2i+1},e_k)=\varphi_2(e_1,e_k)$$
as soon as the vectors $e_{2i},e_{2i+1},e_k$ are linearly independent. Since $\delta_\mu\varphi_2=0$, then $\varphi_2(e_1,e_k)=f(e_k)0$ for any $k >1$ and $\varphi_2=0$.
\end{proof}
If $\delta_\mu\varphi_2=0$, Equation (\ref{EQ2}) implies $\varphi_1 \circ \varphi_1=0$ and $\g=(V,\varphi_1,\varphi_2)=(V,\varphi_1)$ is a compatible Lie algebra. Since $\varphi_1(e_1,x)=0$ for any $x \in V$, then $\g$ is a direct product $\K{e_1} \times \g_{2p}$ where $\g_ {2p}$ is a $2p$-dimensional Lie algebra on $\K\{e_2,\cdots,e_{2p+1}\}$ whose Lie bracket is given by $\varphi_1$. 

\begin{proposition} Assume that $\delta_\mu\varphi_2=0$. Then $\g$ is a compatible contact Lie algebra $\g=(V,\mu,\varphi_1)$ which is a one dimensional central extension $\g=\K\{e_1\} \oplus \g_{2p}$ of a $2p$-dimensional symplectic Lie algebra.
\end{proposition}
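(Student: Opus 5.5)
The plan is to start from Proposition \ref{fi2}: since $p\geq 2$, the hypothesis $\delta_\mu\varphi_2=0$ forces $\varphi_2=0$. Equation (\ref{EQ3}) then reduces to $\varphi_1\circ\varphi_1=0$, so $\varphi_1$ is itself a Lie bracket. Together with (\ref{EQ2}), $\delta_\mu\varphi_1=0$, the triple $(V,\mu,\varphi_1)$ satisfies the axioms of a compatible Lie algebra, and $\g$ corresponds to the linear deformation $\mu+t\varphi_1$ (with $t=1$). The bracket of $\g$ in the Darboux basis is then $[X,Y]=\mu(X,Y)+\varphi_1(X,Y)$.

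Next I will read off the structure of this bracket. By construction $\varphi_1(e_1,\cdot)=0$ and $\varphi_1(e_i,e_j)\in V_2=\K\{e_2,\dots,e_{2p+1}\}$ for $i,j\geq 2$. Also $\mu(e_1,\cdot)=0$, with $\mu(e_{2i},e_{2i+1})=e_1$. Hence $e_1$ is central in $\g$. Writing $\g_{2p}=(V_2,\varphi_1)$, the bracket of $\g$ takes the form
$$[X,Y]=\varphi_1(X,Y)+\theta(X,Y)\,e_1,\qquad X,Y\in V_2,$$
where $\theta=\om_2\we\om_3+\cdots+\om_{2p}\we\om_{2p+1}$ restricted to $V_2$. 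Since $\varphi_1\circ\varphi_1=0$ holds on $V$, $\g_{2p}$ is a Lie algebra. Then $\g$ is the central extension of $\g_{2p}$ by $\K e_1$ defined by the $2$-form $\theta$.

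The key step is to show that $\theta$ is a nondegenerate $2$-cocycle of $\g_{2p}$ with trivial coefficients, which makes $(\g_{2p},\theta)$ a symplectic Lie algebra. Nondegeneracy is immediate from the Darboux form. For the cocycle condition I will use (\ref{EQ2}): evaluate $\delta_\mu\varphi_1(X,Y,Z)$ for $X,Y,Z\in V_2$. The terms $\mu(\varphi_1(X,Y),Z)$ equal $\theta(\varphi_1(X,Y),Z)e_1$, while the terms $\varphi_1(\mu(X,Y),Z)$ vanish because $\mu(X,Y)\in\K e_1$ and $\varphi_1(e_1,\cdot)=0$. So (\ref{EQ2}) becomes
$$\theta(\varphi_1(X,Y),Z)+\theta(\varphi_1(Y,Z),X)+\theta(\varphi_1(Z,X),Y)=0,$$
which is exactly $d\theta=0$ for $\g_{2p}$. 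Alternatively, $d(d\om_1)=0$ in $\g$ gives the same identity, since $d\om_1=\theta$.

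The only subtle point is the sign and convention matching between $\delta_\mu$ and the Chevalley–Eilenberg differential with trivial coefficients, and checking that the components of $\delta_\mu\varphi_1$ along $e_1$ and along $V_2$ separate as claimed. I expect this to be routine. Note that the decomposition $\K\{e_1\}\oplus\g_{2p}$ is as vector spaces: the extension is central and nontrivial, with cocycle $\theta$. It is not a direct product of Lie algebras, since $\om_1$ must stay a contact form.
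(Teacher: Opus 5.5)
Your proposal is correct and follows the paper's argument. Proposition \ref{fi2} (which needs $p\geq 2$) gives $\varphi_2=0$. Equation (\ref{EQ3}) then makes $\varphi_1$ a Lie bracket vanishing on $e_1$. Equation (\ref{EQ2}), restricted to triples in $V_2$, is exactly the closedness of $\theta$ on $\g_{2p}$, which the paper asserts in one line.

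You also make explicit two points the paper leaves implicit. First, $\varphi_2=0$ is what makes $e_1$ central, and this fails for $p=1$. Second, $\K\{e_1\}\oplus\g_{2p}$ is a direct product only for the bracket $\varphi_1$; for the full bracket $\mu+\varphi_1$ it is a nontrivial central extension.
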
 
\begin{proof}
In fact, the condition  $\delta_\mu\varphi_1=0$ is equivalent to say that the exterior $2$-form
 $$\theta=\om_2 \we \om_3 +\cdots + \om_{2p} \we \om_{2p+1}$$
 is a symplectic form on $\g_{2p}$.
\end{proof}

\medskip

\noindent \textbf{Remark: Frobeniusian extension.} The contact form on $\g$ is associated with a central extension of a symplectic sub-algebra if and only if $\K\{e_1\}$ is an ideal of $\g$ where $e_1$ is the Reeb vector of the contact form.   When the symplectic form is exact, there is a different type of symplectic Lie algebra extension \cite{RV}. Our focus is on a Frobeniusian Lie algebra with dimension $2p$ and a derivation $D$ of this algebra. 
\begin{proposition}
Let $\h$ be a $2p$-dimensional frobeniusian Lie algebra ($p \geq 2$) and $D$ a not inner derivation of $\h$. Then  the extension $\g$ of $\h$ by the derivation $D$ which is is defined by $\g=\h \oplus \K Z$ with $[Z,X]=D(X)$ for any $X \in \h$ is a contact Lie algebra.
\end{proposition}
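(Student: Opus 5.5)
The plan is to write down an explicit contact form on $\g=\h\oplus\K Z$ built from a Frobenius form of $\h$, and to check $\om\we (d\om)^p\neq 0$ by a direct exterior-algebra computation.

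Since $\h$ is Frobeniusian, there is $\alpha\in\h^*$ such that $d_\h\alpha(X,Y)=-\alpha[X,Y]$ is nondegenerate on $\h$, that is $(d_\h\alpha)^p\neq 0$ in $\Lambda^{2p}\h^*$. Extend $\alpha$ to $\g$ by $\alpha(Z)=0$, and let $Z^*\in\g^*$ be defined by $Z^*(Z)=1$, $Z^*(\h)=0$. I will look for the contact form in the family
$$\om=\alpha+\lambda Z^*,\qquad \lambda\in\K .$$

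Next I compute $d\om$ on $\g$. For $X,Y\in\h$ we have $[X,Y]\in\h$, so $d\om(X,Y)=-\alpha[X,Y]$. For $X\in\h$ we have $[Z,X]=D(X)\in\h$, so $d\om(Z,X)=-\alpha(D X)$. Set $\beta=\alpha\circ D\in\h^*$, extended by $0$ on $Z$. Up to a sign convention,
$$d\om=d_\h\alpha - Z^*\we\beta ,$$
where $d_\h\alpha$ is viewed as a form on $\g$ vanishing on $Z$. In particular $Z^*$ never enters $d\om$ except through the single term $Z^*\we\beta$.

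Now I expand the top form. Since $(Z^*)^2=0$,
$$(d\om)^p=(d_\h\alpha)^p - p\,Z^*\we\beta\we(d_\h\alpha)^{p-1}.$$
The form $(d_\h\alpha)^p$ has degree $2p$ on $\h$ and does not involve $Z^*$. Wedging with $\om=\alpha+\lambda Z^*$ and discarding the forms of degree $2p+1$ that live only on $\h$ (they vanish since $\dim\h=2p$) gives
$$\om\we(d\om)^p=Z^*\we\Big(\lambda\,(d_\h\alpha)^p \pm p\,\alpha\we\beta\we(d_\h\alpha)^{p-1}\Big).$$
Both terms in the bracket lie in the one-dimensional space $\Lambda^{2p}\h^*$. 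The first is nonzero because $\alpha$ is a Frobenius form. Hence the bracket equals $(\lambda-c)(d_\h\alpha)^p$ for some constant $c\in\K$ depending only on $\alpha$ and $D$.

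Choosing any $\lambda\neq c$, which is possible since $\K$ is infinite, gives $\om\we(d\om)^p\neq0$, so $\g$ is a contact Lie algebra.

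The main obstacle is bookkeeping of signs and normalizations in the expansion of $(d\om)^p$ and in the decomposition of $\om\we(d\om)^p$. This bookkeeping is harmless, because the argument only needs that the coefficient of $\lambda$ is the nonzero form $Z^*\we(d_\h\alpha)^p$.

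The computation does not actually use that $D$ is not inner. Indeed, for $D=0$ the same form $\om=\alpha+Z^*$ makes $\h\times\K$ contact. I expect that this hypothesis serves the later discussion, namely to guarantee that the extension is genuinely non-central, rather than the contact property itself.
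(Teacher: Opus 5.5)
Your proof is correct. The paper gives no general argument for this proposition. It only illustrates it on the $4$-dimensional model, where the proposed contact form is the Frobenius form plus the dual of the adjoined vector. It is printed as $\alpha_2+\alpha_5$, which must be a misprint for $\alpha_1+\alpha_5$, since the paper's own structure equations give $d\alpha_2=d\alpha_5=0$.

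In that example $\alpha_1\circ D=0$, so the cross term $p\,Z^*\we\alpha\we\beta\we(d_{\h}\alpha)^{p-1}$ in your expansion vanishes and $\lambda=1$ works. Your free parameter $\lambda$ is what makes the general statement go through, and it is genuinely needed. In the same model, take $D=D_0+\ad Y_1$ with $D_0\neq 0$ the paper's derivation, so $D$ is still not inner. For $\om=\alpha_1+\lambda\alpha_5$ one finds $d\om=-(\alpha_1+\alpha_5)\we\alpha_2-\alpha_3\we\alpha_4$ and $\om\we(d\om)^2=2(1-\lambda)\,\alpha_1\we\alpha_5\we\alpha_2\we\alpha_3\we\alpha_4$. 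So $\lambda=1$ is exactly the excluded value.

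Your constant $c$ also has a clean meaning. By nondegeneracy of $d_{\h}\alpha$ there is a unique $B\in\h$ with $\alpha\circ D=\alpha\circ\ad B$. Then $d\om$, which does not depend on $\lambda$, has kernel $\K(Z-B)$, and $\om$ is contact iff $\om(Z-B)=\lambda-\alpha(B)\neq 0$; that is, $c=\alpha(B)$. Equivalently, replacing $Z$ by $Z-B$ removes the cross term and makes $Z-B$ a Reeb direction. That is a slightly more conceptual route, while yours needs no choice of $B$.

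Your side remarks are also right. The hypothesis that $D$ is not inner is never used: for $D=\ad A$ the vector $Z-A$ is central and $\g\cong\h\times\K$, which is still contact. The hypothesis $p\geq 2$ is not used either. Since only one value of $\lambda$ is excluded, no assumption on $\K$ is needed at the last step.
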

 For example, if $\h$ is the $4$-dimensional frobeniusian model, that is given by
$$[Y_1,Y_2]=[Y_3,Y_4]=Y_1, \ [Y_2,Y_i]=-1/2Y_i,\ i=3,4$$
Let $D$ be the derivation given by 
$$D(Y_3)=a_1Y_3+a_2Y_4, \ D(Y_4)=a_3Y_3-a_1Y_4$$
It defines the extension $\g=\h \oplus \K\{Y_5\}$:
$$[Y_1,Y_2]=[Y_3,Y_4]=Y_1, \ [Y_2,Y_i]=-1/2Y_i,\ i=3,4, [Y_5,Y_3]=a_1Y_3+a_2Y_4, \ [Y_5,Y_4]=a_3Y_3-a_1Y_4.$$
The structural constants of $\g$ are
$$
\left\{
\begin{array}{l}
     d\alpha_1=\alpha_1 \we \alpha_2+ \alpha_3 \we \alpha_4    \\
     d\alpha_2=0\\
      d\alpha_3=-\frac{1}{2} \alpha_2 \we \alpha_3 -\alpha_1 \we (a_1 \alpha_3 +a_3 \alpha_4)\\
 d\alpha_4=-\frac{1}{2} \alpha_2 \we \alpha_4 +\alpha_1 \we (a_2 \alpha_4 -a \alpha_5)\\
 d\alpha_5=0
     \end{array} 
\right.
$$
The linear form $\om=\alpha_2+\alpha_5)$ is a contact form on $\g$. Let us note that, in this case , $f=D$ and $\g$ admits a $4$-dimensional ideal. 

Conversely, let $\g$ be a $2p+1$-dimensional contact Lie algebra admitting a $2p$-dimensional ideal $I$. If $e_1 \notin I$ and if $f$ is a derivation of $I$, then $\g$ is an extension of the frobeniusian algebra $I$ by the derivation $f$. 

\subsubsection{Description of $\varphi_1$}

Let us now study the relation,
$$ \delta_{\varphi_2} \varphi_1=\varphi_1 \circ \varphi_2+\varphi_2 \circ \varphi_1=0.$$
Recall that $\varphi_2(e_1,e_i)=f(e_i)$ and $\varphi_2(e_i,e_j)=0, \ i,j \neq 1.$ This gives
$$ \delta_{\varphi_2} \varphi_1(e_i,e_j,e_k)=0, \ i,j,k \neq 1$$
and 
$$ \delta_{\varphi_2} \varphi_1(e_1,e_i,e_j)=\varphi_2(\varphi_1(e_i,e_j),e_1)+\varphi_1(\varphi_2(e_1,e_i),e_j)+\varphi_1(\varphi_2(e_j,e_1),e_i)$$
that is
$$\delta_{\varphi_2} \varphi_1(e_1,e_i,e_j)=-f(\varphi_1(e_i,e_j))+\varphi_1(f(e_i),e_j)+\varphi_1(e_i,f(e_j)).$$
\begin{proposition}
The relation $ \delta_{\varphi_2} \varphi_1=0$ is equivalent to $f \in \mathcal{D}er (\varphi_1).$
\end{proposition}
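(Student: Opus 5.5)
Since $\delta_{\varphi_2}\varphi_1$ is trilinear and skew-symmetric, it vanishes if and only if it vanishes on all triples of basis vectors of the Darboux basis $\{e_1,\dots,e_{2p+1}\}$. I would split these triples into two types: those not containing $e_1$, and those of the form $(e_1,e_i,e_j)$ with $i,j\geq 2$. By skew-symmetry, triples with $e_1$ repeated give zero.

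\emph{Triples without $e_1$.} I would check that $\delta_{\varphi_2}\varphi_1(e_i,e_j,e_k)=0$ holds identically for $i,j,k\geq 2$. Each term of $\varphi_1\circ\varphi_2$ contains a factor $\varphi_2(e_a,e_b)$ with $a,b\geq 2$, which is zero. Each term of $\varphi_2\circ\varphi_1$ has the form $\varphi_2(\varphi_1(e_a,e_b),e_c)$. Here $\varphi_1(e_a,e_b)\in V_2=\K\{e_2,\dots,e_{2p+1}\}$ and $e_c\in V_2$, and $\varphi_2$ vanishes on $V_2\times V_2$. So these triples impose no condition.

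\emph{Triples containing $e_1$.} The computation displayed just before the statement gives
$$\delta_{\varphi_2}\varphi_1(e_1,e_i,e_j)=-f(\varphi_1(e_i,e_j))+\varphi_1(f(e_i),e_j)+\varphi_1(e_i,f(e_j)).$$
To confirm it, I would expand the six terms of $\varphi_1\circ\varphi_2+\varphi_2\circ\varphi_1$. Those involving $\varphi_1(e_1,\cdot)$ or $\varphi_1(\cdot,e_1)$ vanish, because $\varphi_1$ is zero on $e_1$. Those involving $\varphi_2$ on $V_2\times V_2$ vanish as well. In the remaining terms I use $\varphi_2(e_1,x)=f(x)$ for $x\in V_2$, which holds since $\varphi_1(e_i,e_j)\in V_2$. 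This sign bookkeeping with the cyclic composition product is the only step needing care, and it is routine.

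Therefore $\delta_{\varphi_2}\varphi_1=0$ holds if and only if
$$f(\varphi_1(e_i,e_j))=\varphi_1(f(e_i),e_j)+\varphi_1(e_i,f(e_j))$$
for all $i,j\geq 2$. By bilinearity this is exactly the statement that $f$ is a derivation of the bracket $\varphi_1$ restricted to $V_2$, i.e. $f\in\mathcal{D}er(\varphi_1)$ on $\g_{2p}$. If one wants $f$ as an endomorphism of all of $V$, one uses $\tilde f$ with $\tilde f(e_1)=0$. Since $\varphi_1(e_1,\cdot)=0$ and $\varphi_1$ takes values in $V_2$, the derivation identity also holds trivially whenever an argument is $e_1$. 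Both directions follow from this equivalence.
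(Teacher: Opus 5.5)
Your proof is correct and is essentially the paper's argument: the paper also notes that $\delta_{\varphi_2}\varphi_1$ vanishes identically on triples from $V_2$, and computes $\delta_{\varphi_2}\varphi_1(e_1,e_i,e_j)=-f(\varphi_1(e_i,e_j))+\varphi_1(f(e_i),e_j)+\varphi_1(e_i,f(e_j))$, which is exactly the derivation identity. Your additional points (using alternation for triples with $e_1$ repeated, and extending $f$ by $\tilde f(e_1)=0$) only spell out details the paper leaves implicit.
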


\subsection{The algebra $(V,\varphi_1)$}
\subsubsection{On the structure of this algebra }

Recall that $\varphi_1(e_1,x)=0$ for any $x \in V$. We have
$$\delta_\mu\varphi_2(e_{2i},e_{2i+1},e_k)=\varphi_2(e_1,e_k).$$
We deduce (Equation (\ref{EQ2}) that for any $k$, 
$$\varphi_1 \circ \varphi_1(e_{2i},e_{2i+1},e_k)=-f(e_k).$$
So, we obtain
$$\varphi_1(\varphi_1 \circ \varphi_1(e_{2i},e_{2i+1},e_k),e_l)=-\varphi_1(f(e_k),e_l).$$
We have also
$$f(\varphi_1(e_k,e_l))=-\varphi_1 \circ \varphi_1(e_{2i},e_{2i+1},\varphi_1(e_k,e_l)).$$
Since $f \in \mathcal{D}er(\varphi_1)$, 
$$
-\varphi_1 \circ \varphi_1(e_{2i},e_{2i+1},\varphi_1(e_k,e_l))+\varphi_1(\varphi_1 \circ \varphi_1(e_{2i},e_{2i+1},e_k),e_l)-\varphi_1(\varphi_1 \circ \varphi_1(e_{2i},e_{2i+1},e_l),e_k)=0.$$
Let’s notice that if in the triplet $(i,j,k)$ there are no consecutive integers, then $\delta_\mu \varphi_2 (e_i,e_j,e_k)=0$ implying $\varphi_1 \circ \varphi_1(e_i,e_j,e_k)=0.$
We deduce
\begin{proposition}
Let $\g=(V,\mu,\varphi_1,\varphi_2)$ be a $2$-compatible Lie algebra associated with a quadratic deformation of $\mathcal{H}_{2p+1}$. Then $\varphi_1$ is a multiplication on $V$ satisfying the relation of degree $3$:
$$
-\varphi_1 \circ \varphi_1(x,y,\varphi_1(z,v))+\varphi_1(\varphi_1 \circ \varphi_1(x,y,z),v)-\varphi_1(\varphi_1 \circ \varphi_1(x,y,v),z)=0.$$
$x,y,z,v \in V_2.$
\end{proposition}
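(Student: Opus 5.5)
The plan is to make $\varphi_1\circ\varphi_1$ explicit on $V_2$ by means of Equation (\ref{EQ3}), and then to use the two structural facts already established: $f\in\mathcal{D}er(\varphi_1)$ (equivalent to $\delta_{\varphi_2}\varphi_1=0$) and $\tilde f\in\mathcal{D}er(\mu)$. Write $\theta=\om_2\we\om_3+\cdots+\om_{2p}\we\om_{2p+1}$, so that $\mu(x,y)=\theta(x,y)e_1$ for $x,y\in V_2$, up to the sign convention fixed by $d\om_1$. For $x,y,z\in V_2$ we have $\varphi_2(V_2,V_2)=0$ and $\varphi_2(e_1,z)=f(z)$. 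Hence the only surviving terms of $\delta_\mu\varphi_2(x,y,z)$ are those in which $\mu$ is applied first, which gives
$$\delta_\mu\varphi_2(x,y,z)=\theta(x,y)f(z)+\theta(y,z)f(x)+\theta(z,x)f(y).$$
By (\ref{EQ3}) this yields the closed formula $\varphi_1\circ\varphi_1(x,y,z)=-\bigl(\theta(x,y)f(z)+\theta(y,z)f(x)+\theta(z,x)f(y)\bigr)$. This generalizes the identity $\varphi_1\circ\varphi_1(e_{2i},e_{2i+1},e_k)=-f(e_k)$ used just before the statement.

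Next I substitute this formula into each of the three terms of the degree-$3$ relation and sort the result by the $\theta$-coefficients. The terms carrying $\theta(x,y)$ collect into
$$\theta(x,y)\bigl(f(\varphi_1(z,v))-\varphi_1(f(z),v)-\varphi_1(z,f(v))\bigr),$$
which vanishes because $f\in\mathcal{D}er(\varphi_1)$. This is exactly the computation displayed before the statement, where $x=e_{2i}$, $y=e_{2i+1}$ and $\theta(x,y)=1$.

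The main obstacle is the remaining terms, namely
$$\theta(y,\varphi_1(z,v))f(x)+\theta(\varphi_1(z,v),x)f(y)-\theta(y,z)\varphi_1(f(x),v)-\theta(z,x)\varphi_1(f(y),v)+\theta(y,v)\varphi_1(f(x),z)+\theta(v,x)\varphi_1(f(y),z).$$
My first attempt will be to use the two available identities to rewrite these. The first is the cocycle condition $\delta_\mu\varphi_1=0$, which says that $\theta$ is a $2$-cocycle of $(V_2,\varphi_1)$, i.e. $\theta(\varphi_1(x,y),z)+\theta(\varphi_1(y,z),x)+\theta(\varphi_1(z,x),y)=0$. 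The second is the skew-adjointness $\theta(fx,y)+\theta(x,fy)=0$ coming from $\tilde f\in\mathcal{D}er(\mu)$. I would try to show that these terms cancel in pairs after pairing against $\theta$, using that $\theta$ is nondegenerate on $V_2$.

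If this cancellation fails in general, I will fall back on the setting in which the paper's argument actually lives. There, $x=e_{2i}$ and $y=e_{2i+1}$ form a Darboux pair, and $z,v$ are basis vectors whose indices are not consecutive with $2i,2i+1$. In that case every $\theta$-coefficient except $\theta(x,y)=1$ vanishes. The two terms $\theta(y,\varphi_1(z,v))f(x)$ and $\theta(\varphi_1(z,v),x)f(y)$ still survive, since they depend on the component of $\varphi_1(z,v)$ along $e_{2i},e_{2i+1}$. To handle them I would use the remark that $\varphi_1\circ\varphi_1(e_i,e_j,e_k)=0$ for triples without consecutive indices, together with the cocycle condition, to control that component. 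The proof would then be completed by multilinearity over such basis quadruples.
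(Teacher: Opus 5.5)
\textbf{There is a genuine gap, and it cannot be closed: the relation is false as stated.}

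Your first two steps are correct, and they are exactly the paper's argument. The closed formula $\varphi_1\circ\varphi_1(x,y,z)=-\bigl(\theta(x,y)f(z)+\theta(y,z)f(x)+\theta(z,x)f(y)\bigr)$ follows from (\ref{EQ3}). The $\theta(x,y)$-terms then cancel because $f\in\mathcal{D}er(\varphi_1)$.

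The paper goes no further, and drops the remaining terms in two places:
\begin{enumerate}
\item It replaces $f(\varphi_1(e_k,e_l))$ by $-\varphi_1\circ\varphi_1(e_{2i},e_{2i+1},\varphi_1(e_k,e_l))$. This is legitimate only when $\varphi_1(e_k,e_l)$ has no component on $e_{2i},e_{2i+1}$.
\item It handles the other quadruples with the ``no consecutive indices'' remark. That remark does not apply when the third argument is the Darboux partner of $x$ or $y$.
\end{enumerate}
So the remainder you isolated is precisely what the paper silently discards. It does not vanish, so no use of the cocycle identity and the skew-adjointness of $f$ can make it cancel. Take the paper's own $5$-dimensional example. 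On $V_2$ it has $\varphi_1(e_2,e_4)=e_2$, $\varphi_1(e_3,e_4)=-e_3$, $\varphi_1(e_4,e_5)=e_4$, and all other $\varphi_1(e_i,e_j)=0$. Then $\varphi_1\circ\varphi_1(e_2,e_5,e_4)=\varphi_1(\varphi_1(e_5,e_4),e_2)=e_2$. Hence for $(x,y,z,v)=(e_2,e_5,e_4,e_5)$ the left-hand side is $-e_2+\varphi_1(e_2,e_5)-0=-e_2\neq 0$. The relation involves only $\varphi_1$, so the normalization of $e_1$ plays no role.

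Your fallback does not rescue the argument either. For $x=e_{2i}$, $y=e_{2i+1}$ and $z,v$ in the other Darboux planes, your two surviving terms are exactly $-f(\pi_i\varphi_1(z,v))$. Here $\pi_i$ is the projection onto $\K\{e_{2i},e_{2i+1}\}$, and nothing forces this to vanish. In the same algebra, take the Darboux basis $e'_2=e_2+e_4$, $e'_3=e_3$, $e'_4=e_4$, $e'_5=e_5-e_3$. It has the same $e_1$ and the same $V_2$, hence the same $\varphi_1$ and $f$. Then:
\begin{enumerate}
\item $\varphi_1(e'_4,e'_5)=e'_4-e'_3$;
\item $\varphi_1\circ\varphi_1(e'_2,e'_3,\cdot)$ kills $e_2,e_3,e_4$ and sends $e_5$ to $-e_3$;
\item the left-hand side at $(e'_2,e'_3,e'_4,e'_5)$ is $\varphi_1(e_3,e_4)=-e_3=-f(\pi_1(e'_4-e'_3))\neq 0$.
\end{enumerate}

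Even a valid fallback would not finish the proof. Multilinearity only passes from basis quadruples to arbitrary ones if the identity holds on \emph{all} basis quadruples, including those where $(x,y)$ is not a Darboux pair. That is exactly where the first counterexample lives.

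What your computation does prove is the relation under an extra hypothesis: $z$, $v$ and $\varphi_1(z,v)$ are $\theta$-orthogonal to both $x$ and $y$. Then every term except the $\theta(x,y)$-one vanishes. The proposition has to be weakened to something of this kind.
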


We can define another structure on this algebra. Let us assume that
$\varphi_1\neq 0$ and $\varphi_2 \neq 0$. The relation between these two bilinear maps is
$$\label{fi}
\varphi_1 \circ \varphi _1= -\delta_{\mu_0}\varphi_2.$$ We can however interpret the above relation as a generalization of the Jacobi relation. Let us recall that generalizations have already been made, for example the notion of type Lie algebra \cite{Mak}, or the notion of Hom-Lie algebra.  Let us recall that $H^*(\mu,\mu)$ denotes the Chevalley-Eilenberg complex of the Lie algebra whose Lie bracket is $\mu$. Let us consider the Schur multiplier 
$$M(\mu_0) = Z^2(\mu_0,\mu_0)/B^2(\mu_0,\mu_0)$$
associated with the Chevalley-Eilenberg complex of the Heisenberg algebra $(V,\mu_0)$ (see \cite{Ba} for a description of this notion). Since $\varphi_1(e_1,X)=0$ for any $X \in V$, we can consider that $\varphi_1 \in M(\mu_0)$ and $\varphi_1 \neq 0$ in this space. Here we denote by the same symbol the cohomology class of a cocycle and the cocycle itself.
\begin{definition}
Let $\g_0=(\K^n,\mu_0)$ be a finite dimension $\K$-Lie algebra. Let  $\varphi$ be a skew-symmetric bilinear map on $\K^n$. We say the the couple $\g_1=(\K^n,\varphi)$ is a Lie algebra modulo $\mu_0$ if\begin{enumerate}
  \item $\varphi \in M(\mu_0)$
  \item $\varphi \circ \varphi =0$ mod $B^3(\mu_0,\mu_0)$
\end{enumerate}
\end{definition}
For example, if $\g=(\K^n,\mu_0)$ is the abelian Lie algebra, then $\g_1=(\K^n,\varphi)$ is a Lie algebra modulo $\mu_0$ iff $\g_1$ is a Lie algebra. 
\begin{proposition}
Let $\g=(\K^{2p+1},\mu_0,\varphi_1,\varphi_2)$ a contact Lie algebra. Then
\begin{enumerate}
  \item $\varphi_1 \in M(\mu_0)$
  \item $\g_1=(\K^{2p+1},\varphi_1)$ is a Lie algebra modulo $\mu_0$ the Lie bracket of the Heisenberg Lie algebra.
\end{enumerate}
\end{proposition}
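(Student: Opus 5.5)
The plan is to read both assertions directly off the defining system (\ref{EQ2})--(\ref{EQ5}) of the quadratic deformation $\mu_0+t\varphi_1+t^2\varphi_2$ of $\mathcal{H}_{2p+1}$. By the result of \cite{G.R.contact} recalled above, we may fix a Darboux basis in which the contact Lie algebra is $\g=(V,\mu_0,\varphi_1,\varphi_2)$, with $\mu_0$ the Heisenberg bracket, $\varphi_1$ supported on $V_2\wedge V_2$ with values in $V_2$, and $\varphi_2(e_1,e_i)=f(e_i)$. Nothing beyond these relations should be needed.

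For (1), Equation (\ref{EQ2}) gives $\delta_{\mu_0}\varphi_1=0$, so $\varphi_1\in Z^2(\mu_0,\mu_0)$ and it defines a class in $M(\mu_0)=Z^2/B^2$. To follow the paper's remark that this class is nonzero when $\varphi_1\neq 0$, I would check that $\varphi_1\notin B^2(\mu_0,\mu_0)$. A coboundary $\delta_{\mu_0}g$, for $g\in \mathrm{End}(V)$, is
$$\delta_{\mu_0}g(x,y)=\mu_0(gx,y)+\mu_0(x,gy)-g\mu_0(x,y).$$
I would compare this with $\varphi_1$ on the pairs $(e_1,e_i)$ and $(e_{2i},e_{2i+1})$. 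Since $\varphi_1(e_1,\cdot)=0$, we get $\mu_0(ge_1,e_i)=ge_1$-contributions equal to zero for all $i$, so $ge_1\in\K e_1$. Evaluating on $(e_{2i},e_{2i+1})$, the $e_1$-component must vanish, while the $g e_1$ term appears outside $V_2$ whenever $ge_1\neq 0$. These constraints should force $\delta_{\mu_0}g$ to take values in $\K e_1$ on $V_2\wedge V_2$. As $\varphi_1$ takes values in $V_2$, equality would then force $\varphi_1=0$. This is a short linear check, and it is the only point where the argument needs some care.

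For (2), Equation (\ref{EQ3}) reads $\varphi_1\circ\varphi_1=-\delta_{\mu_0}\varphi_2$. Since $\varphi_2$ is a skew bilinear map, hence a $2$-cochain, we have $\delta_{\mu_0}\varphi_2\in B^3(\mu_0,\mu_0)$. Therefore $\varphi_1\circ\varphi_1\equiv 0$ modulo $B^3(\mu_0,\mu_0)$. Combined with (1), this is exactly the definition of a Lie algebra modulo $\mu_0$.

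I expect no real obstacle; the content is essentially a rereading of (\ref{EQ2}) and (\ref{EQ3}). The only subtle point is the nonvanishing of the class in (1), which I would settle by the coboundary computation above in the Darboux basis.
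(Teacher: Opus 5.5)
Your proof is correct and follows the paper's argument: (1) is just the cocycle condition $\delta_{\mu_0}\varphi_1=0$ from (\ref{EQ2}), and (2) follows from (\ref{EQ3}), since $\varphi_1\circ\varphi_1=-\delta_{\mu_0}\varphi_2\in B^3(\mu_0,\mu_0)$. Your extra check that the class of $\varphi_1$ is nonzero is not required by the statement but is valid, and it is simpler than you present it: once $\varphi_1(e_1,\cdot)=0$ forces $g e_1\in\K e_1$, every value of $\delta_{\mu_0}g$ lies in $\K e_1$, which meets $V_2$ only in $0$.
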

In fact, the relation (\ref{fi}) shows that $\varphi_1 \circ \varphi _1 \in B^3(\mu_0,\mu_0)$ then its class in $H^3(\mu_0,\mu_0)$ is trivial.

\subsection{On the derivation $f$ of the algebra $(V,\varphi_1)$}
\subsubsection{The Jordan-Chevalley decomposition} We assume now that $\K$ is algebraically closed. 
We have seen that the condition $\delta_{\varphi_2}\varphi_1=0$ was equivalent to $f \in \mathcal{D}er(\varphi_1)$ where $f$ is defined on $V_2=\K\{e_2,\cdots,e_{2p}\}$ by $f(e_i)=\varphi_2(e_1,e_i)$. Let $f=f_s +f_n$ be the Jordan-Chevalley decomposition of the linear operator $f$ on $V_2$. We shall prove that  $f_s$ and $f_n$ are also derivations of the algebra $(V_2,\varphi_1)$. In fact, let us assume that $f$ is not nilpotent. Let  $(\lambda_1, \cdots,\lambda_r)$ the set of eigenvalues of $f$ with $\lambda_i \neq\lambda_j$ when $i \neq j$. If $C_{\lambda_i}$ is the characteristic subspace of $V_2$ associated with $\lambda_i$, that is $C_{\lambda_i} = \ker (f-\lambda_i Id)^{s_i}$ where $s_i$ is the multiplicity of the root $\lambda_i$,  then $V_2=C_{\lambda_1} \oplus \cdots \oplus C_{\lambda_r}.$ 
The subspaces  $C_{\lambda_i}$  are invariant by $f$ and the restriction of $f_s$ on  $C_{\lambda_i}$ (more precisely the induced endomorphism by $f_s$ on $C_{\lambda_i}$) is $\lambda_i Id$. Consider $(X_i,X_j) \in C_{\lambda_i} \times C_{\lambda_j}$. Then
$$f(\varphi_1(X_i,X_j))=\varphi_1(f(X_i),X_j))+\varphi_1(X_i,f(X_j)).$$
This implies
$$
\begin{array}{ll}
 (f-(\lambda_i+\lambda_j)Id)(\varphi_1(X_i,X_j)     &= \varphi_1(f(X_i),X_j))+\varphi_1(X_i,f(X_j))-  (\lambda_i+\lambda_j)\varphi_1(X_i,X_j)  \\
      & =   \varphi_1((f-\lambda_i)(X_i),X_j))+\varphi_1(X_i,(f-\lambda_j)(X_j))\\
\end{array}
$$
By induction, we prove the following lemma
\begin{lemma}
$$ (f-(\lambda_i+\lambda_j)Id)^n(\varphi_1(X_i,X_j) =\sum_{k=0}^n \binom{n}{k}\varphi_1((f-\lambda_iId)^k(X_i),(f-\lambda_jId)^{n-k}(Y_j))$$
\end{lemma}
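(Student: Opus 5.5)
Fix $X_i\in C_{\lambda_i}$, $X_j\in C_{\lambda_j}$ (the $Y_j$ in the statement is a typo for $X_j$), and set $g=f-\lambda_i Id$, $h=f-\lambda_j Id$, $D=f-(\lambda_i+\lambda_j)Id$. The proof is an induction on $n$ that reproduces the binomial (Leibniz) formula for powers of a derivation. The only input is the identity displayed just before the lemma, which holds for arbitrary vectors and not just for $X_i,X_j$:
$$D\bigl(\varphi_1(U,W)\bigr)=\varphi_1(g(U),W)+\varphi_1(U,h(W)),\qquad U,W\in V_2 .$$
This follows from $f\in\mathcal{D}er(\varphi_1)$ by subtracting $(\lambda_i+\lambda_j)\varphi_1(U,W)$ and using bilinearity. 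The point to stress is that it does not require $U\in C_{\lambda_i}$ or $W\in C_{\lambda_j}$. This is what lets us apply it to $U=g^k(X_i)$ and $W=h^{n-k}(X_j)$. Those vectors do stay in the respective characteristic spaces because these are $f$-invariant, but we never need that fact.

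The case $n=0$ is trivial, and $n=1$ is exactly the displayed identity. Assume the formula holds for $n$ and apply $D$ to both sides. Since $D$ is linear, it passes through the sum and the binomial coefficients. Using the identity on each term gives
$$D^{n+1}\varphi_1(X_i,X_j)=\sum_{k=0}^{n}\binom{n}{k}\Bigl[\varphi_1\bigl(g^{k+1}X_i,h^{n-k}X_j\bigr)+\varphi_1\bigl(g^{k}X_i,h^{n+1-k}X_j\bigr)\Bigr].$$
Shift the index in the first group ($k\mapsto k-1$) and combine with the second group using Pascal's rule $\binom{n}{k-1}+\binom{n}{k}=\binom{n+1}{k}$. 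The boundary terms $k=0$ and $k=n+1$ each appear once with coefficient $1=\binom{n+1}{0}=\binom{n+1}{n+1}$. The result is the formula at rank $n+1$.

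Nothing in this argument is a real obstacle; the only care needed is the reindexing and the boundary terms. For later use (showing that $f_s$ and $f_n$ are derivations), it is worth recording a consequence. Take $n\ge s_i+s_j-1$. Then in each term either $k\ge s_i$, so $g^k X_i=0$, or $n-k\ge s_j$, so $h^{n-k}X_j=0$. Hence $\varphi_1(C_{\lambda_i},C_{\lambda_j})\subset C_{\lambda_i+\lambda_j}$, where $C_{\lambda_i+\lambda_j}$ is understood to be $0$ if $\lambda_i+\lambda_j$ is not an eigenvalue. From this, $f_s$ acts on $\varphi_1(X_i,X_j)$ by $(\lambda_i+\lambda_j)$, so $f_s$ is a derivation, and then so is $f_n=f-f_s$.
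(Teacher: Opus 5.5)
Your proof is correct and follows the paper's route: the paper derives the case $n=1$ from $f\in\mathcal{D}er(\varphi_1)$ and then simply says ``by induction,'' and you carry out that induction explicitly with the Leibniz reindexing and Pascal's rule. You are also right that the $n=1$ identity holds for arbitrary $U,W\in V_2$, not only for elements of $C_{\lambda_i}$ and $C_{\lambda_j}$, and that $Y_j$ is a typo for $X_j$.
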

Assume now that $n$ is sufficiently large. Since $(X_i,X_j) \in C_{\lambda_i} \times C_{\lambda_j}$, then $(f-\lambda_iId)^k(X_i)=0$ or  $(f-\lambda_jId)^{n-k}(Y_j))=0$. We deduce
$$\varphi_1(X_i,X_j) \in C_{\lambda_i+\lambda_j}$$
and $\varphi_1(X_i,X_j)$ if $\lambda_i+\lambda_j$ is not in the spectrum of $f$. 
This implies
$$\begin{array}{ll}
    f_s(\varphi_1(X_i,X_j))  &  =(\lambda_i+\lambda_j)\varphi_1(X_i,X_j))  \\
      & =  \lambda_i\varphi_1(X_i,X_j) +\lambda_j\varphi_1(X_i,X_j)\\
      & = \varphi_1(f_s(X_i),X_j)+ \varphi_1(X_i,f_s(X_j))\\
\end{array}
$$ and $f_s$ is a derivation of $\varphi_1$

\begin{proposition}
If $f=f_s+f_n$ is the Jordan-Chevalley decomposition of $f$, then $f$, $f_s$ and $f_n$ are derivations of the algebra $(V,\varphi_1)$.
\end{proposition}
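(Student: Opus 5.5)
The plan is to prove the three claims in turn, relying on the material established just before the statement. The first claim, that $f$ is a derivation, is the earlier Proposition: $\delta_{\varphi_2}\varphi_1=0$ is equivalent to $f\in\mathcal{D}er(\varphi_1)$, and this relation is part of the $2$-compatible structure (Equation (\ref{EQ4})). Throughout, $f$ is read as an endomorphism of $V_2=\K\{e_2,\dots,e_{2p+1}\}$. Since $\varphi_1$ takes values in $V_2$ and vanishes on $e_1$, we may also extend $f$ by $f(e_1)=0$ (this is $\tilde f$). Then both sides of the derivation identity vanish whenever one argument is $e_1$, so the statement holds on $V$ as well as on $V_2$.

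For $f_s$, I would use the decomposition $V_2=C_{\lambda_1}\oplus\cdots\oplus C_{\lambda_r}$ into characteristic subspaces of $f$. The key step is the Lemma: the binomial expansion of $(f-(\lambda_i+\lambda_j)Id)^n(\varphi_1(X_i,X_j))$, obtained by induction on $n$ from the derivation identity. Take $n\geq s_i+s_j$. Then every term of the sum contains either $(f-\lambda_i Id)^k X_i=0$ with $k\ge s_i$, or $(f-\lambda_j Id)^{n-k}X_j=0$ with $n-k\ge s_j$. Hence $\varphi_1(X_i,X_j)\in\ker(f-(\lambda_i+\lambda_j)Id)^n$. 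This kernel is $C_{\lambda_i+\lambda_j}$ if $\lambda_i+\lambda_j$ is an eigenvalue, and $0$ otherwise, because $f-\mu Id$ is invertible when $\mu$ is not in the spectrum. In both cases $f_s(\varphi_1(X_i,X_j))=(\lambda_i+\lambda_j)\varphi_1(X_i,X_j)$, since $f_s$ acts on each $C_\lambda$ as $\lambda\, Id$ and both sides are $0$ in the second case. This equals $\varphi_1(f_sX_i,X_j)+\varphi_1(X_i,f_sX_j)$. The identity is bilinear and $V_2$ is the direct sum of the $C_{\lambda_i}$, so it extends to all of $V_2\times V_2$, and then to $V$ as before. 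If $f$ is nilpotent, then $f_s=0$ and there is nothing to prove.

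Finally, $f_n=f-f_s$ is a difference of two derivations, and $\mathcal{D}er(\varphi_1)$ is a linear subspace of $\mathrm{End}(V)$, so $f_n$ is a derivation. The only real obstacle is the bookkeeping in the Lemma. Its induction step applies the displayed identity $(f-(\lambda_i+\lambda_j))\varphi_1(X,Y)=\varphi_1((f-\lambda_i)X,Y)+\varphi_1(X,(f-\lambda_j)Y)$ to the vectors $(f-\lambda_i)^kX_i$ and $(f-\lambda_j)^{n-k}X_j$. This is legitimate because that identity only uses that $f$ is a derivation, not that $X\in C_{\lambda_i}$ or $Y\in C_{\lambda_j}$. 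Pascal's rule then produces the binomial coefficients. Note that nothing here uses $\K$ algebraically closed beyond the existence of the characteristic decomposition.
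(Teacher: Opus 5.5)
Your proof is correct and follows the paper's own argument. The paper gets $f\in\mathcal{D}er(\varphi_1)$ from $\delta_{\varphi_2}\varphi_1=0$ and uses the binomial Lemma on characteristic subspaces to force $\varphi_1(C_{\lambda_i},C_{\lambda_j})\subset C_{\lambda_i+\lambda_j}$ (or $0$), from which $f_s$ is a derivation; you make explicit the step $f_n=f-f_s\in\mathcal{D}er(\varphi_1)$ that the paper leaves implicit. One small imprecision: for a fixed $n\geq s_i+s_j$ the kernel of $(f-(\lambda_i+\lambda_j)Id)^n$ is only contained in $C_{\lambda_i+\lambda_j}$, with equality once $n$ is also at least the multiplicity of $\lambda_i+\lambda_j$, but containment is all your argument uses.
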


Let  $\{v_2,\cdots,v_{2p},v_{2p+1}\}$ be a basis   of eigenvectors of $f_s$ associated with the eigenvalues $\{\lambda_2,\cdots,\lambda_{2p},\lambda_{2p+1}\}$. Thus we have
$$f_s(\varphi_1(v_i,v_j))=(\lambda_i+\lambda_j)\varphi_1(v_i,v_j).$$
If $\lambda_i+\lambda_j$ is not an eigenvalue, then $\varphi_1(v_i,v_j)=0$ and $\varphi_1(C_{\lambda_i} \times C_{\lambda_j})=0.$ In the other case,
$$f_s(\varphi_1(v_i,(\varphi_1(v_i,v_j)))=(2\lambda_i+\lambda_j)\varphi_1(v_i,v_j)$$
Let us put $ad_{\varphi_1}(X)(Y)=\varphi_1(X,Y)$. This operator on the skewsymmetric nonassociative algebra $(V\varphi_1)$ corresponds to the left translation. The previous identity is written
$$f_s(ad_{\varphi_1}^2(v_i)(v_j))=(2\lambda_i+\lambda_j)\ad_{\varphi_1}^2(v_i)(v_j))$$
ad we have for any $k$
$$f_s(ad_{\varphi_1}^k(v_i)(v_j))=(k\lambda_i+\lambda_j)\ad_{\varphi_1}^k(v_i)(v_j)).$$
This shows that $k\lambda_i+\lambda_j$ is an eigenvalue for any $k$. Since this is impossible, there is $k$ such that  $\ad_{\varphi_1}^k(v_i)=0.$ Then for any $X \in V_2$, there exists $k$ such that 
$$\varphi_1(v_i,\varphi(v_i, \cdots \varphi_1(v_i,X)\cdots ))=0$$
for any eigenvectors $v_i$ that is $ad_{\varphi_1}(v_i)^k=0.$ Similarly, there is $k \in \N$ such as the product of order $k$ $ad_{\varphi_1}(v_{i_1})\circ \cdots ad_{\varphi_1}(v_{i_k})=0.$
Since $\{v_2,\cdots,v_{2p}\}$ is a basis of $V_2$, for any $Y \in V$, there exists $k$ such that $ad_{\varphi_1}(Y)^k=0$ and 
\begin{proposition}
The non associative algebra $(V,\varphi_1)$ is nilpotent.
\end{proposition}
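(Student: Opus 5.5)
The plan is to make the spectral argument above precise by turning it into a grading of $(V_2,\varphi_1)$ by the eigenvalues of $f_s$. Nilpotency then follows from a "weights grow along products" estimate.

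\textbf{Step 1 (grading).} By the previous Proposition, $f_s$ is a semisimple derivation of $(V_2,\varphi_1)$. Hence $V_2=\bigoplus_{\lambda}V_\lambda$, with $V_\lambda=\ker(f_s-\lambda Id)=C_\lambda$, and
$$\varphi_1(V_\lambda,V_\nu)\subset V_{\lambda+\nu},$$
which is zero when $\lambda+\nu\notin\mathrm{Spec}(f)$. Since $\varphi_1(e_1,\cdot)=0$, the summand $\K e_1$ contributes nothing. So it suffices to prove nilpotency of $(V_2,\varphi_1)$, meaning that every product of $N$ elements with any bracketing vanishes for $N$ large.

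\textbf{Step 2 (weight estimate).} Let $\Lambda$ be the $\mathbb{Q}$-span of $\mathrm{Spec}(f)$, and choose a $\mathbb{Q}$-linear form $\ell:\Lambda\to\mathbb{Q}$. A bracketed product of $N$ homogeneous elements of weights $\lambda_{i_1},\dots,\lambda_{i_N}$ lies in $V_{\lambda_{i_1}+\cdots+\lambda_{i_N}}$. If $\ell(\lambda)>0$ for every eigenvalue $\lambda$, then
$$\ell(\lambda_{i_1}+\cdots+\lambda_{i_N})\ \ge\ N\min\ell(\lambda),$$
which exceeds $\max\ell(\lambda)$ for $N$ large. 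The product is then $0$, giving nilpotency of arbitrary bracketings at once, not only of the operators $ad_{\varphi_1}(v_i)^k$. This makes rigorous the informal claim that "$k\lambda_i+\lambda_j$ cannot always be an eigenvalue".

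\textbf{Step 3 (main obstacle).} The main obstacle is the existence of such an $\ell$, that is, showing that the eigenvalues of $f$ lie in an open half-space of $\Lambda\otimes\mathbb{R}$. This fails as soon as $0$ is an eigenvalue, in particular when $f$ is nilpotent, or when $\lambda$ and $-\lambda$ both occur. The symplectic symmetry of $F$ coming from $\tilde f\in\mathcal{D}er(\mu)$ suggests that $\pm\lambda$ pairs may indeed appear.

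I would first deal with the weight-$0$ part and with opposite pairs using the identity
$$\varphi_1\circ\varphi_1(e_{2i},e_{2i+1},e_k)=-f(e_k)$$
together with the degree-$3$ relation of the earlier Proposition. The aim is to show that the components of $\varphi_1$ pairing $V_\lambda$ with $V_{-\lambda}$, or landing in $V_0$, must vanish. Once they vanish, one can pass to the subspace where a positivity functional exists.

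If this fails, the statement should be restricted to the case where $\mathrm{Spec}(f)$ avoids $0$ and admits such an $\ell$, and the argument of Step 2 then applies verbatim.
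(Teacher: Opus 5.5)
There is a genuine gap: Step 2 never applies, Step 3 cannot be completed, and in fact the proposition is false.

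Steps 1 and 2 are correct as a conditional argument. But since $F\in\mathfrak{r}_p$ (equivalently $\tilde f\in\mathcal{D}er(\mu)$), the spectrum is symmetric: $\mathrm{Spec}(f)=-\mathrm{Spec}(f)$. So no $\mathbb{Q}$-linear $\ell$ is positive on all eigenvalues, and your fallback ``restricted statement'' concerns an empty class. Everything therefore rests on Step 3.

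The vanishing you hope for in Step 3 already fails in the paper's own example: $[e_1,e_2]=e_2$, $[e_1,e_3]=-e_3$, $[e_2,e_3]=e_1$, $[e_2,e_4]=e_2$, $[e_3,e_4]=-e_3$, $[e_4,e_5]=e_1+e_4$.
\begin{itemize}
\item Here $f=f_s$ has weights $1,-1,0,0$ on $e_2,e_3,e_4,e_5$.
\item The nonzero products are $\varphi_1(e_2,e_4)=e_2$, $\varphi_1(e_3,e_4)=-e_3$ and $\varphi_1(e_4,e_5)=e_4$, the last one landing in $V_0$.
\item Then $ad_{\varphi_1}(e_4)^k(e_2)=(-1)^ke_2\neq 0$ for every $k$, so $(V,\varphi_1)$ is not nilpotent.
\end{itemize}
Even $f=0$ does not save the claim. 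Take $[e_2,e_3]=e_1+e_3$, $[e_4,e_5]=e_1+e_5$, with $e_1$ central: this is the central extension of $\mathfrak{r}(2)\times\mathfrak{r}(2)$. It is a $5$-dimensional contact Lie algebra in Darboux form with $f=0$, yet $ad_{\varphi_1}(e_2)^k(e_3)=e_3$ for all $k$.

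The identity you planned to use in Step 3 actually works against you.
\begin{itemize}
\item For $p\ge 2$ and $x\in\K\{e_k : k\ge 2,\ k\neq 2i,2i+1\}$, one has $f(x)=-\varphi_1\circ\varphi_1(e_{2i},e_{2i+1},x)$.
\item Using two different pairs covers all of $V_2$, so $f(V_2)\subset V^{(3)}$. Here $V^{(n)}$ denotes the span of all bracketed $\varphi_1$-products of at least $n$ elements.
\item Since $f$ is a derivation of $\varphi_1$, this gives $f(V^{(n)})\subset V^{(n+2)}$, hence $f^k(V_2)\subset V^{(2k+1)}$.
\item So if $(V_2,\varphi_1)$ were nilpotent, $f$ would be nilpotent. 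That fails as soon as $f_s\neq 0$, as in the example above, or in the paper's diagonal algebras where $\varphi_1(e_2,e_{2i})=-\lambda_{2i}e_{2i}$ with $\lambda_{2i}\neq 0$.
\end{itemize}

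The paper's own argument breaks exactly at the obstacle you identified. The claim ``$k\lambda_i+\lambda_j$ is an eigenvalue for every $k$, which is impossible'' is false when $\lambda_i=0$, and weight-zero vectors always occur.

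What your grading argument does prove correctly is weaker:
\begin{itemize}
\item for any $\mathbb{Q}$-linear $\ell$, the subalgebra $\bigoplus_{\ell(\lambda)>0}V_\lambda$ is nilpotent;
\item $ad_{\varphi_1}(v)$ is nilpotent for every $f_s$-eigenvector $v$ with nonzero eigenvalue.
\end{itemize}
The proposition as stated cannot be proved.
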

Let us assume that $f$ is a non nilpotent singular derivation. If $s$ is the order of the null eigenvalue of $f_s$, for any vector $v \in \ker f_s$, we have for any eigenvector $v_i$ associated with the nonnull eigenvalue $\lambda_i$
$$f_s(\varphi_1(v,v_i))=\lambda_i\varphi_1(v,v_i)$$
and $\varphi_1(v,v_i)$ is zero or an eigenvector associated to $\lambda_i$. In this case
$$f_s(ad_{\varphi_1}^k(v)(v_i))=k\lambda_i\ad_{\varphi_1}^k(v)(v_i)$$
for any $k$ implying that there exists $k$ such as $\varphi_1^k(v,v_i)=0$.

 \subsubsection{The derivation $f$ is singular} We assume that $\dim \g \geq 5$. In this case $\g$ admits a non trivial ideals.  
Let $I$ be a non trivial ideal of $\g$ and $\{e_1,\cdots,e_{2p+1}$ a Darboux basis of $\g$. 

i) Assume, in a first time, that $I=\K\{e_1\}$. Then $f(e_i)=\varphi_2(e_1,e_i)=\mu(e_1,e_i)=0$, that is $f=0$. In this case, the contact Lie algebra is a central extension of a $2p$-dimensional symplectic Lie algebra. Conversely, if $f=0$, then $\K\{e_1\}$ is an ideal of $\g$.

\medskip 

ii) More generally, assume that $e_1 \in I$.  Then for any $X \in \g$, we have
$$\mu(e_1,X)=\varphi_2(e_1,X)=f(X) \in I.$$
If the rank of $f$ is $2p$, then $I=\g$. But $I$ is a proper ideal of $\g$. This implies that
$${\rm rk}(f) < 2p$$
and $f$ is a singular endomorphism of $V_2=\K\{e_2,\cdots,e_{2p+1}\}$. .

\medskip
iii)   Now, assume that $e_1 \notin I$. There is $U \neq 0$ such that  $e_1+U \in I$.  We have 
$$\mu(e_1,e_1+U)=\mu(e_1,U)=f(U) \in I.$$ But we have also $\mu(e_1,f(U))=f^2(U) \in I$ and more generally $f^k(U) \in I.$ Then $f(U)=0$ or there exists $k \geq 1$ with $f^k(U)=a_1f(U)+\cdots +a_{k-1}f^{k-1}(U).$ In the latter case, assume that $f$ is nonsingular. Then the last identity gives 
$ f^{k-1}(U)=a_1U+\cdots +a_{k-1}f^{k-2}(U)$ and $U \in I$. In this case $e_1 \in I$ and we have a contradiction. Then $f$ is singular.
\begin{proposition}
Assume that $\g=(V,\mu)$ is a $(2p+1)$-dimensional contact Lie algebra with $p \geq 2$. Then the linear map $f$ associated with $\varphi_2$ is singular.
\end{proposition}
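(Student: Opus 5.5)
The plan is to argue by contradiction through the ideal structure of $\g$, following the case analysis i)--iii) above. Assume that $f$ is nonsingular on $V_2=\K\{e_2,\cdots,e_{2p+1}\}$. Since $p\geq 2$, the first Proposition of the subsection on ideals (relying on \cite{Go}) gives a nontrivial proper ideal $I$ of $\g$. We will show that $I=\g$, which is the contradiction.

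The basic tool is the formula for $\ad e_1$ in the $2$-compatible description. Since $\mu(e_1,\cdot)=0$ and $\varphi_1(e_1,\cdot)=0$, the bracket of $\g$ gives $[e_1,X]=\varphi_2(e_1,X)=f(X)$ for every $X\in V_2$. So $\ad e_1$ preserves $V_2$ and acts there as $f$.

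\emph{Step 1: $e_1\in I$.} By the Proposition on ideals, some $e_1+U$ lies in $I$. Subtracting multiples of $e_1$ does not change $\ad(e_1+U)$ up to the $e_1$-coefficient, so we may take $U\in V_2$. If $U=0$ we are done, so assume $U\neq 0$. Then $f(U)=[e_1,e_1+U]\in I$, and by induction $f^k(U)\in I$ for all $k\geq 1$. Let $W$ be the span of the vectors $f^k(U)$, $k\geq 1$. This is an $f$-stable subspace of $V_2$ contained in $I$. Because $f$ is injective, $f|_W$ is injective, hence bijective on the finite-dimensional space $W$. Since $f(U)\in W$, we get $U=f^{-1}(f(U))\in W\subset I$. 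This is exactly the Cayley--Hamilton style reduction sketched in case iii), now stated cleanly. It follows that $e_1=(e_1+U)-U\in I$.

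\emph{Step 2: $I=\g$.} Since $e_1\in I$, we have $f(X)=[e_1,X]\in I$ for every $X\in V_2$. As $f$ is nonsingular, $f(V_2)=V_2$, so $V_2\subset I$. Together with $e_1\in I$ this gives $I=\g$, contradicting properness. Hence $f$ is singular.

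The main obstacle I expect is Step 1, namely making precise that $U\in I$. The paper's sketch, which writes a linear relation among the $f^k(U)$ and divides by $f$, silently requires the relation to start at $f(U)$ and the constant term to be handled. I would replace it by the restriction argument on the $f$-stable subspace $W$ given above. I would also double-check the reduction $U\in V_2$, that is, that the $e_1$-component of $U$ can be absorbed into the coefficient of $e_1$. The ideal existence for $p\geq 2$ is taken from \cite{Go} as stated earlier in the paper.
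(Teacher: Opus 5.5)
Your proof is correct and follows the paper's own argument. The paper likewise takes a proper ideal $I$ and uses $e_1+U\in I$ together with $\ad e_1|_{V_2}=f$ to get $f^k(U)\in I$. In its case iii) it deduces $U\in I$, hence $e_1\in I$, when $f$ is nonsingular; in its case ii) it concludes $V_2=f(V_2)\subset I$, i.e.\ $I=\g$. Your restriction of $f$ to the $f$-stable subspace $W$ is a clean repair of the paper's loose division step.

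The one thing to fix is your justification of $U\in V_2$. You cannot subtract multiples of $e_1$ inside $I$, but you do not need to. The proof of the ideal proposition gives some $X\in I$ with $\om_1(X)\neq 0$, and dividing $X$ by $\om_1(X)$ yields $e_1+U\in I$ with $U\in\ker\om_1=V_2$.
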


\begin{corollary}
If $p\geq 2$, then the rank of $f_s$ satisfies
$${\rm rk}(f_s) \leq 2p-2.$$
\end{corollary}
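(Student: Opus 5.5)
We want ${\rm rk}(f_s)\le 2p-2$, i.e. the null eigenvalue of $f$ has algebraic multiplicity at least $2$. The previous Proposition only guarantees that $f$ is singular, which gives multiplicity at least $1$. The extra multiplicity will come from the symplectic structure on $V_2$, using the Proposition stating that $\tilde f\in\mathcal{D}er(\mu)$.

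\textbf{Step 1: $f$ is infinitesimally symplectic.} From $\mu(f(e_i),e_j)+\mu(e_i,f(e_j))=0$ for $i,j\ge 2$ we get
$$\theta(f(x),y)+\theta(x,f(y))=0,\qquad x,y\in V_2,$$
where $\theta=\om_2\we\om_3+\cdots+\om_{2p}\we\om_{2p+1}$ is the nondegenerate form on $V_2$ induced by $\mu$ (recall $\mu(x,y)=\theta(x,y)e_1$ for $x,y\in V_2$). This is exactly the block form of the matrix $F$ described earlier, so $f\in\mathfrak{sp}(V_2,\theta)$.

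\textbf{Step 2: the characteristic polynomial is even.} For $f\in\mathfrak{sp}(V_2,\theta)$ we have $F^{T}J=-JF$, where $J$ is the matrix of $\theta$. Hence $F$ is similar to $-F^{T}$, and therefore
$$\chi_f(\lambda)=\det(\lambda-F)=\det(\lambda+F^{T})=\chi_f(-\lambda).$$
So the spectrum is symmetric under $\lambda\mapsto-\lambda$ with multiplicities. Since $\chi_f$ is even of degree $2p$, every root has even multiplicity in the following sense: $0$ occurs with even multiplicity, because $\chi_f(\lambda)=g(\lambda^2)$ and $\lambda=0$ is a root of $\lambda^{2m}$-type.

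\textbf{Step 3: conclusion.} By the previous Proposition, $f$ is singular (here $p\ge 2$), so $0$ is an eigenvalue. By Step 2 its algebraic multiplicity $s$ is even, hence $s\ge 2$. Since $f_s$ is diagonalizable with the same eigenvalues and multiplicities as $f$, we have $\dim\ker f_s=s\ge 2$, and so ${\rm rk}(f_s)=2p-s\le 2p-2$.

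The main point needing care is Step 2, namely the parity of the zero multiplicity. I would argue it directly: if $\chi_f(\lambda)=g(\lambda^2)$, then the order of vanishing at $0$ is twice the order of vanishing of $g$ at $0$. If that becomes delicate, an alternative is to use the $\theta$-orthogonal decomposition $V_2=C_0\oplus\bigoplus_{\lambda\neq0}C_\lambda$, where $\theta$ pairs $C_\lambda$ with $C_{-\lambda}$. Then $\theta$ is nondegenerate on $C_0$, so $\dim C_0$ is even.
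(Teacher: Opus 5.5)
Your proof is correct and follows the same route as the paper, which likewise combines the singularity of $f$ (previous Proposition) with the $\lambda\mapsto-\lambda$ symmetry of the spectrum of $f\in\mathfrak{r}_p$ to conclude that the eigenvalue $0$ has multiplicity at least $2$. Your evenness argument $\chi_f(\lambda)=\chi_f(-\lambda)$ makes precise the multiplicity count the paper leaves implicit, since symmetry of the spectrum as a set would not by itself suffice; the paper's remark that $f_s$ is a derivation of $\varphi_1$ plays no role in either argument.
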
 
\pf In fact, we have seen that the semi-simple part $f_s$ of $f$ is also a derivation of the nonassociative algebra $(V,\varphi_1)$. But, if $\lambda$ is an eigenvalue of $f_s$, then $-\lambda$ is also an eigenvalue. Thus if $0$ is an eigenvalue of $f$, it is also an eigenvalue of $f_s$ and its multiplicity is greater or equal to $2$.

\medskip

Let us return to the previous example 
$$[e_1,e_2]=e_2, \ [e_1,e_3]=-e_3, \ [e_2,e_3]=e_1, \ [e_2,e_4]=e_2, \ [e_3,e_4]=-e_3, \ [e_4,e_5]=e_1+e_4.$$ If $I=\K\{e_1+e_4,e_5\}$ is an ideal, there are other ideals and the contact form is non-zero on each of them. For example $I_2=\K\{e_1+e_4\}$ or $I_3=\{e_1,e_2,e_3\}$. In this last example, we see that the Reeb vector field $e_1$ belongs to the ideal $I_3$. In particular $\im f \subset I_3$. Let us note also that if $\om$ is a contact form on a Lie algebra $\g$, since the contact condition is an open condition, any  linear form close to $\om$ is also a contact form. This permits to assume that if $\g$ is a contact Lie algebra and $\om$ a contact form on $\g$, then the Reeb vector field associated with $\om$ is an element of an ideal of $\g$.


\section{On the classification of contact Lie algebras}It is certainly unrealistic to think that this approach can achieve the contact Lie algebras classification. It should be remembered that few general results are known for real or complex finite dimensional Lie algebra classification, except for the simple or semi-simple Lie algebras. For example, complex or real nilpotent Lie algebras are only classified up to the dimension $7$. Particular cases are also classified, but hoping for a  general classification in all dimensions today is unimaginable.  In this work, we are only interested in contact Lie algebras. Our approach regarding the classification of this family is based on the notion of quadratic deformations.  
We are therefore not going to propose here a general classification of contact Lie algebras, but rather describe what can be expected through this decomposition method based on quadratic deformations.

Other promising approaches have been developed \cite{Salg1, Co, Salg2} also making it possible to describe very precise classes. Seaweeds are an example. These algebras are based on arithmetic hypotheses, graphs, root systems for example.

\subsection{Particular cases}
\subsubsection{$\varphi_2=0$}
We have already studied this case. If $\varphi_2=0$, then the contact Lie algebra $\g$ is a central extension of one-codimensional symplectic Lie algebra.

\noindent\textbf{Example: Nilpotent case.}
Let $\g$ be a contact nilpotent Lie algebra. In this case the derivation $f$ of $\varphi_1$ is nilpotent, that is $f=f_n$. From \cite{Go}, the center of $\g$ is $1$-dimensional and from \cite{Salg1},  if $\om$ is a contact form, then its Reeb vector is a generator of the center of $\g$. We deduce
\begin{lemma}
If $\g$ is a contact nilpotent Lie algebra, then the derivation $f$ of $\varphi_1$ is trivial.
\end{lemma}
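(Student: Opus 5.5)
The plan is to show that the Reeb vector $e_1$ is central in $\g$ and then read off $f$ from the brackets of $e_1$. Fix a contact form $\om=\om_1$ and a Darboux basis $\{e_1,\dots,e_{2p+1}\}$. In this basis $\g$ is the $2$-compatible Lie algebra $(V,\mu,\varphi_1,\varphi_2)$, where $\mu$ is the Heisenberg bracket of $\mathcal{H}_{2p+1}$. By construction, the brackets of $\g$ with $e_1$ are
$$[e_1,e_i]=\mu(e_1,e_i)+\varphi_1(e_1,e_i)+\varphi_2(e_1,e_i)=\varphi_2(e_1,e_i)=f(e_i),\qquad 2\le i\le 2p+1,$$
since $\mu(e_1,\cdot)=0$ and $\varphi_1(e_1,\cdot)=0$. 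Hence $f$ is exactly the restriction of $\ad e_1$ to $V_2=\K\{e_2,\dots,e_{2p+1}\}$. It therefore suffices to prove that $e_1$ lies in the center $\mathcal{Z}(\g)$.

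To prove centrality, invoke the two results recalled just before the lemma. By \cite{Go}, the center of a nilpotent contact Lie algebra is one-dimensional. By \cite{Salg1}, the Reeb vector of a contact form generates this center. For a self-contained check, one can also argue directly. Let $Z\neq 0$ span $\mathcal{Z}(\g)$, which is nonzero because $\g$ is nilpotent. For every $Y$ we have $d\om(Z,Y)=-\om[Z,Y]=0$, so $Z\in\ker d\om$. The kernel of $d\om$ is the line $\K e_1$, since $\om\wedge(d\om)^p\neq0$ forces $d\om$ to have rank $2p$. Hence $Z$ is a nonzero multiple of $e_1$, and $e_1$ is central. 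Note that this direct argument needs neither the one-dimensionality of the center nor \cite{Salg1}; it only uses that a nonzero central vector exists.

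Once $e_1$ is central, $f(e_i)=[e_1,e_i]=0$ for all $i\ge2$, so $f=0$; in particular $f=f_n=0$ and $\varphi_2=0$. This is consistent with case i) of the preceding discussion: $\K\{e_1\}$ is an ideal, and $\g$ is a central extension of a $2p$-dimensional symplectic Lie algebra.

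The only delicate point I expect is the identification $\ker d\om=\K e_1$. One must check that the Reeb vector, defined by $\om(e_1)=1$ and $i(e_1)d\om=0$, is indeed the basis vector $e_1$ of the Darboux basis. This follows from $d\om_1=\om_2\wedge\om_3+\cdots+\om_{2p}\wedge\om_{2p+1}$, which does not involve $\om_1$.
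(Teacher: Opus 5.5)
Your proof is correct and has the same skeleton as the paper's: show that the Reeb vector $e_1$ is central, so that $f=\ad(e_1)|_{V_2}$ vanishes. The difference lies in how you obtain centrality.

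The paper combines two citations: $\dim\mathcal{Z}(\g)=1$ from \cite{Go}, and ``the Reeb vector generates the center'' from \cite{Salg1}. You instead observe that any $Z\in\mathcal{Z}(\g)$ satisfies $d\om(Z,\cdot)=-\om[Z,\cdot]=0$. Hence $Z$ lies in the radical $\ker d\om$, which is the line $\K e_1$.

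Your argument is more elementary and more general. It shows $\mathcal{Z}(\g)\subseteq\K e_1$ for every contact Lie algebra and every contact form. Nilpotency is then used only to guarantee $\mathcal{Z}(\g)\neq 0$, and both cited facts follow in the nilpotent case.

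You also make explicit the identification $f=\ad(e_1)|_{V_2}$, which the paper leaves implicit. This correctly relies on $\om[e_1,e_i]=-d\om(e_1,e_i)=0$, so $[e_1,e_i]$ has no $e_1$-component. The paper's preliminary remark that $f=f_n$ plays no role in either argument.
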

As a consequence, any contact nilpotent Lie algebra is a one dimensional central extension of a symplectic nilpotent Lie algebra. We find again the result of   \cite{Salg1} (Theorem 4.3).

\subsubsection{ $\varphi_1=0$} If $\varphi_1=0$, then $\delta_\mu \varphi_2=-\varphi_1 \circ \varphi_1=0.$ But this implies (Proposition (\ref{fi2})), when $p \geq 2$ that $\varphi_2=0$. In this case $\g$ is the Heisenberg Li algebra. If $\dim \g=3$, then we have seen that any $3$-dimensional contact Lie algebra is isomorphic to a $2$-compatible Lie algebra with $\varphi_1=0.$
\begin{proposition}
Let $\g$ be a $2p+1$-dimensional contact Lie algebra, $p \geq 2$, whose $2$-compatible decomposition corresponds to $\varphi_1=0$. Then $\g$ is the Heisenberg Lie algebra, that is $\mu=\mu_0$.
\end{proposition}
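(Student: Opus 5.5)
The plan is to start from the $2$-compatible decomposition $\g=(V,\mu_0,\varphi_1,\varphi_2)$ attached to a Darboux basis $\{e_1,\dots,e_{2p+1}\}$, with $\mu_0$ the Heisenberg bracket $\mu_0(e_{2i},e_{2i+1})=e_1$. The hypothesis $\varphi_1=0$ turns the system (\ref{EQ2})--(\ref{EQ5}) into a single nontrivial condition on $\varphi_2$. Indeed, (\ref{EQ2}) and (\ref{EQ4}) become trivial, and (\ref{EQ5}) holds by construction. The only remaining constraint is (\ref{EQ3}), which now reads $\delta_{\mu_0}\varphi_2=-\varphi_1\circ\varphi_1=0$.

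Next I would invoke Proposition \ref{fi2}. Since $p\geq 2$, the condition $\delta_{\mu_0}\varphi_2=0$ forces $\varphi_2=0$. To make this step transparent, fix $k\geq 2$ and choose an index $i$ with $2i,2i+1\neq k$; this is possible precisely because $p\geq 2$, so there are at least two Darboux pairs. In $\delta_{\mu_0}\varphi_2(e_{2i},e_{2i+1},e_k)$, every term in which $\varphi_2$ has two arguments from $V_2=\K\{e_2,\dots,e_{2p+1}\}$ vanishes, because $\varphi_2$ is only nonzero on pairs involving $e_1$. The terms $\mu_0(\varphi_2(\cdot,\cdot),\cdot)$ also vanish: $\varphi_2$ takes values in $V_2$, the arguments are not paired with $e_k$ in the Heisenberg sense, and $\mu_0(e_1,\cdot)=0$. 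What survives is $\pm\varphi_2(e_1,e_k)=\pm f(e_k)$. Hence $f=0$, and therefore $\varphi_2=0$.

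With $\varphi_1=\varphi_2=0$ the deformation is $\mu_t=\mu_0$, so the bracket of $\g$ in the Darboux basis is $\mu=\mu_0$. In other words, the structure constants $C_{ij}^k$ for $k\geq2$ all vanish, and $\g\cong\mathcal{H}_{2p+1}$.

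The main obstacle is the bookkeeping in the second step: checking that exactly one term survives in $\delta_{\mu_0}\varphi_2(e_{2i},e_{2i+1},e_k)$. This relies on the $e_1$-components of $\varphi_2$ being zero, which holds by construction, since the sums defining $\varphi_2$ start at $k\geq2$. It also relies on the fact that $f$ being a derivation of $\mu_0$ (the earlier Proposition) does not interfere with that surviving term. Finally, one must note the contrast with the case $p=1$: there the required choice of $i$ is impossible, and indeed nonzero $\varphi_2$ occur, giving $sl(2)$ and the other $3$-dimensional examples.
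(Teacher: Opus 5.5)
Your proof is correct and is the paper's own argument. With $\varphi_1=0$, Equation (\ref{EQ3}) reduces to $\delta_{\mu_0}\varphi_2=0$. Proposition \ref{fi2} then gives $\varphi_2=0$, hence $\mu=\mu_0$; its computation $\delta_{\mu_0}\varphi_2(e_{2i},e_{2i+1},e_k)=\varphi_2(e_1,e_k)$ is the one you spell out, and you correctly use $p\geq 2$ to choose a Darboux pair that avoids $e_k$. Your side remarks about the $e_1$-components of $\varphi_2$ and about $f$ being a derivation of $\mu_0$ are not needed: the terms $\mu_0(\varphi_2(\cdot,\cdot),\cdot)$ vanish simply because $\varphi_2$ is zero on $V_2\times V_2$.
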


\subsection{ Case $\varphi_1\neq 0$ and $\varphi_2 \neq 0$.} 

We assume now that $\K=\C$. We have seen that $f$ is a derivation of $\varphi_1$. 
\begin{lemma}
Let $f$ be the endomorphism of $V_2=\K\{e_2,\cdots,e_{2p+1}\}$ associated with $\varphi_2$. If $\lambda$ is an eigenvalue of $f$, then $-\lambda$ is also an eigenvalue of $f$.
\end{lemma}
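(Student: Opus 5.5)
The plan is to use the fact, established above, that $\tilde f$ (equal to $f$ on $V_2$ and vanishing on $e_1$) is a derivation of the Heisenberg bracket $\mu$. Equivalently, $f$ is infinitesimally symplectic for the form $\theta=\om_2\we\om_3+\cdots+\om_{2p}\we\om_{2p+1}$ on $V_2$. Indeed, the relation $\mu(f(e_i),e_j)+\mu(e_i,f(e_j))=0$ reads
$$\theta(f(X),Y)+\theta(X,f(Y))=0, \qquad X,Y\in V_2,$$
since $\mu(X,Y)=\theta(X,Y)e_1$ for $X,Y\in V_2$. So $f\in\mathfrak{sp}(V_2,\theta)$, and the statement reduces to the classical symmetry of the spectrum of an element of $\mathfrak{sp}(2p,\C)$.

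To prove that symmetry, let $J$ be the matrix of $\theta$ in the Darboux basis $\{e_2,\dots,e_{2p+1}\}$; it is block diagonal with blocks $\begin{pmatrix}0&1\\-1&0\end{pmatrix}$ and is invertible. With $F$ the matrix of $f$, the identity above becomes $F^tJ+JF=0$, that is,
$$F^t=-JFJ^{-1}.$$
Hence $F$ is similar to $-F^t$, and $F^t$ has the same characteristic polynomial as $F$. Therefore
$$\chi_F(x)=\chi_{-F}(x)=(-1)^{2p}\chi_F(-x)=\chi_F(-x).$$
So $\lambda$ is a root of $\chi_F$ if and only if $-\lambda$ is, with the same multiplicity. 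One can also read this directly off the block form of $F$ computed earlier: the relations on $C_{1,i}^k$ are exactly $JF$ symmetric.

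The only point needing care is checking the sign convention, namely that the three relations from $\delta_\mu\varphi_2(e_1,e_i,e_j)=0$ indeed amount to $F^tJ+JF=0$ rather than $F^tJ-JF=0$. This follows from the derivation property already proved, so the argument is short. As a by-product, the multiplicities of $\lambda$ and $-\lambda$ agree. Since the dimension $2p$ is even, this also shows that the multiplicity of $0$ is even, which is what the preceding corollary uses.
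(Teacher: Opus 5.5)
Your proof is correct, and it takes a different route from the paper's. Both rest on the same fact, namely $F\in\mathfrak{r}_p$: $JF$ is symmetric, equivalently $\tilde f\in\mathcal{D}er(\mu)$. Your sign check is right, since $\varphi_2\circ\mu(e_1,e_i,e_j)=\theta(e_i,e_j)\varphi_2(e_1,e_1)=0$, which leaves $\delta_\mu\varphi_2(e_1,e_i,e_j)=\mu(f(e_i),e_j)+\mu(e_i,f(e_j))$.

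The paper instead fixes an eigenvector $v$ and passes to a Darboux basis of $V_2$ with $e'_2=v$. It then reads off from the block constraints of $\mathfrak{r}_p$ that a first column $(\lambda,0,\dots,0)^t$ forces the second row to be $(0,-\lambda,0,\dots,0)$. Intrinsically, this says that the nonzero linear form $\theta(v,\cdot)$ satisfies $\theta(v,f(\cdot))=-\lambda\,\theta(v,\cdot)$, so $-\lambda$ is an eigenvalue of the transpose.

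Your global similarity $F^t=-JFJ^{-1}$ gives $\chi_F(x)=\chi_F(-x)$ at once. This yields equal multiplicities for $\pm\lambda$ and an even multiplicity for $0$, which the corollary on $\mathrm{rk}(f_s)$ relies on. It also needs neither an eigenvector in $\K$ (so no use of $\K=\C$) nor a change of Darboux basis. The paper's argument as written only gives existence of $-\lambda$. To recover multiplicities from it, one must notice that $\chi_F(x)=(x-\lambda)(x+\lambda)\chi_{F''}(x)$ with the complementary block $F''\in\mathfrak{r}_{p-1}$, and then induct.

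What the paper's route buys in exchange is an explicit normal form of $F$ in an adapted Darboux basis: first column and second row both vanish when $\lambda=0$. The paper reuses exactly this to define $\mathfrak{r_0}_p$ and to normalize $F$ in the dimension-$5$ classification.
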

In fact, let $v$ be an eigenvector associated with $\lambda$, that is $f(v)=\lambda v$. We can choose a Darboux basis $\{e'_2,e'_3,\cdots,e'_{p+1}\}$ such that $v=e'_2$.  Then $\mu(e_1,e'_2)=\lambda e'_2$. In this basis, the matrix $F'$ of $f$ has the following form
$$F'=\begin{pmatrix}
      \lambda & a_{12} & -a_{42} & a_{32} & \cdots & a_{2p,2} & a_{2p-1,2} \\
     0 & -\lambda & 0 & 0 &  \cdots & 0 & 0 \\
     0 & a_{32}& a_{33} & a_{34} & \cdots &-a _{2p,4} & a_{2p-1,4} \\
     0 & a_{42}&a_{43} & a_{44} & \cdots & _{2p,3} & -a_{2p-1,3} \\
     \cdots \\
     0 & a_{2p-1,2}& a_{2p-1,3} & a_{2p-1,4} & \cdots &a_{2p-1,2p-1} & a_{2p-1,2p} \\
     0 & a_{2p,2}&a_{2p,3} & a_{2p,4} & \cdots &a _{2p,2p-1} & -a_{2p-1,2p-1} \\
     \end{pmatrix}
     $$
     and $-\lambda$ is an eigenvalue of $F$.

 Let $\h$ be the characteristic space of $f$ associated with the eigenvalue $0$. It is a subalgebra of $\g$. In fact, if $X,Y \in \h$,
$$f(\varphi_1(X,Y))=\varphi_1(f(X),Y)+\varphi_1(X,f(Y))=0$$
and $\varphi_1(X,Y) \in \h.$ Since $\varphi_2(X,Y)=0$, then $\mu(X,Y) \in \h$ and $\h$ is a Lie subalgebra of $\g$.
\begin{proposition}
$ \ker (f)$  is a Lie subalgebra of $\g$.
\end{proposition}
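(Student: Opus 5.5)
The plan is to use the decomposition $\mu_t=\mu_0+\varphi_1+\varphi_2$ (at $t=1$) of the bracket of $\g$ in a Darboux basis, and to check separately that each of the three pieces sends $\ker f\times\ker f$ into $\ker f$. Here $\ker f\subset V_2=\K\{e_2,\cdots,e_{2p+1}\}$, and the bracket of $\g$ on $V_2$ is
$$[X,Y]=\mu_0(X,Y)+\varphi_1(X,Y)+\varphi_2(X,Y),\qquad X,Y\in V_2 .$$

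\emph{The $\varphi_2$ and $\varphi_1$ pieces.} By the description of $\varphi_2$, the only nonzero products are $\varphi_2(e_1,e_i)=f(e_i)$. Hence $\varphi_2(X,Y)=0$ for $X,Y\in V_2$, and in particular on $\ker f$. For $\varphi_1$, I invoke the proposition that $\delta_{\varphi_2}\varphi_1=0$ is equivalent to $f\in\mathcal{D}er(\varphi_1)$. Then for $X,Y\in\ker f$,
$$f(\varphi_1(X,Y))=\varphi_1(f(X),Y)+\varphi_1(X,f(Y))=0 .$$
Since $\varphi_1$ takes values in $V_2$, this gives $\varphi_1(X,Y)\in\ker f$. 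This reproduces the computation the paper sketches for the characteristic space $\h$ just before the statement.

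\emph{The $\mu_0$ piece.} We have $\mu_0(X,Y)=\theta(X,Y)e_1$, where $\theta=\om_2\we\om_3+\cdots+\om_{2p}\we\om_{2p+1}$. Since $e_1\notin V_2$, closure of $\ker f$ requires $\theta(X,Y)=0$ on $\ker f$, that is, $\ker f$ must be $\theta$-isotropic. The tool is the proposition that $\tilde f\in\mathcal{D}er(\mu_0)$. It says $\theta(fX,Y)+\theta(X,fY)=0$, so $f\in sp(\theta)$, and therefore $\ker f=(\im f)^{\perp_\theta}$. Isotropy of $\ker f$ is thus equivalent to $\ker f\subset\im f$, i.e. to $(\im f)^{\perp_\theta}\subset \im f$ (coisotropy of $\im f$).

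\emph{Main obstacle: proving $\ker f\subset\im f$.} I would first use the identity from the study of $(V,\varphi_1)$: for every $k$,
$$\varphi_1\circ\varphi_1(e_{2i},e_{2i+1},e_k)=-f(e_k).$$
This expresses $f$ through $\varphi_1$, and I would combine it with the nilpotency of $(V,\varphi_1)$ and with $f_s\in\mathcal{D}er(\varphi_1)$. The aim is to show that any $v\in\ker f$ pairing nontrivially with $\ker f$ under $\theta$ forces a nonzero term $\varphi_1\circ\varphi_1(\cdot,\cdot,v)$, which contradicts $f(v)=0$.

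I must record a genuine obstruction. If $f=0$ (the central extension case $\varphi_2=0$), then $\ker f=V_2$ is not isotropic, and $\mu_0(e_2,e_3)=e_1\notin\ker f$. So the argument can only succeed in the regime of the current subsection, $\varphi_1\neq0$ and $\varphi_2\neq0$, where $f\neq0$. In that regime I would try to push the isotropy argument through. If it fails, the honest conclusion of the plan is the partial one: $\ker f$ is closed under $\varphi_1$ and $\varphi_2$, and $\mu_0$ maps $\ker f\times\ker f$ into $\K e_1$, so the statement holds exactly when $\theta$ vanishes on $\ker f$.
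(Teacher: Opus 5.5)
\textbf{The isotropy step cannot be carried out, because the statement is false as written.}

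Your treatment of $\varphi_1$ and $\varphi_2$ is exactly the paper's argument. The paper stops there and concludes ``$\mu(X,Y)\in\h$''. It never examines the Heisenberg part $\mu_0(X,Y)=\theta(X,Y)e_1$. You are right that this term is the whole issue. However, the step you call the main obstacle cannot be done: $\ker f\subset\im f$ fails in general, even in the regime $\varphi_1\neq0\neq\varphi_2$.

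Here is why. If $f$ is semisimple, then $V_2=\ker f\oplus\im f$. Since $\ker f=(\im f)^{\perp_\theta}$, as you showed, $\theta$ restricted to $\ker f$ is nondegenerate. For $p\geq 2$ the paper proves $f$ is singular, so $\ker f\neq0$ contains $X,Y$ with $\theta(X,Y)\neq0$. Then $[X,Y]$ has a nonzero $e_1$-component and cannot lie in $\ker f\subset V_2$.

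Concrete counterexamples come from the paper itself:
\begin{itemize}
\item In the example $sl(2)\oplus\mathfrak{r}(2)$, one has $f(e_2)=e_2$, $f(e_3)=-e_3$ and $\ker f=\K\{e_4,e_5\}$, but $[e_4,e_5]=e_1+e_4\notin\ker f$.
\item In every diagonal algebra of the classification section, $\ker f=\K\{e_2,e_3\}$, while $[e_2,e_3]$ always has the component $\mu_0(e_2,e_3)=e_1$.
\end{itemize}
So neither the identity $\varphi_1\circ\varphi_1(e_{2i},e_{2i+1},e_k)=-f(e_k)$ nor the nilpotency of $(V,\varphi_1)$ can close this gap. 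The proposition, read literally, is false, and the paper's proof is incomplete precisely where you flagged it.

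\textbf{What survives.} Your fallback is the correct endpoint. Two true statements remain:
\begin{itemize}
\item $\ker f$ is a subalgebra of $(V_2,\varphi_1)$.
\item $\K e_1\oplus\ker f=\ker\tilde f$ is a Lie subalgebra of $\g$.
\end{itemize}
The second holds because $\ker\tilde f$ is the centralizer of the Reeb vector: for $X\in V_2$ one has $[e_1,X]=\varphi_2(e_1,X)=f(X)$, and a centralizer is always a subalgebra. The troublesome term $\theta(X,Y)e_1$ now lies inside the subspace. This is the version you should state and prove in place of the original.
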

As a consequence, the characteristic space associated with the eigenvalue $0$ is also a subalgebra of $\g$. In fact if $\{v_2,\cdots,v_k\}$ is a Jordan block associated with the eigenvector $v_2$, that is $f(v_2)=0, f(v_i)=v_{v-1}$, then
$$f(\varphi_1(v_2,v_3))=0,\ f( \varphi_1(v_2,v_i))=\varphi_1(v_2,v_{i-1}), \ f(\varphi_1v_i,v_j))=\varphi_1(v_{i-1},v_j)+\varphi_1(v_i,v_{j-1}).$$

Let us denote by $\widetilde{\lambda_i}$ one of the two eigenvalues $\lambda_i$ or $-\lambda_i$. If 
$\mathcal{R}=\{\widetilde{\lambda_i}\neq 0\}$,  let us consider on $\mathcal{R}$ the linear system
$$\widetilde{\lambda_i}+\widetilde{\lambda_j}=\widetilde{\lambda_k}.$$
The rank of $\mathcal{R}$ is an invariant of the contact structure  and it determines a grading of $V_2$. 
\subsection{Classification }

\subsubsection{Classification modulo $GL(2p)$}

Let $\g$ be a contact Lie algebra, $\om$ a contact form  on $\g$ and $\{e_1,\cdots e_{2p+1}\}$ a Darboux basis associated with $\om$, $e_1$ being the Reeb field of $\om$. First, we can consider a  change of basis  that leaves $e_1$ and $V_2=\K\{e_2,\cdots,e_{2p+1}\}$ invariant. It is defined by an isomorphism $P$ of $\K^{2p+1}$ of which matrix in the Darboux basis is
$$P=
\begin{pmatrix}
1 & 0 \\
0 & P_1
\end{pmatrix}
$$
with $P_1 \in GL(2p)$. Thischange of basis preserves the $2$-compatible structure, but does not retain the reduced form of the contact form $\om$. The new base is not a Darboux basis of $\om$. 
\subsubsection{The Lie Group $R_p$ and the Lie algebra $\mathfrak{r}_p$}
After this isomorphism, we can reduce the form of contact to canonical form. The actions described below can be interpreted as the action of a Lie subgroup of $GL(2p+1)$. 
 Let $\Theta_p$ the skew-symmetric bilinear form on $\K^{2p}$ represented by $\om_2 \we \om_3 + \cdots+ \om_{2p} \we \om_{2p+1}$. It represented by the matrix
 $$
 \Theta_p=\begin{pmatrix}
    0  & 1 & 0 & 0 & \cdots & 0 & 0  \\
    -1 & 0 & 0 & 0 & \cdots & 0 & 0  \\
  0  & 0 & 0 & 1 & \cdots & 0 & 0  \\
   0 & 0 & -1 & 0 & \cdots & 0 & 0  \\
   \cdots \\
    0  & 0 & 0 & 0 & \cdots & 0 & 1  \\
    0 & 0 & 0 & 0 & \cdots &-1 & 0  \\
\end{pmatrix}
$$
\begin{proposition}
Let $R_p$ be the subgroup of $GL(2p,\K)$ given by
$$R_p=\{ A \in GL(2p,\K), 
 \,  
 ^{ t} \! A\Theta_p A=\Theta_p\}.$$
Then $R_p$ is a Lie subgroup of $GL(2p,\K)$.
\end{proposition}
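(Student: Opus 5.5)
We want to show that $R_p=\{A\in GL(2p,\K) : {}^tA\,\Theta_p A=\Theta_p\}$ is a Lie subgroup of $GL(2p,\K)$. The approach is first to check that $R_p$ is a subgroup, and then that it is closed, so that Cartan's closed subgroup theorem applies. Over $\K=\C$ or $\R$ this gives an embedded Lie subgroup. For a general algebraically closed field of characteristic $0$ one reads ``Lie subgroup'' as ``algebraic subgroup'', since $R_p$ is cut out by polynomial equations.

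\textbf{Step 1 (subgroup).} Note first that $\Theta_p$ is invertible: it is block diagonal with blocks $\begin{pmatrix}0&1\\-1&0\end{pmatrix}$, so $\det\Theta_p=1$. Clearly $I\in R_p$. If $A,B\in R_p$, then
$${}^t(AB)\,\Theta_p\,(AB)={}^tB\,({}^tA\,\Theta_p A)\,B={}^tB\,\Theta_p B=\Theta_p .$$
For inverses, apply ${}^t(A^{-1})$ on the left and $A^{-1}$ on the right of ${}^tA\,\Theta_p A=\Theta_p$. This gives $\Theta_p={}^t(A^{-1})\,\Theta_p\,A^{-1}$, so $A^{-1}\in R_p$. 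In fact $R_p$ is exactly the symplectic group $Sp(2p,\K)$ of the form $\om_2\we\om_3+\cdots+\om_{2p}\we\om_{2p+1}$.

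\textbf{Step 2 (closedness).} The map $\Phi:GL(2p,\K)\to M_{2p}(\K)$, $\Phi(A)={}^tA\,\Theta_p A$, is polynomial in the entries of $A$. Hence $R_p=\Phi^{-1}(\Theta_p)$ is Zariski closed, and over $\R$ or $\C$ it is also closed in the usual topology. By Cartan's theorem, $R_p$ is then a Lie subgroup.

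\textbf{Step 3 (Lie algebra).} This step identifies the Lie algebra and is needed later for $\mathfrak{r}_p$. Differentiate $\Phi$ at $I$: $d\Phi_I(X)={}^tX\,\Theta_p+\Theta_p X$. So the Lie algebra is
$$\mathfrak{r}_p=\{X\in\mathfrak{gl}(2p,\K) : {}^tX\,\Theta_p+\Theta_p X=0\}.$$
To confirm this is the full tangent space rather than just an inclusion, show that $\Theta_p$ is a regular value of $\Phi$, i.e. that $d\Phi_A$ maps onto the space of skew-symmetric matrices for every $A\in R_p$. This surjectivity is the main obstacle, though a mild one. Given a skew matrix $S$, the choice $X=\tfrac12\,\Theta_p^{-1}S$ solves $d\Phi_I(X)=S$, because $\Theta_p^{-1}=-\Theta_p={}^t\Theta_p$. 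Left translation then transfers this to any $A\in R_p$. Consequently $\dim R_p=4p^2-p(2p-1)=p(2p+1)$, and $\mathfrak{r}_p\cong\mathfrak{sp}(2p,\K)$.
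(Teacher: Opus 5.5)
Your proof is correct and follows the same route as the paper. The paper simply notes that $R_p$ is a closed subgroup of $GL(2p,\K)$, so the closed subgroup theorem applies, and then obtains $\mathfrak{r}_p$ by linearizing ${}^tA\,\Theta_p A=\Theta_p$ at the identity; your regular-value check in Step 3 goes further by justifying that this linearization gives the whole tangent space, hence $\dim R_p=p(2p+1)$.
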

In fact it is a closed subgroup of $GL(2p,\K)$. To compute its Lie algebra, we consider an infinitesimal deformation $A+\Delta$ of $A$. We obtain
$$^t(A+\Delta)\Theta_p (A+\Delta)=\Theta_p$$
and at the first order
$$^tA\Theta_p + \Theta_p A=0$$
\begin{proposition}
The Lie algebra $\mathfrak{r}_p$ of the Lie Group $R_p$ is the Lie subalgebra of $gl(2p,\K)$ given by
$$\mathfrak{r}_p = \{A \in gl(2p,\K),
\,  ^t \! A\Theta_p +\Theta_p A=0\}.$$
\end{proposition}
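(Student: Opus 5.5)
Since $R_p$ is a closed subgroup of $GL(2p,\K)$, its Lie algebra is the tangent space at the identity. I would identify it as
$$\mathfrak{r}_p=\{A\in gl(2p,\K)\ :\ \exp(sA)\in R_p \text{ for all } s\in\K\},$$
and then prove the two inclusions against the set $\{A : {}^t\!A\Theta_p+\Theta_pA=0\}$. Note that the linearization displayed above the statement has to be done at the identity (equivalently, one writes the perturbation as $A(I+\Delta)$ and linearizes in $\Delta$); linearizing at a general point $A$ would not directly give a condition on $\Delta$ of the stated form. So I would set the computation up at $I$ from the start.

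\emph{First inclusion.} Let $A\in\mathfrak{r}_p$, and differentiate the identity
$${}^t\!\exp(sA)\,\Theta_p\,\exp(sA)=\Theta_p$$
at $s=0$. This gives ${}^t\!A\Theta_p+\Theta_pA=0$.

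\emph{Second inclusion.} Suppose ${}^t\!A\Theta_p+\Theta_pA=0$. Since $\Theta_p$ is invertible, this can be rewritten as ${}^t\!A=-\Theta_pA\Theta_p^{-1}$. Exponentiating gives
$${}^t\!\exp(sA)=\exp(s\,{}^t\!A)=\Theta_p\exp(-sA)\Theta_p^{-1},$$
hence ${}^t\!\exp(sA)\,\Theta_p\,\exp(sA)=\Theta_p$. So $\exp(sA)\in R_p$ for all $s$, and $A\in\mathfrak{r}_p$.

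\emph{Closure under the bracket.} This is automatic from the Lie group structure, but it can also be checked directly. If ${}^t\!A=-\Theta_pA\Theta_p^{-1}$ and ${}^t\!B=-\Theta_pB\Theta_p^{-1}$, then
$${}^t[A,B]={}^t\!B\,{}^t\!A-{}^t\!A\,{}^t\!B=\Theta_p(BA-AB)\Theta_p^{-1}=-\Theta_p[A,B]\Theta_p^{-1}.$$
This confirms that the set is a Lie subalgebra of $gl(2p,\K)$.

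\emph{Expected obstacle.} The only delicate point is the base field. Over $\R$ or $\C$, the closed-subgroup theorem applies as stated. For a general algebraically closed field of characteristic $0$, I would instead treat $R_p$ as an algebraic group, the symplectic group $Sp(2p,\K)$, which is what $R_p$ is. Its Lie algebra is then the kernel of the differential of the defining equations, which is the same linearization computed in the first inclusion. Smoothness of the group (it is reduced, since the characteristic is $0$) guarantees that this tangent space has the correct dimension $p(2p+1)$, and the result is $\mathfrak{sp}(2p,\K)$.
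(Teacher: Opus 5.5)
Your proof is correct, and its first inclusion is the paper's own argument: $R_p$ is closed, and one linearizes ${}^t(A+\Delta)\Theta_p(A+\Delta)=\Theta_p$. You correctly place the base point at the identity, which the paper's notation blurs by writing $A$ for the tangent vector. The paper stops at this first-order computation, so it only shows that the Lie algebra of $R_p$ lies inside $\{X:{}^tX\Theta_p+\Theta_pX=0\}$. Your additions fill what the paper leaves implicit: the exponential argument gives the converse inclusion, the bracket check gives closure, and the algebraic-group treatment handles a general algebraically closed $\K$, where the exponential is unavailable.
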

Let $A$ be in $\mathfrak{r}_p$. Let us write $A$ on the following form
$$A=
\begin{pmatrix}
    A_{11}  &  A_{12} & \cdots & A_{1p}  \\
 A_{21}  &  A_{22} & \cdots & A_{2p}  \\
\cdots \\
 A_{p1}  &  A_{p2} & \cdots & A_{pp}  \\
\end{pmatrix}
$$
where $A_{ij}$ is a $2 \times 2$ matrix. Then $A  \in \mathfrak{r}_p$ is equivalent to
\begin{enumerate}
  \item $ \textrm{tr}A_{ii}=0$, $i=1, \cdots p$ \\
  \item Let $ i \neq j$ and $A_{ij}=\begin{pmatrix}
    a  &  b  \\
     c &  d
\end{pmatrix}
$. Then $A_{ji}=\begin{pmatrix}
    -d  &  b  \\
    c  &  -a
\end{pmatrix}
$. We shall write $A_{ji}=\widetilde{A_{ij}}.$
\end{enumerate}
Then any element $A$ of $ \mathfrak{r}_p$ is written
$$A=
\begin{pmatrix}
    A_{11}  &  \widetilde{A_{21} }& \cdots & \widetilde{A_{p1} } \\
 A_{21}  &  A_{22} & \cdots & \widetilde{A_{p2}}  \\
\cdots \\
 A_{p1}  &  A_{p2} & \cdots & A_{pp}  \\
\end{pmatrix}
$$
with $ \textrm{tr}A_{ii}=0$.
We deduce 
$$\dim  \mathfrak{r}_p= p(2p+1).$$
Let's now study the structure of this algebra and start with $p=2$.  Any element of $\mathfrak{n}_2$ is of type
$$A=\begin{pmatrix}
    A_{11}  &   \widetilde{A_{21}}   \\
 A_{21}  &  A_{22}  \\
\end{pmatrix}
$$
Let $\mathfrak{h}_2$
 be the subspace of $\mathfrak{n}_2$ whose elements are the matrices
 $$\begin{pmatrix}
    A_{11}  &  0  \\
 0  &  A_{22}  \\
\end{pmatrix}
$$
Then $\mathfrak{h}_2$ is a semi-simple Lie subalgebra of $\mathfrak{n}_2$ isomorphic to $sl(2) \times sl(2)$. Let $\mathfrak{m}_2$ be the subspace of $\mathfrak{n}_2$ whose elements are
$$\begin{pmatrix}
   0 &   \widetilde{A_{21}}  \\
  A_{21} &  0  \\
\end{pmatrix}
$$
\begin{proposition}
The Lie algebra $\mathfrak{r}_2$ admits the decomposition
$$\mathfrak{r}_2=\mathfrak{h}_2 +\mathfrak{m}$$
with
$$\left[\mathfrak{h}_2 ,\mathfrak{h}_2 \right] \subset \mathfrak{h}_2 ,  \ \left[\mathfrak{m} ,\mathfrak{m}\right]\subset \mathfrak{h}_2, \ \left[\mathfrak{h}_2 ,\mathfrak{m}\right] \subset \mathfrak{m}$$
This decomposition corresponds to a $\mathbb{Z}_2$-grading of $\mathfrak{n}_2$.
Then if $H_2$ is the semi-simple Lie group associated with the Lie subalgebra $\mathfrak{h}_2$, the homogeneous space $R_2/H_2$ is a provided with a symmetric structure.
\end{proposition}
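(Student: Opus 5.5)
Write $\mathfrak{r}_2=\mathfrak{sp}(4,\K)$ as the set of $4\times 4$ matrices $A$ with ${}^tA\Theta_2+\Theta_2A=0$, cut into $2\times 2$ blocks as above. The plan is to find an involutive automorphism $\sigma$ of $\mathfrak{r}_2$ whose $+1$-eigenspace is $\mathfrak{h}_2$ and whose $-1$-eigenspace is $\mathfrak{m}$. The three bracket inclusions, the $\mathbb{Z}_2$-grading and the symmetric structure on $R_2/H_2$ then all follow from this single fact.

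Take $S=\mathrm{diag}(I_2,-I_2)$ and set $\sigma(A)=SAS^{-1}=SAS$. First check that $S\in R_2$: $\Theta_2=\mathrm{diag}(J,J)$ with $J=\begin{pmatrix}0&1\\-1&0\end{pmatrix}$, and $S$ is block diagonal with blocks $\pm I_2$, so ${}^tS\Theta_2S=\Theta_2$. Since $R_2$ is a group, conjugation by $S$ preserves $R_2$. It therefore preserves its Lie algebra, and it acts there as a Lie algebra automorphism, because $S[A,B]S=[SAS,SBS]$.

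From $S^2=I$ we get $\sigma^2=\mathrm{id}$. A direct block computation gives
$$\sigma\begin{pmatrix}A_{11}&\widetilde{A_{21}}\\A_{21}&A_{22}\end{pmatrix}=\begin{pmatrix}A_{11}&-\widetilde{A_{21}}\\-A_{21}&A_{22}\end{pmatrix}.$$
So the fixed space of $\sigma$ is exactly $\mathfrak{h}_2$, the $-1$-eigenspace is exactly $\mathfrak{m}$, and $\mathfrak{r}_2=\mathfrak{h}_2\oplus\mathfrak{m}$.

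Because $\sigma$ is an automorphism, the bracket of a $\pm1$-eigenvector with a $\pm1$-eigenvector is an eigenvector for the product of the signs. This gives directly
$$[\mathfrak{h}_2,\mathfrak{h}_2]\subset\mathfrak{h}_2,\qquad [\mathfrak{h}_2,\mathfrak{m}]\subset\mathfrak{m},\qquad [\mathfrak{m},\mathfrak{m}]\subset\mathfrak{h}_2,$$
which is precisely a $\mathbb{Z}_2$-grading. As a sanity check, one can verify by hand that the off-diagonal blocks of $[X,Y]$ for $X,Y\in\mathfrak{m}$ vanish. Each diagonal block of this bracket has the form $\widetilde{B}C-\widetilde{C}B$, and its trace vanishes because $\mathrm{tr}(\widetilde{B}C)=\mathrm{tr}(\widetilde{C}B)$ by the explicit formula for the tilde.

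For the symmetric-space statement, note that $H_2$ is the connected subgroup with Lie algebra $\mathfrak{h}_2\cong sl(2)\times sl(2)$; it is semisimple because $\mathfrak{h}_2$ is. Define the group involution $s(g)=SgS$ on $R_2$. Its differential is $\sigma$, so $H_2$ lies between the identity component of $\mathrm{Fix}(s)$ and $\mathrm{Fix}(s)$ itself. Hence $(R_2,H_2,s)$ is a symmetric pair, and $R_2/H_2$ is a symmetric space in the sense of Cartan–Nomizu, with $\mathfrak{m}$ identified with the tangent space at the base point.

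The only real obstacle is bookkeeping with the tilde convention when checking that $\sigma$ preserves $\mathfrak{r}_2$. The conceptual argument $S\in R_2$ avoids this entirely, so I would rely on it rather than on block computations.
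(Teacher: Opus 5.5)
Your proof is correct. The paper gives no argument for this proposition. It defines $\mathfrak{h}_2$ as the block-diagonal part and $\mathfrak{m}$ as the block off-diagonal part of $\mathfrak{r}_2$, then asserts the bracket relations and the symmetric structure; the implicit justification is direct block multiplication.

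Your involution $\sigma=\mathrm{Ad}_S$, with $S=\mathrm{diag}(I_2,-I_2)\in R_2$, encodes exactly that block-parity observation. It also gives you two things the paper leaves unaddressed:
\begin{enumerate}
\item Closure of the brackets inside $\mathfrak{r}_2$ is automatic, with no bookkeeping of the tilde convention.
\item You obtain an explicit group involution $s(g)=SgS$ of $R_2$. Its fixed group is the block-diagonal $SL(2)\times SL(2)$, which is connected, so in fact $H_2=\mathrm{Fix}(s)$. This is precisely what is needed to speak of a symmetric structure on $R_2/H_2$, and the paper never exhibits it.
\end{enumerate}

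The same device, applied with $\mathrm{diag}(\epsilon_1I_2,\dots,\epsilon_pI_2)$, also explains the $\mathbb{Z}_2^{p-1}$-grading of the subsequent theorem. Block $(i,j)$ gets degree $e_i+e_j$ in the even-weight subspace of $\mathbb{Z}_2^p$. In the basis $e_k+e_{k+1}$, these degrees are exactly the strings of consecutive ones that make up $\Gamma$.

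One harmless slip in your sanity check: the lower diagonal block of $[X,Y]$ is $B\widetilde{C}-C\widetilde{B}$, not of the form $\widetilde{B}C-\widetilde{C}B$. Its trace still vanishes by cyclicity of the trace, and your main argument does not depend on this check.
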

Finally, for the general case, we have the decomposition
$$\mathfrak{r}_p=
\begin{pmatrix}
    \mathfrak{h}_p  &   \mathfrak{m}_1 &  \mathfrak{m}_{p} & \cdots&  \mathfrak{m}_{(p^2-p-4)/2}  & \mathfrak{m}_{p(p-1)/2} \\
       \mathfrak{m}_1  &   \mathfrak{h}_p &  \mathfrak{m}_2& \cdots  &  \mathfrak{m}_{(p^2-p-8)/2}  & \mathfrak{m}_{(p^2-p-2)/2} \\ 
         \mathfrak{m}_{p}  &   \mathfrak{m}_{2} &  \mathfrak{h}_p &  &  \mathfrak{m}_{(p^2-p-14)/2}  &  \mathfrak{m}_{(p^2-p-6)/2}  \\
         \vdots  &   \vdots  & & \ddots & & \vdots \\
           \mathfrak{m}_{(p^2-p-4)/2} &   \mathfrak{m}_{(p^2-p-8)/2}  &  \mathfrak{m}_{(p^2-p-14)/2}  &  &   \mathfrak{h}_p  &  \mathfrak{m}_{p-1} \\
           \mathfrak{m}_{p(p-1)/2} &   \mathfrak{m}_{(p^2-p-2)/2}  &  \mathfrak{m}_{(p^2-p-6)/2}  & \cdots  &  \mathfrak{m}_{p-1} &\mathfrak{h}_p \\
\end{pmatrix}
$$
Let us consider in the group $\mathbb{Z}_2^{p-1}$ the subset $\Gamma$ whose elements are 
$$
\left\{
\begin{array}{l}
 \gamma_1=(0,0,\cdots,0,1),\gamma_2=(0,0,\cdots,1,0),\cdots,\gamma_{p-2}=(0,1,\cdots,0,0),\gamma_{p-1}=(1,0,\cdots,0,0) \\
\gamma_p=(0,0,\cdots,1,1),\gamma_{p+1}=(0,\cdots,1,1,0),\cdots, \gamma_{2p-3}=(1,1,0,\cdots,0) \\
 \gamma_{2p-2}=(0,\cdots,0,1,1,1),\gamma_{2p}=(0,\cdots,1,1,1,0),\cdots , \gamma_{3p-6}=(1,1,1,0,\cdots,0) \\
 \cdots \\
\gamma_{p(p-1)/2}=(1,1,\cdots,1,1) .\\
\end{array}
\right.
$$
Let's write now $\mathfrak{m}_{\gamma_i}$ instead of  $\mathfrak{m}_i.$
\begin{theorem}
The Lie algebra $\mathfrak{r}_p$ admits the following decomposition
$$\mathfrak{r}_p=\mathfrak{h}_p \bigoplus_{\gamma_i \in \Gamma} \mathfrak{m}_{\gamma_i}$$
with
\begin{enumerate}
  \item $\mathfrak{h}_p=sl(2)\times sl(2) \times \cdots \times sl(2)$ ($p$ times)
  \item $\left[\mathfrak{m}_{\gamma_i},\mathfrak{m}_{\gamma_i}\right]\subset \mathfrak{h}_p, \ \ \left[\mathfrak{m}_{\gamma_i}, \mathfrak{h}_p\right]\subset \mathfrak{m}_{\gamma_i}$
  \item $\left[\mathfrak{m}_{\gamma_i},\mathfrak{m}_{\gamma_j}\right]\subset \mathfrak{m}_{\gamma_i+\gamma_j}$ if $\gamma_i+\gamma_j \in \Gamma$, if not the bracket is $0.$
\end{enumerate}
If $H_p$ is the semi-simple Lie group $SL(2)^p$, then the homogeneous space $R_p/H_p$ is provided with a $\mathbb{Z}_2^{p-1}$-symmetric structure. 
\end{theorem}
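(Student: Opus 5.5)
The plan is to realize the decomposition as the joint eigenspace decomposition of a commuting family of involutive automorphisms of $\mathfrak{r}_p$. The bracket relations (2) and (3) then become automatic consequences of the multiplicativity of characters, and the remaining work is bookkeeping to match the labels with the set $\Gamma$.

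\textbf{Step 1: the involutions.} For $s=(s_1,\dots,s_p)\in\{\pm1\}^p$ put $D_s={\rm diag}(s_1I_2,\dots,s_pI_2)$. Since $\Theta_p$ is block diagonal with $2\times 2$ blocks, ${}^tD_s\Theta_pD_s=\Theta_p$, so $D_s\in R_p$. Hence $\sigma_s={\rm Ad}(D_s)$ is an involutive automorphism of $\mathfrak{r}_p$, and these automorphisms commute pairwise. Write $A\in\mathfrak{r}_p$ in $2\times2$ blocks as in the description preceding the theorem. Then $\sigma_s$ multiplies the block $A_{ij}$ by $s_is_j$. So the joint eigenspaces are:
\begin{itemize}
\item the block diagonal part, on which every $\sigma_s$ acts trivially;
\item for each pair $i<j$, the space $\mathfrak{m}_{(i,j)}$ of matrices supported on the blocks $(i,j)$ and $(j,i)$, with $A_{ij}=\widetilde{A_{ji}}$.
\end{itemize}
The condition ${\rm tr}A_{ii}=0$ together with $\mathfrak{sp}(2)=sl(2)$ shows that the block diagonal part is $sl(2)^p=\mathfrak{h}_p$. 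This gives (1), and it also yields $\dim\mathfrak{r}_p=3p+4\binom p2=p(2p+1)$, as a check.

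\textbf{Step 2: labelling by $\mathbb{Z}_2^{p-1}$.} The character of $\mathfrak{m}_{(i,j)}$ is $s\mapsto s_is_j$. These characters lie in the even subgroup of $\widehat{\{\pm1\}^p}\cong\mathbb{Z}_2^p$, and that subgroup is isomorphic to $\mathbb{Z}_2^{p-1}$. I would fix the isomorphism sending the character of the pair $(k,k+1)$ to the $k$-th basis vector $\delta_k$, with the coordinates ordered as in $\Gamma$. Since $s_is_j=\prod_{k=i}^{j-1}s_ks_{k+1}$, the pair $(i,j)$ with $i<j$ receives the label
$$\gamma_{(i,j)}=\delta_i+\delta_{i+1}+\cdots+\delta_{j-1},$$
which is a block of consecutive $1$'s. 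These labels are exactly the elements of $\Gamma$. The map $(i,j)\mapsto\gamma_{(i,j)}$ is injective, since a consecutive string determines its endpoints. Therefore $\mathfrak{m}_{\gamma}:=\mathfrak{m}_{(i,j)}$ is well defined for $\gamma\in\Gamma$, the label $0$ gives $\mathfrak{h}_p$, and $\mathfrak{r}_p=\mathfrak{h}_p\oplus\bigoplus_{\gamma\in\Gamma}\mathfrak{m}_\gamma$.

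\textbf{Step 3: bracket relations.} Because each $\sigma_s$ is an automorphism, we get $[\mathfrak{g}_\alpha,\mathfrak{g}_\beta]\subset\mathfrak{g}_{\alpha+\beta}$ for the joint eigenspaces. Taking $\alpha=\beta=\gamma_i$ gives $[\mathfrak{m}_{\gamma_i},\mathfrak{m}_{\gamma_i}]\subset\mathfrak{g}_0=\mathfrak{h}_p$, and taking $\beta=0$ gives $[\mathfrak{m}_{\gamma_i},\mathfrak{h}_p]\subset\mathfrak{m}_{\gamma_i}$; this is (2). For (3), the joint eigenspace of a label in $\mathbb{Z}_2^{p-1}\setminus(\Gamma\cup\{0\})$ is zero, so the bracket vanishes whenever $\gamma_i+\gamma_j\notin\Gamma\cup\{0\}$. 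As a sanity check I would verify this by block multiplication:
\begin{itemize}
\item two pairs sharing an index, $(i,j)$ and $(j,k)$, give $\gamma$-sum equal to the label of $(i,k)$, and the product indeed lands in the $(i,k)$ blocks;
\item two disjoint pairs give a sum that is not a single consecutive string, and indeed all block products vanish.
\end{itemize}

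\textbf{Step 4: the symmetric structure.} The finite group $\{{\rm Ad}(D_s)\}\cong\mathbb{Z}_2^{p-1}$ (note $D_{-s}=-D_s$ acts like $D_s$) acts by commuting involutive automorphisms of $\mathfrak{r}_p$, and its fixed subalgebra is exactly $\mathfrak{h}_p$. By the standard definition of a $\Gamma$-symmetric space, conjugation by the $D_s$ induces on $R_p/H_p$, with $H_p=SL(2)^p$ the connected subgroup with Lie algebra $\mathfrak{h}_p$, a $\mathbb{Z}_2^{p-1}$-symmetric structure.

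I expect the main obstacle to be Step 2. One has to be precise that the paper's enumeration of $\Gamma$ corresponds to consecutive strings under this identification. One also has to state (3) correctly in the case $\gamma_i+\gamma_j=0$, which is covered by (2) rather than (3).
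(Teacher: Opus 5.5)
Your proof is correct, and it yields exactly the decomposition the paper displays: $\mathfrak{h}_p$ sits in the diagonal $2\times 2$ blocks and $\mathfrak{m}_\gamma$ in the block pair $(i,j),(j,i)$, labelled by a consecutive string. The paper's display puts $\mathfrak{m}_{\gamma_k}$ (with $\gamma_k$ having its single $1$ in position $p-k$) in blocks $(k,k+1)$, which is your labelling with the coordinates reversed; this is harmless because $\Gamma$ is invariant under reversal. The paper gives no argument beyond this block picture, so your commuting involutions $\mathrm{Ad}(D_s)$, $D_s\in R_p$, are exactly what is needed to make it a genuine $\mathbb{Z}_2^{p-1}$-grading and to justify the $\mathbb{Z}_2^{p-1}$-symmetric structure on $R_p/H_p$. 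For the latter, note also that the subgroup of $R_p$ fixed by all conjugations by the $D_s$ is precisely the block-diagonal $SL(2)^p=H_p$.
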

The notion of $\mathbb{Z}_2^p$-symmetric spaces is given in \cite{Bah,G.R.gamma}. Let us note that the elements of $\Gamma$ are $p$-uples elements of $\mathbb{Z}_2$,  $(i_1,\cdots , i_p)$ such that if $i_k=1$ and $i_l=1$ with $l > k$, then $i_s=1$ for $s=k,k+1,\cdots,l.$

\begin{proposition}
Let $\g=(\K^{2p+1},\mu_0,\varphi_1,\varphi_2)$ be a contact Lie algebra. Let $f$ be the endomorphisme of $\K^{2p}$ associated with $\varphi_2$. Then the matrix $F$ of $f$ belongs to $\mathfrak{r}_p$.
\end{proposition}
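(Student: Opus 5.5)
We have to show that the matrix $F$ of $f$, written in the Darboux basis $\{e_2,\dots,e_{2p+1}\}$ of $V_2$, satisfies ${}^t F\,\Theta_p+\Theta_p F=0$. This is the defining condition of $\mathfrak{r}_p$. The plan is to deduce it from the identity $\delta_\mu\varphi_2(e_1,e_i,e_j)=0$ for $i,j\geq 2$, established in the description of $\varphi_2$. That identity holds because $\varphi_1\circ\varphi_1(e_1,e_i,e_j)=0$ together with Equation (\ref{EQ3}).

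\textbf{Step 1: rewrite the condition in terms of $f$.} By the proof of the proposition stating that $\tilde f\in\mathcal{D}er(\mu)$, the identity $\delta_\mu\varphi_2(e_1,e_i,e_j)=0$ is equivalent to
$$\mu(f(e_i),e_j)+\mu(e_i,f(e_j))=0,\qquad 2\le i,j\le 2p+1 .$$

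\textbf{Step 2: express $\mu$ on $V_2$ through $\Theta_p$.} For $X,Y\in V_2$ the Heisenberg bracket is
$$\mu(X,Y)=\big(\om_2\we\om_3+\cdots+\om_{2p}\we\om_{2p+1}\big)(X,Y)\,e_1 .$$
Up to the sign convention $d\om_1(X,Y)=-\om_1[X,Y]$, this reads $\mu(X,Y)=\pm\,{}^tX\,\Theta_p\,Y\,e_1$, where $X,Y$ also denote coordinate columns. The sign is irrelevant, since the condition is homogeneous.

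\textbf{Step 3: translate into a matrix identity.} Substituting, the condition of Step 1 becomes
$${}^t(Fe_i)\,\Theta_p\,e_j+{}^te_i\,\Theta_p\,(Fe_j)=0 \quad\text{for all } i,j,$$
that is, ${}^te_i\big({}^tF\Theta_p+\Theta_pF\big)e_j=0$ for all $i,j$. Since the $e_i$ form a basis, this gives ${}^tF\Theta_p+\Theta_pF=0$, so $F\in\mathfrak{r}_p$.

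As a consistency check, the block description of $\mathfrak{r}_p$ (trace-zero diagonal blocks $A_{ii}$ and $A_{ji}=\widetilde{A_{ij}}$) coincides exactly with the shape of $F$ obtained earlier from the three families of relations $\delta_\mu\varphi_2(e_1,e_{2i},e_{2j})=0$, and the analogous ones. Either route gives the result.

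\textbf{Main obstacle.} The only delicate point is bookkeeping of conventions. One must identify $\mu|_{V_2\times V_2}$ with the bilinear form of $\Theta_p$, keeping track of the transpose and the overall sign. One must also check that the paper's description of $\mathfrak{r}_p$, namely the condition ${}^tA\Theta_p+\Theta_pA=0$, uses the same convention for matrices (columns as images of basis vectors) as the matrix $F$ of $f$. I would settle this first on the $p=1$ block, where the condition reduces to $\operatorname{tr}F=0$, matching $C_{13}^3=-C_{12}^2$.
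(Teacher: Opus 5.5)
Your proof is correct and follows essentially the paper's route: the paper uses $\delta_\mu\varphi_2(e_1,e_i,e_j)=0$, equivalently $\tilde f\in\mathcal{D}er(\mu)$, to get the block shape of $F$ ($\mathrm{tr}\,A_{ii}=0$, $A_{ji}=\widetilde{A_{ij}}$), which is exactly its block description of $\mathfrak{r}_p$, while you write the same identity directly as ${}^tF\Theta_p+\Theta_pF=0$. Your worry about the matrix convention is unfounded, since ${}^t\Theta_p=\Theta_p^{-1}=-\Theta_p$ makes $\mathfrak{r}_p$ stable under transposition.
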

Since $\det f=0$, we have reduced the matrix $F$ by taking the first column $C_1(F)=0$, implying that the second line 
$L_2(F)=0.$

The main problem consist to reduce $F$ by a change of Darboux basis. This is equivalent to say
\begin{proposition}
The problem to reduce the matrix $F$ by a change of Darboux basis, and then the problem of classification of contact Lie algebras, is equivalent to study the adjoint representation of the Lie group $R_p$ on its Lie algebra $\mathfrak{r}_p$
\end{proposition}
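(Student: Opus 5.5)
The plan is to make precise what a ``change of Darboux basis'' is, and then to track how the data $(\varphi_1,\varphi_2)$, and in particular the matrix $F$, transform under it.

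First I fix the group of admissible changes. A change of basis that keeps the contact form in canonical form must send the Reeb vector $e_1$ to itself: the Reeb vector is intrinsically attached to $\om$ by $\om(e_1)=1$ and $i(e_1)d\om=0$. If we also ask it to preserve $V_2=\K\{e_2,\dots,e_{2p+1}\}$, which is $\ker\om$ and hence intrinsic, the change has the block form $P=\mathrm{diag}(1,P_1)$ already introduced in the subsection on classification modulo $GL(2p)$. Requiring that $d\om_1$ keep the form $\om_2\we\om_3+\cdots+\om_{2p}\we\om_{2p+1}$ means ${}^tP_1\Theta_pP_1=\Theta_p$, that is, $P_1\in R_p$. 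I will then check that such a $P$ is an automorphism of the Heisenberg bracket $\mu_0$, so that the $2$-compatible decomposition $(\mu_0,\varphi_1,\varphi_2)$ is carried to a decomposition of the same type. The new $\varphi_2$ is determined by $f'(X)=\varphi_2'(e_1,X)$, which gives $F'=P_1^{-1}FP_1$. So the action on $F$ is the adjoint action of $R_p$ on $\mathfrak{r}_p$, and the previous proposition guarantees that this action is well defined, since $F\in\mathfrak{r}_p$.

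Second, I have to argue that nothing is lost by restricting to these $P$. Given two Darboux bases for contact forms on isomorphic algebras, the isomorphism $\Phi$ transports one contact form to a contact form on the target algebra. If the two contact forms correspond under $\Phi$, then $\Phi$ maps Reeb vector to Reeb vector and kernel to kernel, so in Darboux coordinates $\Phi$ is exactly of the form $\mathrm{diag}(1,P_1)$ with $P_1\in R_p$. Hence the pair $(\g,\om)$ up to isomorphism corresponds to orbits of the simultaneous action of $R_p$ on $F$ and on $\varphi_1$. The action on $\varphi_1$ is the natural one, $\varphi_1\mapsto P_1^{-1}\varphi_1(P_1\cdot,P_1\cdot)$. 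Reducing $F$ to a normal form in its adjoint orbit is then the first step of the classification; afterwards one acts on $\varphi_1$ with the stabilizer of $F$, subject to the constraints that $f$ is a derivation of $\varphi_1$ and that $\varphi_1\circ\varphi_1=-\delta_{\mu_0}\varphi_2$.

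The main obstacle is the word ``equivalent''. Strictly speaking, the classification of contact Lie algebras is coarser than the classification of pairs $(\g,\om)$, because an algebra may carry several non-equivalent contact forms. It also involves $\varphi_1$, not only $F$. I would therefore state and prove the equivalence at the level of $F$: two contact structures give conjugate data $F$ exactly when their matrices lie in the same $\mathrm{Ad}(R_p)$-orbit. I would present the remaining reduction of $\varphi_1$ under the stabilizer as the subordinate step, which is how the statement should be read.
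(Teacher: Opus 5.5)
Your proposal is correct and follows the paper's own reasoning. A change of Darboux basis is exactly a matrix $P=\mathrm{diag}(1,P_1)$ with $P_1\in R_p$, under which $F\mapsto P_1^{-1}FP_1$, i.e.\ the adjoint action of $R_p$ on $\mathfrak{r}_p$; the paper only records $\mathrm{Ad}_g(A)=gAg^{-1}$, relying on the preceding discussion of $R_p$ and on $F\in\mathfrak{r}_p$. Your caveat is accurate and makes explicit what the paper leaves implicit: the ``equivalence'' holds literally only at the level of $F$, with $\varphi_1$ still to be reduced by the stabilizer of $F$, and the choice of contact form also left open.
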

In fact, since $R_p$ is a matricial Lie group, that is a subgroup of $GL(2p,\K)$, then the adjoint representation  of $R_p$ on the Lie algebra $\mathfrak{r}_p$ is given by
$$Ad_g(A)=g A g ^{-1}, \ \ A \in \mathfrak{r}_p, g \in R_p.$$

\noindent \textbf{The subalgebra $\mathfrak{r_0}_p$}. 
As $0$ is an eigenvalue of the endomorphism $f$ associated with $\varphi_2$, we are led to focus on matrices of $\mathfrak{r}_p$ whose first column and second row are zero that is of the form 
$$A=\begin{pmatrix}
     0 & a_{12} & -a_{42} & a_{32} & \cdots & a_{2p,2} & a_{2p-1,2} \\
     0 &0& 0 & 0 &  \cdots & 0 & 0 \\
     0 & a_{32}& a_{33} & a_{34} & \cdots &-a _{2p,4} & a_{2p-1,4} \\
     0 & a_{42}&a_{43} & a_{44} & \cdots & _{2p,3} & -a_{2p-1,3} \\
     \cdots \\
     0 & a_{2p-1,2}& a_{2p-1,3} & a_{2p-1,4} & \cdots &a_{2p-1,2p-1} & a_{2p-1,2p} \\
     0 & a_{2p,2}&a_{2p,3} & a_{2p,4} & \cdots &a _{2p,2p-1} & -a_{2p-1,2p-1} \\
     \end{pmatrix}.$$
Let’s note down by $\mathfrak{r_0}_p$ the set of these matrices.
\begin{proposition}
The subspace $\mathfrak{r_0}_p$ of $\mathfrak{r}_p$ is a Lie subalgebra of $\mathfrak{r}_p$ and
$$\dim \mathfrak{r_0}_p=p(2p-1).$$
\end{proposition}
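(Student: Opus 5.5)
The plan is to describe $\mathfrak{r_0}_p$ intrinsically rather than by its matrix shape. Write the elements of $\K^{2p}$ in the basis $\{e_2,\dots,e_{2p+1}\}$ and set $\Theta_p(x,y)={}^t x\,\Theta_p\, y$. The defining relation ${}^tA\Theta_p+\Theta_pA=0$ of $\mathfrak{r}_p$ says exactly that
$$\Theta_p(Ax,y)+\Theta_p(x,Ay)=0 \quad \text{for all } x,y.$$
The claim is then
$$\mathfrak{r_0}_p=\{A\in\mathfrak{r}_p,\ A e_2=0\},$$
which I prove by comparing the two conditions (first column, second row) separately.

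\textbf{Step 1: reduce to a single condition.} The first column of $A$ is the vector $Ae_2$, so "first column zero" is the same as $Ae_2=0$. The entries of the second row are the $e_3$-coordinates of the vectors $Ae_j$. Since $\Theta_p(e_2,e_k)=\delta_{k,3}$, these coordinates equal $\Theta_p(e_2,Ae_j)$. By the invariance relation,
$$\Theta_p(e_2,Ae_j)=-\Theta_p(Ae_2,e_j).$$
Hence if $Ae_2=0$, the second row vanishes automatically; this matches the observation made just before the statement. So $\mathfrak{r_0}_p$ is the annihilator in $\mathfrak{r}_p$ of the vector $e_2$.

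\textbf{Step 2: subalgebra.} This is now immediate. If $A,B\in\mathfrak{r}_p$ with $Ae_2=Be_2=0$, then $[A,B]\in\mathfrak{r}_p$ because $\mathfrak{r}_p$ is a Lie algebra. Moreover
$$[A,B]e_2=A(Be_2)-B(Ae_2)=0.$$

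\textbf{Step 3: dimension.} Consider the linear map $\pi:\mathfrak{r}_p\to\K^{2p}$, $A\mapsto Ae_2$, whose kernel is $\mathfrak{r_0}_p$. I will show that $\pi$ is surjective. Since $\Theta_p$ is skew-symmetric, the condition ${}^tA\Theta_p+\Theta_pA=0$ is equivalent to $\Theta_pA$ being symmetric. So $\mathfrak{r}_p=\{\Theta_p^{-1}S,\ {}^tS=S\}$, which recovers $\dim\mathfrak{r}_p=p(2p+1)$. Then $\pi(\Theta_p^{-1}S)=\Theta_p^{-1}(Se_2)$. The first column $Se_2$ of a symmetric matrix can be any vector of $\K^{2p}$, and $\Theta_p^{-1}$ is invertible, so $\pi$ is onto. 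Therefore
$$\dim\mathfrak{r_0}_p=p(2p+1)-2p=p(2p-1).$$

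The only point requiring care is Step 1: checking that the row/column description in the displayed matrix really coincides with the annihilator of $e_2$. This needs careful tracking of the sign convention $\Theta_p(e_2,e_3)=1$. Alternatively one can verify it directly from the block description $A_{ji}=\widetilde{A_{ij}}$, $\operatorname{tr}A_{ii}=0$: a zero first column of $A_{11}$ and of each $A_{j1}$ forces, through the cofactor transformation, a zero second row in $A_{11}$ and in each $\widetilde{A_{j1}}$.
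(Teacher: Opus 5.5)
Your argument is correct. The paper gives no proof of this proposition. It only displays the shape of the matrices and, just before, remarks that $C_1(F)=0$ forces $L_2(F)=0$.

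The implicit justification in the paper is a parameter count on the block description $A_{ji}=\widetilde{A_{ij}}$, $\mathrm{tr}A_{ii}=0$. This is the same count the paper uses for $\dim\mathfrak{r}_p=p(2p+1)$. Making the first column vanish removes two parameters from $A_{11}$ and two from each $A_{j1}$, $j\geq 2$, i.e.\ $2p$ in total. The zeros in the second row of $A_{11}$ and of each $\widetilde{A_{j1}}$ then come for free. Closure under the commutator is left to a direct matrix computation.

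Your route is different and cleaner. Rewriting the defining relation as $\Theta_p(Ax,y)+\Theta_p(x,Ay)=0$ identifies $\mathfrak{r_0}_p$ with the annihilator of $e_2$ in $\mathfrak{r}_p\cong sp(2p,\K)$. The subalgebra property then reduces to the one-line identity $[A,B]e_2=0$. The dimension follows from rank--nullity once evaluation at $e_2$ is onto, and your description $\mathfrak{r}_p=\Theta_p^{-1}\{S:\ {}^tS=S\}$ gives that immediately.

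This buys several things. It does not depend on the block bookkeeping. It explains why the second-row condition is redundant; by nondegeneracy of $\Theta_p$ it is in fact equivalent to the first-column condition. It also ties $\mathfrak{r_0}_p$ directly to the paper's motivation, $e_2\in\ker f$, showing it as the isotropy algebra of a vector. The block count is more hands-on, but it only uses the explicit form of $\mathfrak{r}_p$ already written out in the paper.
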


\subsection{Classification of diagonal contact Lie algebras}
Let $\g$ be a $2p+1$-dimensional contact Lie algebra and $\g=(\mu_0,\varphi_1,\varphi_2)$ its associated $2$-compatible structure (the Darboux basis is fixed). We say that $\g$ is diagonal if the endomorphism $f$ of $V_2$ associated with $\varphi_2$ is diagonal.

\textbf{Throughout this section, we make the following assumption: }
\begin{center}
\textit{The non-zero eigenvalues are simple and $0$ is of order $2$}
\end{center}

Let $\mathrm{Spec}^*(f)$ be the spectrum of $f$ without the null eigenvalue, the eigenvalues of this spectrum can be equal, that is  $\mathrm{Card}(\mathrm{Spec}^*(f)=\dim \g -\dim \mathrm{Ker f}$. Let $E$ be the $\K$-vector space generated by $\mathrm{Spec}^*(f)$  and $F$ be the kernel of the linear system:
$$\lambda_I+\lambda_j-\lambda_k =0, \ \ \lambda_i,\lambda_j,\lambda_k \in \mathrm{Spec}^*(f).$$ 
A root system $\mathcal{R}(\g)$ is a subset of $\mathrm{Spec}^*(f)$ which is a basis of $E/F$. 
\begin{definition}
The rank of $\g$ is $r(\g)=\frac{1}{2}\mathrm{Card}(\mathcal{R}).$
\end{definition}

\subsubsection{We assume $r(\g)=p-1$} that is the rank of $\g$ is maximal. In this case $\dim \mathrm{ker}(f)=2.$ Let's set $\mathrm{Spec}^*(f)=\{\lambda_4,\lambda_5,\cdots,\lambda_{2p},\lambda_{2p+1}\}$ with $\lambda_{2i+1}=-\lambda_{2i}.$ Then we have
$$f(e_{2i})=\lambda_{2i} e_{2i}, \ f(e_{2i+1})=-\lambda_{2i}e_{2i+1}, \ f(e_2)=f(e_2)=0.$$ Since $\f$ is a derivation of $\varphi_1$, we obtain
$$
\left\{
\begin{array}{l}
\varphi_1(e_2,e_3)=ae_2+be_3,\\
    \varphi_1(e_{2i},e_{2i+1})=a_ie_2+b_ie_3, \ \ i=2,\cdots,p   \\
    \varphi_1(e_2,e_{2i})=c_{2i}e_{2i}, \  \varphi_1(e_2,e_{2i+1})=c_{2i+1}e_{2i+1} \\
    \varphi_1(e_3,e_{2i})=d_{2i}e_{2i}, \  \varphi_1(e_3,e_{2i+1})=d_{2i+1}e_{2i+1} \\
    \varphi_1(e_k,e_l)=0 \ (k,l) \neq  (2i,2i+1).
\end{array}
\right.
$$
The condition $\delta_{\mu_0}\varphi_1=0$ gives
$$a_i=-d_{2i}-d_{2i+1}, \ \ b_i=c_{2i}+c_{2i+1}, \ \ i=2, \cdots,p.$$
Then, $\varphi_1$ is parametrized by $a_1,b_1$ and the other eigenvalues of $ad_{\varphi_1}e_2$ and of $ad_{\varphi_1}e_3$. Assume $p\geq 3$. Then  the condition $\varphi_1 \circ \varphi_1+ \delta_{\mu_0}\varphi_2=0$ is equivalent to the system
$$
\left\{
\begin{array}{l}
     a_i(a-a_i)=0   \\
     b_i(a-a_i)=0 \\
     a_i(b-b_i)=0 \\
     b_i(b-b_i)=0 \\
     a_jc_{2i}+b_jd_{2i}=-\lambda_{2i}, \ \ i \neq j \\
      ac_{2i}+bd_{2i}=-\lambda_{2i},\\
     a_jc_{2i+1}+b_jd_{2i+1}=\lambda_{2i}, \ \ i \neq j,\\
     ac_{2i+1}+bd_{2i+1}=\lambda_{2i}, \\
     d_{2i}+d_{2i+1}=-a_i,\\
     c_{2i}+c_{2i+1}=b_i.    
\end{array}
\right.
$$
Recall that, by hypothesis, $\lambda_i \neq 0$ for any $i \geq 2$. This shows in particular that $a$ and $b$ cannot be simultaneously null. Assume $a \neq 0$. By a change of Darboux basis given by $Y_2= e_2+\frac{b}{a}e_3$ and $Y_3=e_3$, we can assume that $b=0$ which implies $b=0$ for $i \geq 2$ and also $a_i=a$, We even considered that $a=1$.  These hypothesis  imply $b_i=0$ The previous system is therefore reduced to
$$
\left\{
\begin{array}{l}
    a_i(1-a_i)=0, \ i \geq 2 \\
    c_{2i}=-\lambda_{2i}, \ \ i \geq 2 \\
    c_{2i+1}=\lambda_{2i}, \ \ i \geq 2 \\
    a_jc_{2i}=-\lambda_{2i}, \ \ i \geq 2,\ j \neq i \\
    a_jc_{2i+1}=\lambda_{2i}, \ \ i \geq 2, \ j \neq i \\
d_{2i}+d_{2i+1}=-a_i.
\end{array}
\right.
$$

If $\dim \g \geq 7$, then $a_j =1$. If $\dim \g =5$, the fourth and fifth equations do not exist. We deduce from them that $a_2=0$ or $a_2=1$.
\begin{theorem}
Let $\g$ be a $(2p+1)$-dimensional diagonal contact Lie algebra with $p\geq 3$ of maximal rank. Then, up an isomorphism, the structural equations of $\g$ are: 
\begin{enumerate}[label=\Alph*),resume]
\item $
\left\{
\begin{array}{l}
   d\om_1= \om_2 \we \om_3 + \om_4 \we \om_5 + \cdots + \om_{2p} \we \om_{2p+1} \\
    d\om_2=  \om_2 \we \om_3 + \om_4 \we \om_5 + \cdots + \om_{2p} \we \om_{2p+1} \\
    d\om_3=0 \\
    d\om_4=\left(\lambda_4\om_1-\lambda_4\om_2+d_4 \om_3\right) \we \om_4 \\
    d\om_5=\big(-\lambda_4\om_1+\lambda_4\om_2+(-1-d_4) \om_3\big) \we \om_5 \\
    \cdots \\
    d\om_{2p}=(\lambda_p\om_1-\lambda_p\om_2+d_{2p} \om_3)\we \om_{2p} \\
    d\om_{2p+1}=\big(-\lambda_p\om_1+\lambda_p\om_2+(-1-d_{2p}) \om_3\big)\we \om_{2p+1} \\
\end{array}
\right.
$
\end{enumerate}
\end{theorem}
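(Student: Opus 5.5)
The plan is to finish the reduction begun just before the statement and then translate the resulting brackets into structural equations via $d\om_k(X,Y)=-\om_k[X,Y]$.

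\emph{Setup.} Fix a Darboux basis $\{e_1,\dots,e_{2p+1}\}$ in which $f$ is diagonal, with $f(e_2)=f(e_3)=0$, $f(e_{2i})=\lambda_{2i}e_{2i}$ and $f(e_{2i+1})=-\lambda_{2i}e_{2i+1}$. This is possible because $F\in\mathfrak{r}_p$ and maximal rank forces $\dim\ker f=2$.

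\emph{Step 1: the form of $\varphi_1$.} Since $f\in\mathcal{D}er(\varphi_1)$, the bracket $\varphi_1(e_k,e_l)$ lies in the eigenspace for the sum of the corresponding eigenvalues. The simplicity of the nonzero eigenvalues, together with maximal rank (no relation $\pm\lambda_i\pm\lambda_j=\pm\lambda_k$ among the nonzero eigenvalues), kills every product except the ones listed in the displayed ansatz. Imposing $\delta_{\mu_0}\varphi_1=0$ gives $a_i=-d_{2i}-d_{2i+1}$ and $b_i=c_{2i}+c_{2i+1}$.

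\emph{Step 2: normalization.} Expand $\varphi_1\circ\varphi_1+\delta_{\mu_0}\varphi_2=0$ on the triples $(e_2,e_3,e_k)$, $(e_{2i},e_{2i+1},e_k)$ and $(e_{2j},e_{2j+1},e_{2i})$. Here $\delta_{\mu_0}\varphi_2(e_{2i},e_{2i+1},e_k)=f(e_k)$ by the computation in Proposition~\ref{fi2}. This yields the displayed system. Because $\lambda_{2i}\neq0$, the pair $(a,b)$ is nonzero. Using the symplectic transvection/scaling of the plane $\K\{e_2,e_3\}$ (an element of $R_p$ acting only on $A_{11}$), which preserves $f|_{\ker f}=0$, we reach $b=0$ and $a=1$. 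A rescaling of $e_2$ must be compensated by $e_3$; I would check that the resulting rescaling of $\om_1$ is absorbed by an overall rescaling of the contact form. Then $b_i(1-a_i)=0$ and $b_i(b-b_i)=0$ give $b_i=0$. The equations then reduce to $c_{2i}=-\lambda_{2i}$ and $c_{2i+1}=\lambda_{2i}$.

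\emph{Step 3: using $p\ge3$.} Since $p\geq3$, for each $i$ there is some $j\neq i$ with $j\geq2$. The equation $a_jc_{2i}=-\lambda_{2i}=c_{2i}$ with $c_{2i}\neq0$ forces $a_j=1$, so $a_j=1$ for all $j\ge2$. Then $d_{2i+1}=-1-d_{2i}$, leaving the $d_{2i}$ as free parameters alongside the $\lambda_{2i}$.

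\emph{Step 4: dualizing.} The final brackets are
\[
[e_2,e_3]=e_1+e_2,\quad [e_{2i},e_{2i+1}]=e_1+e_2,\quad [e_1,e_{2i}]=\lambda_{2i}e_{2i},\quad [e_1,e_{2i+1}]=-\lambda_{2i}e_{2i+1},
\]
\[
[e_2,e_{2i}]=-\lambda_{2i}e_{2i},\quad [e_2,e_{2i+1}]=\lambda_{2i}e_{2i+1},\quad [e_3,e_{2i}]=d_{2i}e_{2i},\quad [e_3,e_{2i+1}]=(-1-d_{2i})e_{2i+1}.
\]
Dualizing these gives exactly list A). Signs have to be tracked carefully here, matching the sign convention of the Darboux form $d\om_1=\sum\om_{2i}\we\om_{2i+1}$. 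One should also verify the full Jacobi identity on all triples involving $e_1$, i.e. that $f$ is a derivation of $\varphi_1$ and that $\varphi_2\circ\varphi_2=0$. Both follow from the eigenvalue bookkeeping.

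\emph{Main obstacle.} I expect the main obstacle to be Step 2's completeness: showing that the $\varphi_1\circ\varphi_1$ expansion yields precisely the displayed system, with no extra constraints and nothing missed, on triples mixing $e_2,e_3$ with two different planes. I would handle this by computing $\varphi_1\circ\varphi_1$ on each type of triple separately, since all brackets are monomial in the eigenbasis.
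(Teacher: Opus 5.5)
Your proposal is correct and follows essentially the same route as the paper: the eigenvalue argument fixing the shape of $\varphi_1$, and the cocycle relations $a_i=-d_{2i}-d_{2i+1}$, $b_i=c_{2i}+c_{2i+1}$. It continues as the paper does, with the expansion of $\varphi_1\circ\varphi_1+\delta_{\mu_0}\varphi_2=0$ (which does give exactly the displayed system), the normalization $a=1$, $b=0$ by a symplectic change of the plane $\K\{e_2,e_3\}$, and the use of $p\geq 3$ to force every $a_j=1$. Two small remarks: your $SL(2)$-transitivity argument also covers the case $a=0$, $b\neq 0$, which the paper leaves implicit; and no rescaling of $\om_1$ is needed, since $e_2\mapsto ae_2$, $e_3\mapsto a^{-1}e_3$ is already symplectic and fixes $e_1$.
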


\noindent\textbf{Remark:} $\dim \g=5$ ( $p=2$). In dimension $5$ we have two models corresponding to $a_2=1$ or $a_2=0$, that is
$$ 
\left\{
\begin{array}{l}
d\om_1=\om_2 \we \om_3 + \om_4 \we \om_5,\\
d\om_2=\om_2 \we \om_3 + \om_4 \we \om_5,\\ 
d\om_3=0,\\
d\om_4= (\lambda_4 \om_1 \we \om_4-\lambda_4 \om_2 \we \om_4 + (-1-d_4)\om_3 \we \om_4,\\
d\om_5=(\lambda_4 \om_1 \we \om_6+ \lambda_4 \om_2 \we \om_5)+(-1-d_4)\om_3 \we \om_5
\end{array}
\right.
, $$
$$
\left\{
\begin{array}{l}
d\om_1=\om_2 \we \om_3 + \om_4 \we \om_5,\\
d\om_2=\om_2 \we \om_3 \\ 
d\om_3=0,\\
d\om_4= (\lambda_4 \om_1 \we \om_4-\lambda_4 \om_2 \we \om_4 + (-1-d_4)\om_3 \we \om_4,\\
d\om_5=(\lambda_4 \om_1 \we \om_6+ \lambda_4 \om_2 \we \om_5)-d_4\om_3 \we \om_5
\end{array}
\right.
.$$

\medskip

\subsubsection{We assume that $r(\g)=p-2$} That is there exist a linear relation $\lambda_i+\lambda_j=\lambda_k$.  The notations are the same as above. Since there is no order for the eigenvalues of $f$, we can assume that the only connection between the roots is:
$$\lambda_4+\lambda_6=\lambda_8$$
with the associated relations
$$
 \lambda_6-\lambda_8=-\lambda_4, \ \lambda_4-\lambda_8=-\lambda_6,\ -\lambda_4+\lambda_8=\lambda_6, \ -\lambda_6+\lambda_8=\lambda_4, 
 -\lambda_4-\lambda_6=-\lambda_8.$$
The structure constants are identical to the previous case with the following exception
$$
\left\{
\begin{array}{l}
\varphi_1(e_4,e_6)=\alpha_1 e_8,   \ \varphi_1(e_5,e_8)=\alpha_2 e_6, \ \varphi_1(e_7,e_8)=\alpha_3e_4.\\
\varphi_1(e_5,e_7)=\beta_1 e_9, \ \varphi_1(e_4,e_9)=\beta_2 e_7, \ \varphi_1(e_6,e_9)=\beta_3e_5, 
\end{array}
\right.
$$
The Jacobi conditions imply $a_2(1-a_3)=a_3(1-a_3)-a_4(1-a_4)=0.$ As we want to determine these classes of algebras up to isomorphism, we can only consider the following cases
$$
\begin{array}{ll}
i) & a_2=a_3=a_4=1\\
ii) & a_2=a_3=1,a_4=0\\
iii) & a_2=1, a_3=a_4=0\\
iv) & a_2=a_3=a_4=0.
\end{array}
$$

\noindent \textit{Case i)}. We obtain two cases
$$(\alpha_1,\epsilon_2,\epsilon_3,\beta_1,\beta_2,\beta3)=(\alpha_1,0,0,0,\beta_2,\alpha_1+\beta_2) \ \ \mathrm{or} \ \ (0,\alpha2,\alpha_3,\alpha-\alpha_2,0,0).$$
\begin{theorem}
Let $\g$ be a $(2p+1)$-dimensional diagonal contact Lie algebra with $p\geq 4$ and $r(\g)=p-2, \ p \geq 4$. Then, up an isomorphism, the structural equations of $\g$ are: 
\begin{enumerate}[label=\Alph*),resume]
\item $
\left\{
\begin{array}{l}
   d\om_1= \om_2 \we \om_3 + \om_4 \we \om_5 + \cdots + \om_{2p} \we \om_{2p+1} \\
    d\om_2=  \om_2 \we \om_3 + \om_4 \we \om_5 + \cdots + \om_{2p} \we \om_{2p+1} \\
    d\om_3=0 \\
    d\om_4=\left(\lambda_4\om_1-\lambda_4\om_2+d_4 \om_3\right) \we \om_4 \\
    d\om_5=\big(-\lambda_4\om_1+\lambda_4\om_2+(-1-d_4) \om_3\big) \we \om_5 +(\alpha_1+\beta_2) \om_6 \we \om_9\\
    d\om_6=\left(\lambda_6\om_1-\lambda_6\om_2+d_6 \om_3\right) \we \om_6 \\
    d\om_7=\big(-\lambda_6\om_1+\lambda_6\om_2+(-1-d_6) \om_3\big) \we \om_7 +\beta_2 \om_4 \we \om_9\\
    d\om_8=\left((\lambda_4+\lambda_6)(\om_1-\om_2)+d_8 \om_3\right) \we \om_8 + \alpha_1 \om_4 \we \om_6\\
    d\om_9=\big(-(\lambda_4-\lambda_6)(\om_1-\om_2)+(-1-d_8) \om_3\big) \we \om_9 \\
    \cdots \\
    d\om_{2p}=(\lambda_p(\om_1-\om_2)+d_{2p} \om_3)\we \om_{2p} \\
    d\om_{2p+1}=\big(-\lambda_p(\om_1-\om_2)+(-1-d_{2p}) \om_3\big)\we \om_{2p+1} \\
    \mathrm{with} \ d_4+d_6-d_8=0 
\end{array}
\right.
$
\item $
\left\{
\begin{array}{l}
   d\om_1= \om_2 \we \om_3 + \om_4 \we \om_5 + \cdots + \om_{2p} \we \om_{2p+1} \\
    d\om_2=  \om_2 \we \om_3 + \om_4 \we \om_5 + \cdots + \om_{2p} \we \om_{2p+1} \\
    d\om_3=0 \\
    d\om_4=\left(\lambda_4\om_1-\lambda_4\om_2+d_4 \om_3\right) \we \om_4+\alpha_3 \om_7 \we \om_8 \\
    d\om_5=\big(-\lambda_4\om_1+\lambda_4\om_2+(-1-d_4) \om_3\big) \we \om_5\\
    d\om_6=\left(\lambda_6\om_1-\lambda_6\om_2+d_6 \om_3\right) \we \om_6+ \alpha_2 \om_5 \we \om_8 \\
    d\om_7=\big(-\lambda_6\om_1+\lambda_6\om_2+(-1-d_6) \om_3\big) \we \om_7 \\
    d\om_8=\left((\lambda_4+\lambda_6)(\om_1\om_2)+d_8 \om_3\right) \we \om_8 \\
    d\om_9=\big(-(\lambda_4-\lambda_6)(\om_1+\om_2)+(-1-d_8) \om_3\big) \we \om_9+ (\alpha_3-\alpha_2) \om_5 \we \om_7 \\
    \cdots \\
    d\om_{2p}=(\lambda_p\om_1-\lambda_p\om_2+d_{2p} \om_3)\we \om_{2p} \\
    d\om_{2p+1}=\big(-\lambda_p\om_1+\lambda_p\om_2+(-1-d_{2p}) \om_3\big)\we \om_{2p+1} \\
    \mathrm{with} \  d_4+d_6-d_8+1=0.
\end{array}
\right.
$
\end{enumerate}
All these algebras are extensions of frobenusian Lie algebras
\end{theorem}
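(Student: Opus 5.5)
We follow the scheme used for the maximal-rank theorem. We fix a Darboux basis in which $f$ is diagonal, $\ker f=\K\{e_2,e_3\}$, $f(e_{2i})=\lambda_{2i}e_{2i}$ and $f(e_{2i+1})=-\lambda_{2i}e_{2i+1}$. We order the eigenvalues so that the only additive relation in $\mathrm{Spec}^*(f)$ is $\lambda_4+\lambda_6=\lambda_8$, together with the five relations obtained from it by sign changes.

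\textbf{Step 1: the support of $\varphi_1$.} Since $f$, and hence $f_s=f$, is a derivation of $\varphi_1$ (Proposition on the Jordan--Chevalley decomposition), we have $\varphi_1(C_\alpha,C_\beta)\subset C_{\alpha+\beta}$. This gives the same list of possible nonzero brackets as in the maximal rank case, namely
\begin{itemize}
\item $\varphi_1(e_2,e_3)$ and $\varphi_1(e_{2i},e_{2i+1})$ lying in $\K\{e_2,e_3\}$;
\item $\varphi_1(e_2,\cdot)$ and $\varphi_1(e_3,\cdot)$ diagonal on $e_k$, $k\geq 4$.
\end{itemize}
In addition there are exactly six new brackets, one for each form of the relation: $\varphi_1(e_4,e_6)=\alpha_1e_8$, $\varphi_1(e_5,e_8)=\alpha_2e_6$, $\varphi_1(e_7,e_8)=\alpha_3e_4$, $\varphi_1(e_5,e_7)=\beta_1e_9$, $\varphi_1(e_4,e_9)=\beta_2e_7$ and $\varphi_1(e_6,e_9)=\beta_3e_5$.

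\textbf{Step 2: the equations \eqref{EQ2} and \eqref{EQ3}.} Equation \eqref{EQ2} again forces $a_i=-d_{2i}-d_{2i+1}$ and $b_i=c_{2i}+c_{2i+1}$. It also imposes linear conditions on the six new constants, obtained by evaluating $\delta_\mu\varphi_1$ on triples such as $(e_4,e_6,e_9)$ and $(e_5,e_7,e_8)$. These should yield $\beta_3=\alpha_1+\beta_2$ and $\beta_1=\alpha_3-\alpha_2$ (up to sign conventions).

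For \eqref{EQ3}, the same normalisation as before applies. Some $a$ or $b$ is nonzero because $\lambda_{2i}\neq0$. A change of Darboux basis in $\K\{e_2,e_3\}$ then gives $a=1$ and $b=b_i=0$, and hence $c_{2i}=-\lambda_{2i}$ and $c_{2i+1}=\lambda_{2i}$. Since $p\geq4$, the cross equations $a_jc_{2i}=-\lambda_{2i}$ with $j\neq i$ hold for at least one index $i\ne j$ with $c_{2i}\neq 0$, and they force $a_j=1$ for all $j$. This is why only case i) survives as $p\geq4$.

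The new step is to evaluate $\varphi_1\circ\varphi_1+\delta_\mu\varphi_2$ on the triples of the form $(e_3,e_k,e_l)$ and $(e_k,e_l,e_m)$ that involve the six new brackets. On triples containing $e_3$ this produces the conditions linking the $d$'s: $d_4+d_6-d_8=0$ in the first family and $d_4+d_6-d_8+1=0$ in the second. On triples $(e_4,e_6,e_9)$, $(e_5,e_7,e_8)$ and similar ones, the conditions are quadratic in the $\alpha$'s and $\beta$'s, of the form $\alpha_1\alpha_2=0$ and $\beta_2\alpha_3=0$.

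\textbf{Step 3: the dichotomy.} Step 2 should split into two alternatives:
\begin{itemize}
\item $\alpha_2=\alpha_3=\beta_1=0$, with $\beta_3=\alpha_1+\beta_2$;
\item $\alpha_1=\beta_2=\beta_3=0$, with $\beta_1=\alpha_3-\alpha_2$.
\end{itemize}
The remaining equations of \eqref{EQ4} then hold automatically, since $f$ is a derivation by construction. Equation \eqref{EQ5} is automatic as well.

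\textbf{Step 4: the structural equations and the extension.} We translate each alternative into structural equations through $d\om_k(e_i,e_j)=-\om_k[e_i,e_j]$, with $[\ ,\ ]=\mu_0+\varphi_1+\varphi_2$. This gives families B) and C). For the final claim, $\om_3$ is closed. Its kernel $\K\{e_1,e_2,e_4,\dots,e_{2p+1}\}$ is therefore an ideal of codimension one, and $\g$ is obtained from it by the derivation $\ad e_3$. We would check that $\om_1-\om_2$ restricted to this ideal has nondegenerate differential, so that the ideal is Frobeniusian, as in the Remark on Frobeniusian extensions.

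The main obstacle is Step 2: handling the quadratic system in $(\alpha_i,\beta_i)$ correctly without missing a branch. To keep it manageable, we would first use that $\ad_{\varphi_1}e_2$ acts diagonally, with eigenvalues $c_k$, on the six new brackets. This gives the linear constraints from the $e_2$-triples, and the bilinear ones are analysed only afterwards.
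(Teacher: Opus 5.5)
Your classification follows the paper's route: the grading of $\varphi_1$ by the eigenvalues of $f$, the six new constants, the cocycle relations $\beta_3=\alpha_1+\beta_2$ and $\beta_1=\alpha_3-\alpha_2$, the normalisation $a=1$, $b=b_i=0$, and the split by $d_4+d_6-d_8\in\{0,-1\}$ coming from the $e_3$-triples. Two steps, however, fail as written.

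\textbf{The Frobeniusian claim in Step 4.} In both families $d\om_1=d\om_2$, so $\om_1-\om_2$ is closed and its restriction to $\ker\om_3$ has zero differential. Worse, $e_1+e_2$ is central in $\ker\om_3=\K\{e_1,e_2,e_4,\dots,e_{2p+1}\}$. Indeed, each $d\om_k$ involves $\om_1,\om_2$ only through $\om_1-\om_2$ or $\om_2\we\om_3$, and both vanish when contracted with $e_1+e_2$ and restricted to $\ker\om_3$. So $\ker\om_3$ is not Frobeniusian for any choice of form.

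The roles must be exchanged. Because $\om_1-\om_2$ is closed, $\h=\ker(\om_1-\om_2)=\K\{e_1+e_2,e_3,\dots,e_{2p+1}\}$ is a codimension-one ideal with $e_1\notin\h$. The restrictions of $\om_2,\dots,\om_{2p+1}$ form a basis of $\h^*$, and $\om_1|_\h=\om_2|_\h$. Moreover $d\om_1|_\h=\om_2\we\om_3+\sum_{i\ge 2}\om_{2i}\we\om_{2i+1}$ is nondegenerate, so $\om_1|_\h$ is a Frobenius form. Then $\g$ is the extension of $\h$ by $\ad e_1$, which kills $e_1+e_2$ and $e_3$ and equals $f$ on $e_4,\dots,e_{2p+1}$. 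This $\h$ is exactly the $8$-dimensional Frobeniusian algebra written down in the paper's $\dim\g=9$ example.

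\textbf{The derivation of $a_j=1$.} Your argument works for $p\ge 5$, where you can use an index outside $\{4,\dots,9\}$. But the statement includes $p=4$. There every pair $\{e_{2j},e_{2j+1}\}$ lies in the triplet, so every cross equation on $(e_{2j},e_{2j+1},e_k)$ picks up a product of new constants. For instance, with your normalisation $c_{2i}=-\lambda_{2i}$, $c_{2i+1}=\lambda_{2i}$, the triples $(e_4,e_5,e_6)$ and $(e_4,e_5,e_8)$ give $(1-a_2)\lambda_6+\alpha_1\alpha_2=0$ and $(1-a_2)\lambda_8-\alpha_1\alpha_2=0$. No unperturbed equation $a_jc_{2i}=-\lambda_{2i}$ is available.

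The repair is to add such pairs of equations:
\begin{itemize}
\item the two triples above give $(1-a_2)(\lambda_6+\lambda_8)=0$;
\item $(e_6,e_7,e_4)$ and $(e_6,e_7,e_8)$ give $(1-a_3)(\lambda_4+\lambda_8)=0$;
\item $(e_8,e_9,e_5)$ and $(e_8,e_9,e_6)$ give $(1-a_4)(\lambda_6-\lambda_4)=0$.
\end{itemize}
Simplicity of the nonzero eigenvalues then forces $a_2=a_3=a_4=1$, which is what the paper's case analysis i)--iv) achieves. The same triples, together with $(e_4,e_5,e_7)$, $(e_6,e_7,e_5)$ and $(e_8,e_9,e_7)$, give the quadratic conditions $\alpha_1\alpha_2=\alpha_1\alpha_3=\alpha_2\beta_3=\beta_1\beta_2=\beta_1\beta_3=\alpha_3\beta_2=0$. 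The triples $(e_4,e_6,e_9)$ and $(e_5,e_7,e_8)$ you pointed to only reproduce the cocycle relations.

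\textbf{A missing subcase.} If all six constants vanish, the $e_3$-triples impose nothing on $d_4+d_6-d_8$. That subcase is family A) with $\lambda_8=\lambda_4+\lambda_6$, and it lies in B) or C) only when $d_4+d_6-d_8\in\{0,-1\}$. Your Step 3 must treat it separately; the statement as written does not list it either.
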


\noindent \textit{Case ii), iii), iv)}. These cases lead to a contradiction. In fact the Jacobi conditions imply that  one of the eigenvalues is null.
From the hypothesis, this  is impossible. 

\begin{corollary}
Let $\g$ be a $(2p+1)$-dimensional diagonal contact Lie algebra with $p\geq 4$ of maximal rank. Then $\g$ is an extension of a $2p$-dimensional frobeniusian Lie algebra.
\end{corollary}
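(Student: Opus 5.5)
We have to prove the final Corollary: a $(2p+1)$-dimensional diagonal contact Lie algebra $\g$ with $p\geq 4$ (under the standing assumption on the spectrum of $f$) is an extension of a $2p$-dimensional frobeniusian Lie algebra by a derivation. The plan is to read off the structure from the normal forms already obtained, A) in the maximal rank case and B), C) when $r(\g)=p-2$, since each diagonal contact algebra is isomorphic to one of these. In every model the dual relations show that $\om_3$ is closed. Hence $I=\ker \om_3=\K\{e_1,e_2,e_4,\ldots,e_{2p+1}\}$ contains $[\g,\g]$, so it is a $2p$-dimensional ideal of $\g$. We then have $\g=I\oplus \K e_3$ with $[e_3,X]=D(X)$, where $D=\ad e_3|_I$ is a derivation of $I$. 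This matches the converse remark following the Proposition on extensions by derivations.

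The key step is to exhibit a linear form $\alpha\in I^*$ with $d\alpha$ nondegenerate on $I$, i.e. to show $I$ is frobeniusian. I would first try $\alpha=\om_1-\om_2$ restricted to $I$. On $I$ (setting $\om_3=0$) the difference $d\om_1-d\om_2$ vanishes, so this fails. Instead I would use $\alpha=\om_2$, or a combination $\om_1+t\om_2$. Restricted to $I$, $d\om_1=d\om_2=\sum_{i\geq2}\om_{2i}\we\om_{2i+1}$, which already pairs $e_4,\ldots,e_{2p+1}$. It remains to pair $e_1$ with $e_2$. This pairing comes from the brackets $[e_1,e_{2i}]=\lambda_{2i}e_{2i}$ and $[e_2,e_{2i}]=-\lambda_{2i}e_{2i}$, i.e. $e_1+e_2$ acts trivially, so we need a form $\alpha$ involving the $\om_{2i}$.

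The natural choice is $\alpha=\om_1+\sum_{i}(\om_{2i}+\om_{2i+1})$. Then $d\alpha$ contains terms $\lambda_{2i}(\om_1-\om_2)\we\om_{2i}$ and $-\lambda_{2i}(\om_1-\om_2)\we\om_{2i+1}$. For generic coefficients the Pfaffian of $d\alpha|_I$ is a nonzero polynomial, because the $\lambda_{2i}\neq0$ supply the coupling of the direction $e_1-e_2$. Given the presence of $\om_1-\om_2$ there, the expected check is that $(d\alpha)^p|_I\neq0$ via the single monomial $\om_1\we\om_2\we\prod\om_{2i}\we\om_{2i+1}$.

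The main obstacle is this nondegeneracy computation. The $\om_1\we\om_2$ component of $d\alpha|_I$ must be produced, and it has to come from a coefficient $c$ in front of a closed-up combination. If direct expansion is messy, I would use the eigenvector structure and restrict to a $4$-dimensional block $\K\{e_1,e_2,e_4,e_5\}$, checking a Frobenius form there (as in the $4$-dimensional model of the Remark). I would then observe that the remaining pairs $(e_{2i},e_{2i+1})$ are handled by $d\om_1$. The extra $\varphi_1$-terms in B) and C) only add off-diagonal entries among $e_5,\ldots,e_9$, and these do not destroy an open nondegeneracy condition.
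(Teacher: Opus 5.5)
There is a genuine gap: the ideal you chose, $I=\ker\om_3=\K\{e_1,e_2,e_4,\ldots,e_{2p+1}\}$, is not frobeniusian, so no choice of $\alpha\in I^*$ can work.

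You observed yourself that on $I$ the forms $\om_1$ and $\om_2$ enter the structure equations only through $\om_1-\om_2$. The term $\om_2\we\om_3$ drops out, $d\om_{2i}$ and $d\om_{2i+1}$ contain $\pm\lambda_{2i}(\om_1-\om_2)$, and the extra terms of B), C) involve neither $\om_1$ nor $\om_2$. Concretely, $[e_1,e_2]=f(e_2)=0$, and $[e_1,e_{2i}]$, $[e_2,e_{2i}]$ cancel because $c_{2i}=-\lambda_{2i}$ and $c_{2i+1}=\lambda_{2i}$. Hence $e_1+e_2$ is \emph{central} in $I$.

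For every $\alpha\in I^*$ one then has $d\alpha(e_1+e_2,\cdot)=-\alpha([e_1+e_2,\cdot])=0$. So $d\alpha|_I$ lies in $\Lambda^2$ of the $(2p-1)$-dimensional span of $\om_1-\om_2,\om_4,\ldots,\om_{2p+1}$, and $(d\alpha|_I)^p=0$ identically. The monomial $\om_1\we\om_2\we\prod\om_{2i}\we\om_{2i+1}$ you hope for never appears. The Pfaffian is the zero polynomial, not a generically nonzero one, so the ``open condition'' argument cannot rescue it.

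The same happens in your $4$-dimensional block $\K\{e_1,e_2,e_4,e_5\}$, which also has $e_1+e_2$ in its center. In the $4$-dimensional model of the Remark, by contrast, $[Y_1,Y_2]=Y_1$ supplies exactly the pairing that is missing here.

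The missing idea is to use the closed form you discarded as a Frobenius form, and use it instead to cut out the ideal. In all the normal forms $d\om_1=d\om_2$; this is the normalization $\varphi_1(e_2,e_3)=\varphi_1(e_{2i},e_{2i+1})=e_2$. So $\om_1-\om_2$ is closed, and
\[
J=\ker(\om_1-\om_2)=\K\{e_1+e_2,e_3,e_4,\ldots,e_{2p+1}\}
\]
is a codimension-one ideal not containing the Reeb vector $e_1$.

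Restrict to $J$, i.e. put $\om_1=\om_2$. Then $\om_2,\ldots,\om_{2p+1}$ restrict to a basis of $J^*$, the $\lambda$'s drop out, and
\[
d\om_2|_J=\om_2\we\om_3+\sum_{i\geq 2}\om_{2i}\we\om_{2i+1},
\]
which is nondegenerate. So $\om_2|_J$ is a Frobenius form, and these are exactly the $2p$-dimensional frobeniusian algebras displayed in the paper's example for $\dim\g=9$.

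Finally $\g=J\oplus\K e_1$. Here $\ad e_1|_J$ is the derivation induced by $f$: it acts by $\pm\lambda_{2i}$ on $e_{2i},e_{2i+1}$ and vanishes on $e_1+e_2$ and $e_3$. This is precisely the extension by the derivation $f$ described in the converse remark after the Proposition on frobeniusian extensions, and it is how the paper reads the corollary off the normal forms.
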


\noindent\textbf{Example: $\dim \g =9$.}
In case(C), $\g$is an extension of the following $8$-dimensional frobeniusian Lie algebra:
$$
\left\{
\begin{array}{l}
    d\om_2=  \om_2 \we \om_3 + \om_4 \we \om_5 + \om_{8} \we \om_{9} \\
    d\om_3=0 \\
    d\om_4=d_4 \om_3 \we \om_4 \\
    d\om_5=(-1-d_4) \om_3 \we \om_5 +(\alpha_1+\beta_2) \om_6 \we \om_9\\
    d\om_6=d_6 \om_3 \we \om_6 \\
    d\om_7=(-1-d_6) \om_3 \we \om_7 +\beta_2 \om_4 \we \om_9\\
    d\om_8=d_8 \om_3\we \om_8 + \alpha_1 \om_4 \we \om_6\\
    d\om_9=(-1-d_8) \om_3 \we \om_9 \\
    \mathrm{with} \ d_4+d_6-d_8=0 
\end{array}
\right.
$$
In case(D), $\g$ is an extension of the following $8$-dimensional frobeniusian Lie algebra: $$
\left\{
\begin{array}{l}
    d\om_2=  \om_2 \we \om_3 + \om_4 \we \om_5 + \om_6 \we \om_7 + \om_8\we \om_9 \\
    d\om_3=0 \\
    d\om_4=d_4 \om_3 \we \om_4+\alpha_3 \om_7 \we \om_8 \\
    d\om_5=(-1-d_4) \om_3 \we \om_5\\
    d\om_6=d_6 \om_3 \we \om_6+ \alpha_2 \om_5 \we \om_8 \\
    d\om_7=(-1-d_6) \om_3 \we \om_7 \\
    d\om_8=d_8 \om_3 \we \om_8 \\
    d\om_9=(-1-d_8) \om_3 \we \om_9+ (\alpha_3-\alpha_2) \om_5 \we \om_7 \\
    \mathrm{with} \  d_4+d_6-d_8+1=0.
\end{array}
\right.
$$

\noindent\textbf{A generalisation.} An element $(i,j,k) \in \mathbb{N}^3$ will be called a triplet of $ \mathcal{R}$, if the eigenvalues $\lambda_i,\lambda_j,\lambda_k$ satisfy a relation of $\mathcal{R}$. Two triplets $(i_1,i_2,i_3)$ and $(j_1,j_2,j_3)$ of $\mathcal{R}$ will be called to be unconnected if for any $k, j_k \notin (i_1,i_2,i_3)$. With the previous assumptions, if all triplets of $\mathcal{R}$ are unconnected, then the structural equations of $\g$ derive from the previous theorem.

\subsubsection{We assume that $r(\g)=p-3$ wuth two  connected triplets }
There are two types of connections between triplets that can be summarized by
$$\left\{
\begin{array}{l}
\lambda_i+\lambda_j=\lambda_k\\
\lambda_i+\lambda_k=\lambda_l
\end{array}
\right.
\ , \ \left\{
\begin{array}{l}
\lambda_i+\lambda_j=\lambda_k\\
\lambda_i+\lambda_s=\lambda_t\\
s,t \neq j,k.
\end{array}
\right.
$$

$\bullet$ In the first case, the simplest algebra is of dimension $11$, as above the general case will be deduced from this example. Our notations are always the same. The nonnull eigenvalues are $\pm \lambda_4,\pm\lambda_6,\pm\lambda_8,\pm\lambda_{10}$ with
$$\lambda_4+\lambda_6=\lambda_8, \ \lambda_4+\lambda_8=\lambda_{10}.$$
Then $\varphi_1$ is given by
$$\left\{
\begin{array}{l}
\varphi_1(e_2,e_3)=ae_2, \ \varphi_1(e_2,e_i)=c_ie_i, \ \varphi_1(e_3,e_1)=b_ie_i, \\
\varphi_1(e_{2i},e_{2i+1})=a_ie_2+b_ie_3, \ i\geq 2,\\
\varphi_1(e_4,e_6)=\alpha_1 e_8, \ \varphi_1(e_4,e_8)=\alpha_2 e_{10}, \ \varphi_1(e_4,e_9)= \beta_1e_7, \ \varphi_1(e_4,e_{11}=\beta_2e_0,\\
\varphi_1(e_5,e_7)=\beta_3 e_9, \ \varphi_1(e_5,e_8)=\alpha_3 e_{6}, \ \varphi_1(e_5,e_9)= \beta_4e_{11}, \ \varphi_1(e_5,e_{10}=\alpha_4e_8,\\
\varphi_1(e_6,e_9)=\beta_5e_5, \ \varphi_1(e_7,e_8)=\alpha_5e_4, \\
\varphi_1(e_8,e_{11})=\beta_4e_5, \ \varphi_1(e_9,e_{10}=\alpha_6e_4.
\end{array}
\right.
$$
The Jacobi conditions imply in particular
$$\left\{
\begin{array}{l}
a \neq 0, \ (a=1), \ a_i=1, \ b_i=0,\\
c_{2i}=-\lambda_{2i}, \ c_{2i+1}=\lambda_{2i},\\
d_{2i}+d_{2i+1}=-1
\end{array}
\right.
$$
We deduce
\begin{proposition}
With these hypothesis, $\g$ is an one dimensional extension of a frobeniusian Lie algebra.
\end{proposition}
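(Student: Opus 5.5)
We will show that $\g$ contains a $2p$-dimensional ideal $I$ not containing the Reeb vector $e_1$, on which the restriction of $\om_2$ is a Frobenius form, and then invoke the converse remark after the Frobeniusian extension proposition. This is the same route as in the $r(\g)=p-1$ and $r(\g)=p-2$ cases, where the structural equations exhibit $\g$ as an extension of the algebra whose dual is spanned by $\om_2,\dots,\om_{2p+1}$.

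\emph{Step 1: normalize.} By the Jacobi conditions just listed, $a=1$, $a_i=1$, $b_i=0$, $c_{2i}=-\lambda_{2i}$, $c_{2i+1}=\lambda_{2i}$ and $d_{2i}+d_{2i+1}=-1$. Substituting these into $\mu=\mu_0+\varphi_1+\varphi_2$ gives the structural equations. The first two are
$$d\om_1=\om_2\we\om_3+\cdots+\om_{10}\we\om_{11}, \qquad d\om_2=d\om_1 .$$
Each $d\om_k$ with $k\geq 4$ has the form
$$(\pm\lambda_k(\om_1-\om_2)+d_k\om_3)\we\om_k+(\text{terms coming from the } \alpha_i,\beta_i).$$
The terms coming from the $\alpha_i,\beta_i$ are wedges $\om_s\we\om_t$ with $s,t\geq 4$.

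\emph{Step 2: the ideal.} Change to the new basis $e_1'=e_1+e_2$, with $e_k$ unchanged for $k\geq 2$. The dual forms become $\om_1'=\om_1$, $\om_2'=\om_2-\om_1$, and $\om_k'=\om_k$ for $k\geq 3$. By Step 1, $d(\om_2-\om_1)=0$. Moreover, every $d\om_k$ with $k\geq 3$ involves $\om_1$ only through the combination $\om_1-\om_2$.

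Hence in the new basis $\om_1$ appears in $d\om_k$ only through $\om_2'$. Explicitly, the coefficient $\pm\lambda_k(\om_1-\om_2)$ becomes $\mp\lambda_k\om_2'$.

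By duality, $I=\K\{e_1,e_3,\dots,e_{2p+1}\}$ in the new coordinates is annihilated by the closed form $\om_2'$. Therefore $I=\ker\om_2'$ is an ideal of codimension one, and $e_1'=e_1+e_2$ is a complementary vector.

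The bracket $[e_1',X]$ defines a derivation $D$ of $I$. This derivation is essentially $f$ corrected by $\mathrm{ad}_{\varphi_1}e_2$. Thus $\g=I\oplus\K e_1'$ is the extension of $I$ by $D$.

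\emph{Step 3: Frobenius property.} On $I$, the restriction of $\om_1$ satisfies
$$d\om_1|_I=\om_1'\text{-terms}+\om_4\we\om_5+\cdots .$$
More precisely, $\om_2\we\om_3$ becomes $\om_1\we\om_3$ modulo $\om_2'$. So on $I$,
$$d\om_1=\om_1\we\om_3+\om_4\we\om_5+\cdots+\om_{2p}\we\om_{2p+1},$$
which is nondegenerate on the $2p$-dimensional space $I$. Hence $I$ is Frobeniusian with Frobenius form $\om_1|_I$. This is exactly the pattern of the $9$-dimensional examples (C), (D). Together with Step 2, this proves the proposition.

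\emph{Main obstacle.} The hard part is Step 2: checking that no $\alpha_i$ or $\beta_i$ term, and none of the $d_k\om_3$ terms, produces an $\om_1\we\om_s$ component that is not of the form $\om_1-\om_2$. Only the $\varphi_2$ part of the bracket involves $e_1$, and it is given by $f$, diagonal with $f(e_2)=f(e_3)=0$. The $\varphi_1$ part involves $e_2$ only through $c_k=\mp\lambda_k$, which matches $f$ up to sign. This cancellation is what makes $e_1+e_2$ act correctly, so I would verify it directly from the relations $c_{2i}=-\lambda_{2i}$ and $c_{2i+1}=\lambda_{2i}$.
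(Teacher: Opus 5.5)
Your overall route is the one the paper has in mind; the paper itself only writes ``We deduce''. The Jacobi consequences $a=a_i=1$, $b_i=0$ give $d\om_2=d\om_1$, so $\om_1-\om_2$ is closed. Its kernel $I$ is then a codimension-one ideal with $e_1\notin I$. On $I$ one has $\om_2|_I=\om_1|_I$, so $d(\om_1|_I)=\om_1\we\om_3+\om_4\we\om_5+\cdots+\om_{10}\we\om_{11}$ is nondegenerate. This is exactly how the $8$-dimensional Frobeniusian algebras in the $9$-dimensional examples arise, and your Step 3 is correct.

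Step 2, however, contains a concrete error.
\begin{itemize}
\item Since $\om_2'(e_1')=(\om_2-\om_1)(e_1+e_2)=0$, the vector $e_1'=e_1+e_2$ lies in $I=\ker\om_2'$; you even list it among the basis vectors of $I$. So it is not a complement: $\g=I\oplus\K e_1'$ is false, and $[e_1',\cdot\,]|_I$ is an inner derivation of $I$.
\item Your own observation makes this explicit. For $k\geq 2$, $d\om_k'$ does not involve $\om_1'$, which says precisely that $\om_k'([e_1',X])=0$ for $k\geq 2$, i.e. $[e_1',X]\in\K e_1'$.
\item So the cancellation $c_k=-\lambda_k$ that you single out as the ``main obstacle'' makes $\K(e_1+e_2)$ a one-dimensional ideal inside $I$. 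That is the opposite of what a complement should do.
\end{itemize}

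The fix is to take as complement any vector on which $\om_1-\om_2$ does not vanish, for instance the Reeb vector $e_1$ itself, as your opening sentence already suggests. Then $\g=I\oplus\K e_1$, and the derivation is $\ad e_1|_I$. It vanishes on $e_1+e_2$ (because $[e_1,e_2]=f(e_2)=0$) and equals $f$ on $e_3,\dots,e_{11}$. This matches the paper's converse remark that $\g$ is the ``extension of the frobeniusian algebra $I$ by the derivation $f$''.

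Finally, no check of the $\alpha_i,\beta_i,d_k$ terms is needed for the ideal property. A closed $1$-form vanishes on $[\g,\g]$, so its kernel is automatically an ideal.
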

The other conditions lead to the following cases:

\textit{First case:} $\alpha_i=\beta_i=0$ 

\textit{Second case:} $d_4+d_6-d_8=0,\alpha_i=\beta_i=0$

\noindent the other solutions of Jacobi’s equations lead to a change in Darboux’s basis in the second case and we obtain 
\begin{theorem}
Let $\g$ be  a $(11)$-dimensional diagonal contact Lie algebra  of rank  $2$. Then $\g$ is an extension of a $10$-dimensional frobeniusian Lie algebra and isomorphic to one of the following case
$$
\left\{
\begin{array}{l}
   d\om_1= \om_2 \we \om_3 + \om_4 \we \om_5 + \cdots + \om_{10} \we \om_{11} \\
    d\om_2=  \om_2 \we \om_3 + \om_4 \we \om_5 + \cdots + \om_{10} \we \om_{11} \\
    d\om_3=0 \\
    d\om_4=\left(\lambda_4\om_1-\lambda_4\om_2+d_4 \om_3\right) \we \om_4 \\
    d\om_5=\big(-\lambda_4\om_1+\lambda_4\om_2+(-1-d_4) \om_3\big) \we \om_5\\
    d\om_6=\left(\lambda_6\om_1-\lambda_6\om_2+d_6 \om_3\right) \we \om_6 \\
    d\om_7=\big(-\lambda_6\om_1+\lambda_6\om_2+(-1-d_6) \om_3\big) \we \om_7\\
    d\om_8=\left((\lambda_4+\lambda_6)(\om_1-\om_2)+d_8 \om_3\right) \we \om_8\\
    d\om_9=\big(-(\lambda_4-\lambda_6)(\om_1-\om_2)+(-1-d_8) \om_3\big) \we \om_9 \\
    d\om_{10}=\big((\lambda_4+\lambda_{8})(\om_1-\om_2)+d_{10} \om_3\big)\we \om_{10} \\
    d\om_{11}=\big(-(\lambda_4+\lambda_{8})(\om_1-\om_2)+(-1-d_{10}) \om_3\big)\we \om_{11} \\
\end{array}
\right.
$$
$$
\left\{
\begin{array}{l}
   d\om_1= \om_2 \we \om_3 + \om_4 \we \om_5 + \cdots + \om_{2p} \we \om_{2p+1} \\
    d\om_2=  \om_2 \we \om_3 + \om_4 \we \om_5 + \cdots + \om_{2p} \we \om_{2p+1} \\
    d\om_3=0 \\
    d\om_4=\left(\lambda_4\om_1-\lambda_4\om_2+d_4 \om_3\right) \we \om_4 \\
    d\om_5=\big(-\lambda_4\om_1+\lambda_4\om_2+(-1-d_4) \om_3\big) \we \om_5+ \beta_5 \om_5 \we \om_9  \\
    d\om_6=\left(\lambda_6\om_1-\lambda_6\om_2+d_6 \om_3\right) \we \om_6 \\
    d\om_7=\big(-\lambda_6\om_1+\lambda_6\om_2+(-1-d_6) \om_3\big) \we \om_7+ \beta_1 \om_4 \we \om_9  \\
    d\om_8=\left((\lambda_4+\lambda_6)(\om_1-\om_2)+d_8 \om_3\right) \we \om_8 +\alpha_1 \om_4 \we \om_6 \\
    d\om_9=\big(-(\lambda_4-\lambda_6)(\om_1-\om_2)+(-1-d_8) \om_3\big) \we \om_9 \\
    d\om_{10}=\big((\lambda_4+\lambda_{8})(\om_1-\om_2)+d_{10} \om_3\big)\we \om_{10} \\
    d\om_{11}=\big(-(\lambda_4+\lambda_{8})(\om_1-\om_2)+(-1-d_{10}) \om_3\big)\we \om_{11} \\
    d_4+d_6-d_8=0
\end{array}
\right.
$$
\end{theorem}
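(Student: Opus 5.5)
The plan is to follow the same scheme as in the maximal-rank case and in the case $r(\g)=p-2$, now for the $11$-dimensional situation with the two connected relations $\lambda_4+\lambda_6=\lambda_8$ and $\lambda_4+\lambda_8=\lambda_{10}$.

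\textbf{Step 1: the extension structure.} I start from the parametrization of $\varphi_1$ written just before the statement. That parametrization is forced by the fact that $f=\mathrm{diag}(0,0,\lambda_4,-\lambda_4,\dots,\lambda_{10},-\lambda_{10})$ is a derivation of $\varphi_1$. Indeed, $\varphi_1(e_i,e_j)$ must lie in the eigenspace of $\lambda_i+\lambda_j$, and only the sums listed in the relations of $\mathcal{R}$, together with their signed variants, land in the spectrum.

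I then use the consequences of $\varphi_1\circ\varphi_1+\delta_{\mu_0}\varphi_2=0$ already extracted above. These are $a=1$, $a_i=1$, $b_i=0$, $c_{2i}=-\lambda_{2i}$, $c_{2i+1}=\lambda_{2i}$ and $d_{2i}+d_{2i+1}=-1$. With them, $\K\{e_2,\dots,e_{11}\}$ equipped with $\mu$ restricted to it is a $10$-dimensional ideal. On this ideal, $\om_2$ is a Frobenius form, since $d\om_2=\sum\om_{2i}\we\om_{2i+1}$. This gives the extension statement via the preceding Proposition.

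\textbf{Step 2: solving the remaining equations.} Next I write the remaining Jacobi equations. These are the components of $\varphi_1\circ\varphi_1+\delta_{\mu_0}\varphi_2=0$ on triples among $e_4,\dots,e_{11}$ and on triples $(e_2\text{ or }e_3,e_i,e_j)$, together with $\delta_{\mu_0}\varphi_1=0$. They are quadratic equations in the $\alpha_k,\beta_k$ and linear equations involving $d_4,d_6,d_8,d_{10}$.

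The mixed triples $(e_3,e_i,e_j)$ give equations of the form $(d_i+d_j-d_k)\alpha=0$. So each nonzero structure constant $\alpha_1,\alpha_2,\beta_1,\dots$ forces a linear relation such as $d_4+d_6-d_8=0$ or $d_4+d_8-d_{10}=0$. The triples inside $V_2$ then give the products $\alpha_1\alpha_2$, $\alpha_3\alpha_5$, etc., which must cancel. I split into cases according to which of the two triplets carries nonzero constants. Case one is that all $\alpha_i$ and $\beta_i$ vanish. Case two is that only the first triplet is active, with $d_4+d_6-d_8=0$. The remaining case is that both triplets are active, or only the second one is.

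\textbf{Step 3: normalization, the main obstacle.} The hard part is showing that the last case, together with any configuration whose constants are distributed differently on the first triplet, reduces to the two displayed models. For this I would use changes of Darboux basis in the group $R_p$ that commute with $f$. These are diagonal rescalings $e_{2i}\mapsto t_ie_{2i}$, $e_{2i+1}\mapsto t_i^{-1}e_{2i+1}$, together with transformations mixing $e_2,e_3$ with the eigenvectors, combined with the symmetry $\lambda_{2i}\leftrightarrow-\lambda_{2i}$, which swaps $e_{2i}$ and $e_{2i+1}$.

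Using these, I expect the following. The Jacobi identity on triples like $(e_4,e_6,e_9)$ and $(e_4,e_8,e_{11})$ forces the second-triplet constants $\alpha_2,\alpha_4,\alpha_6,\beta_2,\beta_4$ to vanish when the first-triplet constants are nonzero, and conversely. The surviving constants $\alpha_1,\beta_1,\beta_5$ can then be brought to the form of the second model. A configuration with only the second triplet active is carried to the first-triplet form by relabeling the eigenvalues.

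Finally, I check that the two resulting families are not isomorphic. Compare the dimension of $[\h,\h]$, where $\h$ is the frobeniusian ideal: it is $\varphi_1$-abelian on $V_2$ minus $\K e_2$ in the first family, and not in the second.
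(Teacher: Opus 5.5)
Your Step 3 depends on a decoupling that does not hold, and with it the proof cannot be completed.

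\textbf{The two triplets do not decouple.} The triples $(e_4,e_6,e_9)$ and $(e_4,e_8,e_{11})$ give only the linear relations $\alpha_1+\beta_1=\beta_5$ and $\alpha_2+\beta_2=\beta'$. Here $\beta'$ is the coefficient in $\varphi_1(e_8,e_{11})=\beta' e_5$, which the paper also calls $\beta_4$. Each relation involves one triplet only. The two triplets interact only through quadratic relations, such as $\alpha_1\beta'+\beta_2\beta_5=0$ from $(e_4,e_6,e_{11})$, and these admit solutions with both triplets active.

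\textbf{A counterexample.} Keep $a=a_i=1$, $b_i=0$, $c_k=-\lambda_k$, take $d_4=d_6=d_8=d_{10}=0$, and set $\varphi_1(e_4,e_6)=e_8$, $\varphi_1(e_4,e_9)=-e_7$, $\varphi_1(e_4,e_8)=-e_{10}$, $\varphi_1(e_4,e_{11})=e_9$. All other $\alpha_i,\beta_i$ are $0$. Put $Z=e_1+e_2$. Then:
\begin{itemize}
\item $[Z,e_k]=0$ for $k\geq 4$ and $[e_{2i},e_{2i+1}]=Z$.
\item $\mathrm{ad}\,e_1,\mathrm{ad}\,e_2,\mathrm{ad}\,e_3$ act as commuting diagonal derivations of $\K Z\oplus\K\{e_4,\dots,e_{11}\}$. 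This uses $\lambda_8=\lambda_4+\lambda_6$, $\lambda_{10}=\lambda_4+\lambda_8$ and $d_4+d_6-d_8=d_4+d_8-d_{10}=0$.
\item All new brackets have the form $[e_4,\cdot\,]$. So the only Jacobiators in that subspace that could be non-zero are those of $(e_4,e_6,e_9)$ and $(e_4,e_8,e_{11})$, which equal $(1-1)Z$ and $(-1+1)Z$.
\end{itemize}
For generic $\lambda_4,\lambda_6$ this is an $11$-dimensional diagonal contact Lie algebra of rank $2$, with contact form $\omega_1$ and the same eigenvalue relations.

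\textbf{It is not in the list.} We have $[\g,\g]=\K Z\oplus\K\{e_4,\dots,e_{11}\}$, and $[[\g,\g],[\g,\g]]=\K\{Z,e_7,e_8,e_9,e_{10}\}$ is $5$-dimensional. The same computation for the two displayed families gives $[[\g,\g],[\g,\g]]\subset\K\{e_1+e_2,e_5,e_7,e_8\}$. So no change of basis whatsoever brings this algebra to the listed forms, and the statement as written omits at least this family. The paper's own argument only asserts that ``the other solutions of Jacobi's equations lead to a change in Darboux's basis in the second case'', so it does not supply this step either.

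\textbf{Your normalisations cannot kill constants.} A change of Darboux basis fixing $e_1$ and commuting with $f$ preserves $\ker f$ and every eigenline of $f$. It therefore cannot mix $e_2,e_3$ with eigenvectors, and it acts on the $\alpha_i,\beta_i$ only by non-zero rescalings. Such maps can normalise a non-zero constant but never make it vanish.

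\textbf{Smaller error in Step 1.} $\K\{e_2,\dots,e_{11}\}$ is not an ideal, nor even a subalgebra, since $[e_{2i},e_{2i+1}]=e_1+e_2$. The correct codimension-one ideal is $\ker(\omega_1-\omega_2)=\K\{e_1+e_2,e_3,\dots,e_{11}\}$. It is an ideal because $a=a_i=1$ and $b_i=0$ give $d\omega_1=d\omega_2$, so $\omega_1-\omega_2$ is closed. The restriction of $\omega_2$ to it is a Frobenius form, and $\g$ is its extension by $\mathrm{ad}\,e_1$. This is what the paper intends, and it is easily repaired.
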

\noindent\textbf{Generalisation.} As before, this case is generalized to dimension $2p+1$, $p \geq 5$ provided that the triplets are not connected.

$\bullet$ Let’s study the second case, that is, when the triplets are of the type
$$\left\{
\begin{array}{l}
\lambda_i+\lambda_j=\lambda_k\\
\lambda_i+\lambda_s=\lambda_t\\
s,t \neq j,k.
\end{array}
\right.
$$
The smallest possible dimension of $\g$ is $13$. We can assume that relationships on eigenvalues are
$$\lambda_4+\lambda_6=\lambda_8, \ \ \lambda_4+\lambda_{10}=\lambda_{12}$$
(recall that $\lambda_i \neq \lambda_j \neq 0$). This implies that $\varphi_1(é_2,e_3) \neq 0$ and we can thake $\varphi_1(e_2,e_3)=e_2$. We deduce in particular
$$d\om_1=d\om_2=\sum_{i \leq 6} \om_52i)\we \om_{2i+1}, \ d\om_3=0$$
and $\g$ is a one dimensional extension of a frobeniusian Lie algebra. The other structure constants are given by
$$
\begin{array}{l}
d\om_4=(\lambda_4(\om_1-\om_2)+d_4\om_3)\we \om_4+ \alpha_{7,8}\om_7 \we \om_8 + \alpha_{11,12} \om_{11} \we \om_{12}\\
d\om_5=(-\lambda_4(\om_1-\om_2)+d_4\om_3)\we \om_5+ \alpha_{6,9}\om_6 \we \om_9 + \alpha_{10,13} \om_{10} \we \om_{13}\\
d\om_6=(\lambda_6(\om_1-\om_2)+d_6)\we \om_6+ \alpha_{5,8}\om_5 \we \om_8 \\ d\om_7=(-\lambda_6(\om_1-\om_2)+d_7)\we \om_7+ \alpha_{4,9}\om_4 \we \om_9\\
d\om_8=(\lambda_8(\om_1-\om_2)+d_8)\we \om_8+ \alpha_{4,6}\om_4 \we \om_6 \\ d\om_9=(-\lambda_6(\om_1-\om_2)+d_9)\we \om_9+ \alpha_{5,7}\om_5 \we \om_7\\
d\om_{10}=(\lambda_{10}(\om_1-\om_2)+d_{10})\we \om_{10}+ \alpha_{5,12}\om_5 \we \om_{12} \\ d\om_{11}=(-\lambda_{10}(\om_1-\om_2)+d_{11})\we \om_{11}+ \alpha_{4,13}\om_4 \we \om_{13}\\
d\om_{12}=(\lambda_{12}(\om_1-\om_2)+d_{12})\we \om_{12}+ \alpha_{4,10}\om_4 \we \om_{10} \\ d\om_{13}=(-\lambda_{12}(\om_1-\om_2)+d_{13})\we \om_{13}+ \alpha_{5,11}\om_5 \we \om_{11}\\
\end{array}
$$
i) If $d_4+d_6-d_8 \neq 0,1$ and $d_4+d_{10}-d_{12} \neq 0,1$ then for any $i,j$, $\alpha_{i,j}=0$ and $\g$ is a one dimensional extension of the model of frobeniusian Lie algebra (\cite{Go2}). 

ii) If $d_4+d_6-d_8 =0$ and $d_4+d_{10}-d_{12} \neq 0,1$ then all parameters $\alpha_{i,j}$ are $0$ except $\alpha_{4,6}$ and $\alpha_{4,9}$. Likewise si $d_4+d_6-d_8 \neq 0,1$ and $d_4+d_{10}-d_{12} =0$. in this case $\alpha_{4,13}=\alpha_{4,10}=0$, but this case is isomorphic to the previous one.  

iii) We also recover an isomorphic situation when $d_4+d_6-d_8 =1 $ and $d_4+d_{10}-d_{12} \neq 0,1$ or when $d_4+d_6-d_8 \neq 0,1$ and $d_4+d_{10}-d_{12} =1$. 

iv) If $d_4+d_6-d_8 =0$ and $d_4+d_{10}-d_{12} =0$ then the only possibly non-null parameters are $\alpha_{6,9},\alpha_{4,9},\alpha_{4,6}$ and $\alpha_{10,13},\alpha_{5,11},\alpha_{4,10}$. The other cases, namely $d_4+d_6-d_8 =0, d_4+d_{10}-d_{12} =1$, $d_4+d_6-d_8 =1, d_4+d_{10}-d_{12} =0$and $d_4+d_6-d_8 =1, d_4+d_{10}-d_{12} =1$ are isomorphic, via a Darboux change of basis, to the first.

\begin{theorem}
Let $\g$ be  a $(13)$-dimensional diagonal contact Lie algebra  of rank  $3$. Then $\g$ is an extension of a $10$-dimensional frobeniusian Lie algebra and isomorphic to one of the following case
$$
\left\{
\begin{array}{l}
   d\om_1= \om_2 \we \om_3 + \om_4 \we \om_5 + \cdots + \om_{12} \we \om_{13} \\
    d\om_2=  \om_2 \we \om_3 + \om_4 \we \om_5 + \cdots + \om_{12} \we \om_{13} \\
    d\om_3=0 \\
    d\om_4=(\lambda_4(\om_1-\om_2)+d_4 \om_3) \we \om_4 \\
    d\om_5=(-\lambda_4(\om_1-\om_2)+(-1-d_4) \om_3) \we \om_5\\
    d\om_6=(\lambda_6(\om_1-\om_2)+d_6 \om_3) \we \om_6 \\
    d\om_7=(-\lambda_6(\om_1-\om_2+(-1-d_6) \om_3) \we \om_7\\
    d\om_8=\left((\lambda_4+\lambda_6)(\om_1-\om_2)+d_8 \om_3\right) \we \om_8\\
    d\om_9=\big(-(\lambda_4-\lambda_6)(\om_1-\om_2)+(-1-d_8) \om_3\big) \we \om_9 \\
     d\om_{10}=(\lambda_{10}(\om_1-\om_2)+d_{10} \om_3)\we \om_{10} \\
    d\om_{11}=(-\lambda_{10})(\om_1-\om_2)+(-1-d_{10}) \om_3)\we \om_{11} \\
    d\om_{12}=\big((\lambda_4+\lambda_{10})(\om_1-\om_2)+d_{12} \om_3\big)\we \om_{12} \\
    d\om_{13}=\big(-(\lambda_4+\lambda_{10})(\om_1-\om_2)+(-1-d_{12}) \om_3\big)\we \om_{13} \\
\end{array}
\right.
$$
$$
\left\{
\begin{array}{l}
   d\om_1= \om_2 \we \om_3 + \om_4 \we \om_5 + \cdots + \om_{12} \we \om_{13} \\
    d\om_2=  \om_2 \we \om_3 + \om_4 \we \om_5 + \cdots + \om_{12} \we \om_{13} \\
    d\om_3=0 \\
    d\om_4=(\lambda_4(\om_1-\om_2)+d_4 \om_3) \we \om_4 \\
    d\om_5=(-\lambda_4(\om_1-\om_2)+(-1-d_4) \om_3) \we \om_5+\alpha_{6,9}\om_6 \we \om_9+ \alpha_{10,13} \om_{10} \we \om_{13}\\
    d\om_6=(\lambda_6(\om_1-\om_2)+d_6 \om_3) \we \om_6 \\
    d\om_7=(-\lambda_6(\om_1-\om_2+(-1-d_6) \om_3) \we \om_7+\alpha_{4,9}\om_4 \we \om_9\\
    d\om_8=\left((\lambda_4+\lambda_6)(\om_1-\om_2)+d_8 \om_3\right) \we \om_8+\alpha_{4,6}\om_4 \we \om_6\\
    d\om_9=\big(-(\lambda_4-\lambda_6)(\om_1-\om_2)+(-1-d_8) \om_3\big) \we \om_9 \\
     d\om_{10}=(\lambda_{10}(\om_1-\om_2)+d_{10} \om_3)\we \om_{10} \\
    d\om_{11}=(-\lambda_{10})(\om_1-\om_2)+(-1-d_{10}) \om_3)\we \om_{11}+ \alpha_{4,13} \om_{4} \we \om_{13} \\
    d\om_{12}=\big((\lambda_4+\lambda_{10})(\om_1-\om_2)+d_{12} \om_3\big)\we \om_{102}+ \alpha_{4,10} \om_{4} \we \om_{10} \\
    d\om_{13}=\big(-(\lambda_4+\lambda_{10})(\om_1-\om_2)+(-1-d_{12}) \om_3\big)\we \om_{13} \\
    d_4+d_6-d_8=0
\end{array}
\right.
$$
\end{theorem}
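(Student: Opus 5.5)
We prove the theorem by the same method the paper uses for ranks $p-1$ and $p-2$. We work in a Darboux basis $\{e_1,\dots,e_{13}\}$ in which $f$ is diagonal, with $f(e_2)=f(e_3)=0$ and $f(e_{2i})=\lambda_{2i}e_{2i}$, $f(e_{2i+1})=-\lambda_{2i}e_{2i+1}$. The only relations among the nonzero eigenvalues are $\lambda_4+\lambda_6=\lambda_8$, $\lambda_4+\lambda_{10}=\lambda_{12}$ and their consequences under $\lambda\mapsto-\lambda$.

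\textbf{Step 1: shape of $\varphi_1$.} Since $f$ is a derivation of $\varphi_1$, and by the eigenvalue computation of the subsection on the Jordan--Chevalley decomposition, $\varphi_1(e_i,e_j)$ lies in the eigenspace of $\lambda_i+\lambda_j$. Under the simplicity hypothesis this eigenspace is a line or zero. This determines the support of $\varphi_1$: the diagonal part ($\varphi_1(e_2,e_3)$, $\varphi_1(e_{2i},e_{2i+1})\in\K\{e_2,e_3\}$, $\varphi_1(e_2,e_k)=c_ke_k$, $\varphi_1(e_3,e_k)=d_ke_k$) together with the off-diagonal terms attached to the two triplets. These off-diagonal terms are exactly the coefficients $\alpha_{i,j}$ listed before the theorem.

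\textbf{Step 2: normalization.} We write out $\delta_{\mu_0}\varphi_1=0$ and the components of $\varphi_1\circ\varphi_1+\delta_{\mu_0}\varphi_2=0$ on triples $(e_{2i},e_{2i+1},e_k)$, using $\delta_\mu\varphi_2(e_{2i},e_{2i+1},e_k)=\varphi_2(e_1,e_k)=f(e_k)$. As in the maximal-rank case, these force $\varphi_1(e_2,e_3)\neq0$, since otherwise some $\lambda_k$ would vanish. A Darboux change $Y_2=e_2+\frac{b}{a}e_3$ then gives $\varphi_1(e_2,e_3)=e_2$. With $p=6\geq3$ we get $a_i=1$, $b_i=0$, $c_{2i}=-\lambda_{2i}$, $c_{2i+1}=\lambda_{2i}$ and $d_{2i}+d_{2i+1}=-1$. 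This yields $d\om_1=d\om_2=\sum\om_{2i}\wedge\om_{2i+1}$ and $d\om_3=0$. Hence $\K\{e_2,\dots,e_{13}\}$ modulo the $e_1$-direction is a frobeniusian algebra with Frobenius form $\om_2$, and $\g$ is its one-dimensional extension by $f$, as in the Proposition on extensions by derivations.

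\textbf{Step 3: the remaining Jacobi equations (the main obstacle).} We impose $\varphi_1\circ\varphi_1=0$ on the triples that do not contain a Darboux pair, together with the mixed triples involving $e_3$. Each triple from a triplet, for instance $(e_3,e_4,e_6)$, gives an equation of the form $\alpha\,(d_4+d_6-d_8-\epsilon)=0$ with $\epsilon\in\{0,1\}$, depending on which members of the triplet carry the $+$ or $-$ sign. So each $\alpha_{i,j}$ can be nonzero only if $d_4+d_6-d_8\in\{0,1\}$, respectively $d_4+d_{10}-d_{12}\in\{0,1\}$. This bookkeeping is where I expect the difficulty: one must check that the quadratic equations in the $\alpha$'s coming from triples such as $(e_4,e_6,e_9)$ and $(e_4,e_{10},e_{13})$ leave exactly the survivors listed in cases i)--iv).

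\textbf{Step 4: reduction of the cases.} We take the three alternatives $d_4+d_6-d_8\neq0,1$, $=0$ and $=1$, and likewise for $d_4+d_{10}-d_{12}$. The symmetric Darboux changes exchange $e_{2i}\leftrightarrow e_{2i+1}$ (up to sign) together with $\lambda\mapsto-\lambda$, and permute the two triplets. These moves turn the value $1$ into $0$ and identify the mixed cases. Two normal forms remain: all $\alpha_{i,j}=0$, and the case iv) with $d_4+d_6-d_8=0$. Rescaling basis vectors within $R_6$ keeps the surviving parameters in the displayed form, which completes the classification.
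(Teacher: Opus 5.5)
Your four steps follow the paper's own sketch: the support of $\varphi_1$ from $f\in\mathcal{D}er(\varphi_1)$, the normalization $\varphi_1(e_2,e_3)=e_2$, and a case split on $\delta_1=d_4+d_6-d_8$ and $\delta_2=d_4+d_{10}-d_{12}$. However, the identification claimed in Step~4 is false, and the paper's case iv) asserts the same identification without proof.

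With $d_{2i}+d_{2i+1}=-1$, the $(e_3,\cdot,\cdot)$-components give two alternatives. The coefficients $\alpha_{4,6},\alpha_{4,9},\alpha_{6,9}$ can be nonzero only if $\delta_1=0$. The coefficients $\alpha_{5,7},\alpha_{5,8},\alpha_{7,8}$ can be nonzero only if $\delta_1=-1$, not $+1$. The same holds with $\{4,10,13\}$, $\{5,11,12\}$ and $\delta_2$.

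Now look at your moves. The swap $e_{2i}\mapsto e_{2i+1}$, $e_{2i+1}\mapsto -e_{2i}$ replaces $\delta$ by $-1-\delta$ only when it is applied to all three pairs of a triplet. The pair $(e_4,e_5)$ lies in both triplets, so the swap acts by $(\delta_1,\delta_2)\mapsto(-1-\delta_1,-1-\delta_2)$. Exchanging the triplets just swaps $\delta_1$ and $\delta_2$. Hence the mixed case $(0,-1)$ is identified only with $(-1,0)$, never with $(0,0)$.

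The mixed case also contains genuinely new algebras. Take $\delta_1=0$, $\delta_2=-1$ and
$\varphi_1(e_4,e_6)=se_8$, $\varphi_1(e_4,e_9)=-se_7$, $\varphi_1(e_5,e_{12})=te_{10}$, $\varphi_1(e_5,e_{11})=-te_{13}$, with $st\neq 0$ and all other triplet coefficients zero.
All the equations hold.
\begin{itemize}
\item The linear relations $\alpha_{6,9}=\alpha_{4,6}+\alpha_{4,9}$ and $\alpha_{11,12}=\alpha_{5,11}+\alpha_{5,12}$ hold.
\item The quadratic constraints coupling the two active sides are $\alpha_{6,9}\alpha_{5,11}=\alpha_{6,9}\alpha_{5,12}=\alpha_{11,12}\alpha_{4,6}=\alpha_{11,12}\alpha_{4,9}=0$, coming from triples such as $(e_6,e_9,e_{11})$. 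They are satisfied because $\alpha_{6,9}=\alpha_{11,12}=0$.
\end{itemize}

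This algebra is not isomorphic to anything in the list.
\begin{itemize}
\item In every case the nilradical is $\mathfrak{n}=\K(e_1+e_2)\oplus\K\{e_4,\dots,e_{13}\}$, complemented by the torus $\K\{e_1,e_3\}$, and its center is $\K(e_1+e_2)$.
\item $\dim Z_2(\mathfrak{n})$ equals $11$ minus the number of indices occurring as arguments of a nonzero triplet bracket.
\item For this family those indices are $4,5,6,9,11,12$, so $\dim Z_2(\mathfrak{n})=5$.
\item In both displayed systems the indices lie in $\{4,6,9,10,13\}$, so $\dim Z_2(\mathfrak{n})\geq 6$.
\end{itemize}
So no choice of Darboux moves can complete Step~4, and the classification as stated misses this family.

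Smaller points:
\begin{itemize}
\item Step~3 is where the classification is actually decided, and you leave it as a plan. The constraints above show that the survivors are not simply ``one triplet, or both triplets on the $e_4$-side''.
\item Your claim $a_i=1$ ``as in the maximal-rank case'' does not cover $i=2$. Every other pair is linked to $(e_4,e_5)$ by a triplet, so the equation $a_2c_k=-\mu_k$ picks up quadratic terms. The $e_6$- and $e_8$-components of $\varphi_1\circ\varphi_1(e_4,e_5,\cdot)=-f(\cdot)$ give $\alpha_{4,6}\alpha_{5,8}=(a_2-1)\lambda_6=(1-a_2)\lambda_8$. Hence $a_2=1$, since $\lambda_6+\lambda_8\neq 0$.
\item Your argument produces an extension of a $12$-dimensional frobeniusian algebra; the ``$10$'' in the statement is a misprint.
\item The second system is a Lie algebra only if $\alpha_{6,9}=\alpha_{4,6}+\alpha_{4,9}$ and $\alpha_{10,13}=\alpha_{4,10}+\alpha_{4,13}$ (both from $\delta_{\mu_0}\varphi_1=0$). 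It also needs $\delta_2=0$ whenever the second triplet is active.
\end{itemize}
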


\section{How to approach the general case} It is not for to day.
\subsection{A schedule}
Complete the diagonal case study according to the rank of $\g$.

Focus on the reduced case. This means that $f$ is written in a Darboux basis as $f=f_s+f_n$ where $f_s$ is diagonal and $f_n$ is nilpotent and reduced in the form of Jordan. As $f_s$ and $f_n$ are derivations of $\varphi_1$, we determine this application using the diagonal case and write that $f_n$ is also a derivation.

Finally, there is the delicate aspect of reducing an endomorphism modulo the Lie group $R_p$. To illustrate this difficulty, we use the case of dimension $5$ from this perspective. 

\subsection{$5$-dimensional Contact Lie algebras, case $\K=\C$}
We saw that the matrix of $f$ could be written in the Darboux basis as
$$\begin{pmatrix}
       0&   b_{12}&-b_{42} & b_{32} \\
    0 &  0& 0 &0\\
     0&   b_{32} & b_{33}&   b_{34} \\
     0&   b_{42} & b_{43}&  - b_{33} 
\end{pmatrix} 
$$
We can make another discount by keeping a base of Darboux. If $b_{34} \neq 0$, a change of basis 
$Y_4=e_4-\frac{b_{33}}{b_{34}}, Y_5=e_5$ gives $b_{33}=0.$ We obtain the following matrices
$$F_1 \label{F1}
=\begin{pmatrix}
       0&   b_{12}&-b_{42} & 0\\
    0 &  0& 0 &0\\
     0&   0& 0&   b_{34} \\
     0&   b_{42} & b_{43}& 0\end{pmatrix} 
 \ \ \ \ {\rm or} \ \ 
F_2=\begin{pmatrix}
       0&   b_{12}&-b_{42} & 0\\
    0 &  0& 0 &0\\
     0&   0& b_{33}&   0\\
     0&   b_{42} & b_{43}&  - b_{33} 
\end{pmatrix} 
$$
\subsubsection{$f$ is diagonal}
If $f \neq 0$, we have $r(\g)=1$ and we have the following contact Lie algebras We deduce the following Lie algebras
\begin{enumerate}[label=\Alph*)]
  \item $ 
  \left\{\begin{array}{l}
     d\om_1=\om_2 \we \om_3 + \om_4 \we \om_5  \\
     d\om_2 =   \om_2 \we \om_3 \\
     d\om_3=0 \\
     d\om_4= (\lambda_4\om_1-\lambda_4\om_2 + d_4 \om_3) \we \om_4 \\ 
      d\om_5= (-\lambda_4\om_1+\lambda_4\om_2 - d_4 \om_3) \we \om_5 \\ 
\end{array}
\right. $
  \item  ($ \g $ is  an  extension  of  a  frobeniusian Lie algebra ).
  
  $ 
  \left\{\begin{array}{l}
     d\om_1=\om_2 \we \om_3 + \om_4 \we \om_5  \\
     d\om_2 =   \om_2 \we \om_3 + \om_4 \we \om_5 \\
     d\om_3=0 \\
     d\om_4= (\lambda_4\om_1-\lambda_4\om_2 +d_4\om_3) \we \om_4 \\ 
      d\om_5= (-\lambda_4\om_1+\lambda_4\om_2 - (1+d_4) \om_3) \we \om_5 \\ 
\end{array}
\right. $
\end{enumerate}
\subsubsection{$f$ is reduced}
If $rk(\g)=1$ then $f$ is written
$$\begin{pmatrix}
       0&  1&0&0 \\
    0 &  0& 0 &0\\
     0&   0 & \lambda_4& 0\\
     0&   0& 0&  -\lambda_4
\end{pmatrix} 
$$
If not, assuming that $f \neq 0$, we have the following cases for the matrix of $f$
$$\begin{pmatrix}
       0&  1&0&0 \\
    0 &  0& 1 &0\\
     0&   0 & 0& 1\\
     0&   0& 0&0
\end{pmatrix} , \ \begin{pmatrix}
       0&  0&0&0 \\
    0 &  0& 1 &0\\
     0&   0 & 0& 1\\
     0&   0& 0&0
\end{pmatrix} , \ \begin{pmatrix}
       0&  1&0&0 \\
    0 &  0& 0 &0\\
     0&   0 & 0& 1\\
     0&   0& 0&0
\end{pmatrix} , \ \begin{pmatrix}
       0&  0&0&0 \\
    0 &  0& 0 &0\\
     0&   0 & 0& 1\\
     0&   0& 0 &0
\end{pmatrix} , \ 
$$
$\bullet$ In the first case, we have
$$\left\{
\begin{array}{l}
      \varphi_1(e_2,e_3)=ae_2    \\
          \varphi_1(e_3,e_4)=be_4, \ \varphi_1(e_3,e_5)=ce_5, \\
     \varphi_1(e_4,e_5)=de_2.
\end{array}
\right.
$$
But the conditions $\delta_{\mu_0}\varphi_1=0, \varphi_1 \circ \varphi_1=-\delta_{\mu_0}\varphi_2$ implies $b+c+d=0$ and $\alpha=0$ and we have a contradiction.

$\bullet$ The second and third cases lead to a contradiction ($d(d\om_1) \neq 0$).

$\bullet$ \textit{Fourth case. }
This implies
$$\left\{
\begin{array}{l}
      \varphi_1(e_2,e_3)=ae_2+be_4,    \\
     \varphi_1(e_2,e_4)=0 , \ \varphi_1(e_2,e_5)=ee_2+fe_4,  \\
     \varphi_1(e_3,e_4)=ge_2+he_4 \ \varphi_1(e_3,e_5)=(e+g)e_3+(f+h)e_5+ me_2+pe_4, \\
     \varphi_1(e_4,e_5)=ke_2+le_4.
     \end{array}
\right.
$$
Then $I=\K\{e_1,e_2,e_4\}$ is an abelian ideal and $\g$ is a semi-direct product of $I$ by a $2$ dimensional solvable Lie algebra. 
Using a  change of Darboux basis, we can assume that $d\om_5=0$ and $d\om_3=(e+g)\om_3 \we \om_5$ that is $f+h=0$. This implies $k=f$ and $g=b-2e$. We can also reduce $\varphi_1(e_2,e_3)=ae_2+be_3.$ Si $b \neq 0$, we can take $a=0,b=1$. Othewise we have the cases $a=1,b=0$ and $a=b=0$.

 \medskip
 
\noindent i) $b=1,a=0$. The resolution of the equations $\varphi_1 \circ \varphi_1 +\delta_{\mu_0}\varphi_2=0, f \in Der(\varphi_1)$ gives the following Lie algebras
 \begin{enumerate}[label=\Alph*),resume]
\item $ 
  \left\{\begin{array}{l}
     d\om_1=\om_2 \we \om_3 + \om_4 \we \om_5  \\
     d\om_2 =   \om_3 \we \om_4 +\om_1\we \om_3\\
     d\om_3=\om_3 \we \om_5 \\ 
     d\om_4= \om_2 \we \om_3 +\om_1 \we \om_5\\ 
      d\om_5= 0 \\ 
\end{array}
\right. $
\item $ 
  \left\{\begin{array}{l}
     d\om_1=\om_2 \we \om_3 + \om_4 \we \om_5  \\
     d\om_2 = \om_2 \we \om_5 - \om_3 \we \om_4 +\om_1\we \om_3\\
     d\om_3=0 \\ 
     d\om_4= \om_2\we  \om_3 +\om_1 \we \om_5\\ 
      d\om_5= 0 \\ 
\end{array}
\right. $\item $ 
  \left\{\begin{array}{l}
     d\om_1=\om_2 \we \om_3 + \om_4 \we \om_5  \\
     d\om_2 =  \om_2 \we \om_5 - \om_3 \we \om_4 +f \om_4 \we \om_5 + \om_1\we \om_3\\
     d\om_3=0 \\ 
     d\om_4= \om_2 \we \om_3 +f (\om_2 \we \om_5 - \om_3 \we \om_4 +f \om_4 \we \om_5 )+\om_1 \we \om_5\\ 
      d\om_5= 0 \\ 
\end{array}
\right. $
\end{enumerate}

 \medskip
 
\noindent ii) $a=1,b=0$. The resolution of the equations $\varphi_1 \circ \varphi_1 +\delta_{\mu_0}\varphi_2=0, f \in Der(\varphi_1)$ implies $e=0$ and $f^2-f-1=0$ and this gives the following Lie algebras
 \begin{enumerate}[label=\Alph*),resume]
\item $ 
  \left\{\begin{array}{l}
     d\om_1=\om_2 \we \om_3 + \om_4 \we \om_5  \\
     d\om_2 =  \om_2 \we \om_3+f \om_4 \we \om_5 +\om_1\we \om_3\\
     d\om_3=0 \\ 
     d\om_4= f\om_2 \we \om_5-f \om_3 \we \om_4+l \om_4 \we \om_5 +\om_1 \we \om_5\\ 
      d\om_5= 0 \\ 
\end{array}
\right. $
with $ f^2-f-1=0.$
\end{enumerate}

 \medskip
 
\noindent iii) : $a=0,b=0$. The resolution of the equations $\varphi_1 \circ \varphi_1 +\delta_{\mu_0}\varphi_2=0, f \in Der(\varphi_1)$ implies $e=0$ and $f^2-1=0$ and we obtain
\begin{enumerate}[label=\Alph*),resume]
\item $ 
  \left\{\begin{array}{l}
     d\om_1=\om_2 \we \om_3 + \om_4 \we \om_5  \\
     d\om_2 = f \om_4 \we \om_5 +\om_1\we \om_3\\
     d\om_3=0 \\ 
     d\om_4= f\om_2 \we \om_5-f \om_3 \we \om_4+l \om_4 \we \om_5 +\om_1 \we \om_5\\ 
      d\om_5= 0 \\ 
\end{array}
\right. $
with $ f^2-1=0.$
\end{enumerate}

$\bullet$ \textit{Last case corresponding to $f(e_5)=e_4.$} Since $f$ is a derivation of $\varphi_1$, we have
$$f(\varphi_1(e_2,e_5)=\varphi_1(e_2,e_4), f(\varphi_1(e_3,e_5)=\varphi_1(e_3,e_4)$$
and $\varphi_1(e_2,e_3), \varphi_1(e_2,e_4), \varphi_1(e_4,e_5)\in {\rm ker} f$ and
$$
\left\{
\begin{array}{l}
      \varphi_1(e_2,e_3)=ae_2+be_3+ce_4    \\
       \varphi_1(e_2,e_4)=qe_4    \\  
        \varphi_1(e_2,e_5)=ee_2+fe_3+ge_4+qe_5    \\
         \varphi_1(e_3,e_4)=he_4    \\  
          \varphi_1(e_3,e_5)=je_2+ke_3+le_4+he_5    \\
           \varphi_1(e_4,e_5)=me_2+ne_3+pe_4   \\
           \end{array}
           \right.
           $$
Rather than solving the equations concerning $\varphi_1$, we can solve the structural equations $d(d\om_i)=0.$ For example, we have 
$d\om_5=h\om_3 \we \om_5 + qd \om_2 \we \om_5$. If  $q\neq 0$ , we can reduce this equation to $d\om_5= \om_2 \we \om_5$ ($q=1,h=0$) and $d(d\om_2)= 0$ is equivalent to $a=0$. In this case we find
\begin{enumerate}[label=\Alph*),resume]
\item $ 
  \left\{\begin{array}{l}
    d\om_1=\om_2 \we \om_3 + \om_4 \we \om_5  \\
     d\om_2 = c \om_2 \we \om_5\\
     d\om_3=2 \om_2 \we \om_3 + f\om_2 \we \om_5 +2 \om_4 \we \om_5\\ 
     d\om_4=  c\om_2 \we \om_3+ \om_2 \we \om_4 +g\om_2 \we \om_5-\frac{1}{2}\om_3 \we \om_5+c\om_4 \we \om_5+\om_1\we \om_5 \\
      d\om_5=  \om_2 \we \om_5.\\ 
\end{array}
\right. $
\end{enumerate}
If $q=0$ and $h \neq 0$, then $b=n=0$ and $f(a-1)=0$. If $a=1$ then $d(d\om_2) \neq 0$. Then $f=0$ we obtain the following Lie algebra
\begin{enumerate}[label=\Alph*),resume]
\item $ 
  \left\{\begin{array}{l}
    d\om_1=\om_2 \we \om_3 + \om_4 \we \om_5  \\
     d\om_2 = -2 \om_2 \we \om_3 + j\om_3 \we \om_5 -2 \om_4 \we \om_5\\
     d\om_3=c \om_3 \we \om_5\\ 
     d\om_4=  c\om_2 \we \om_3+(c^2 +\frac{1}{2})\om_2 \we \om_5 +\om_3 \we \om_4+l\om_3 \we \om_5+3c\om_4 \we \om_5+\om_1\we \om_5 \\
      d\om_5=  \om_2 \we \om_5.\\ 
\end{array}
\right. $
\end{enumerate}
If $q=h=0$, then if $a$ or $b$ are not zero, then $c=e+k$ and $jb-ak=be-af=0$ and we can reduce the equation putting $\om_3'=a\om_3-b\om_2$ (we assume $a \neq 0$) to $d\om_3=0$ and we obtain the following Lie algebra 
\begin{enumerate}[label=\Alph*),resume]
\item $ 
  \left\{\begin{array}{l}
    d\om_1=\om_2 \we \om_3 + \om_4 \we \om_5  \\
     d\om_2 = \om_2 \we \om_3  +j \om_3 \we \om_5\\
     d\om_3=0\\ 
     d\om_4=  - \om_2 \we \om_5 +l\om_3 \we \om_5+p\om_4 \we \om_5+\om_1\we \om_5 \\
      d\om_5= 0.\\ 
\end{array}
\right. $
\end{enumerate}
Finally if $q=h=a=b=0$ then $c=e+k\neq 0$ and  we obtain
\begin{enumerate}[label=\Alph*),resume]
\item $ 
  \left\{\begin{array}{l}
    d\om_1=\om_2 \we \om_3 + \om_4 \we \om_5  \\
     d\om_2 = e\om_2 \we \om_3  +j \om_3 \we \om_5\\
     d\om_3=f\om_2 \we \om_3  +(1-e)\om_3 \we \om_5\\
     d\om_4=  g \om_2 \we \om_5 +l\om_3 \we \om_5+\om_1\we \om_5 \\
      d\om_5= 0.\\ 
\end{array}
\right. $
\end{enumerate}

\subsubsection{$f$ is diagonalizable, non reduced} Recall that in the Darboux basis, after choosing the vector $e_2$ in the $\ker f$, is
$$
\begin{pmatrix}
0 & b_{12} & -b_{42} & b_{32} \\
0 & 0 & 0 & 0 \\
0 & b_{32} & b_{33} & b_{34}\\
0 & b_{42} & b_{43} & -b_{33}
\end{pmatrix}
$$
Recall also, since we assume $f \neq 0$ and diagonalizable, the eigenvalues  $f$ are $0,0,\lambda,-\lambda$ with $\lambda \neq 0.$
\begin{enumerate}
\item If $b_{32} \neq 0$ we can consider $b_{32}=0$ and $b_{42}=0$ ($e'_4=e_4+b_{42}e_5,e'_5=e_5)$. Since the multiplicity of $0$ is $2$, then $\delta=-b_{33}^2-b_{34}b_{43} \neq 0$ and $\Delta=b_{43}+b_{12}\delta=0.$ If $b_{43}=0$, then $b_{12}=0$ and $f$ can be reduced in a diagonal form modulo the Lie group $R_2$ (which transforms a Darboux basis in a Darboux basis). If $b_{43}\neq 0$, then $b_{12} \neq 0$. In this case $\ker f$ is generated by $e_2$ and $X_3=e_3+\frac{b_{33}}{\delta}e_4-b_{12}e_5$. 
The new basis $\{e_2,X_3,X_4=e_4-b_{12}e_2,X_5=e_5+\frac{b_{33}}{\delta}e_2\}$ is a Darboux basis and $f$ is also diagonalizable modulo $R_2$. 
\item If $b_{32}=0$ and $b_{34} \neq 0$, with a change of Darboux basis ($X_4=e_4-\frac{b_{33}}{b_{34}}e_5,X_5=e_5$), we can consider that $b_{33}=0$. Since $0$ is an eigenvalue of order $2$, $\delta=b_{12}b_{43}+b_{42}^2 =0.$ The case $b_{42}=0$ implies that $f$ is diagonalizable in a Darboux basis (if $b_{12}=0$), or the order of $0$ is $4.$ So we assume that $b_{42} \neq 0$. In this case $b_{12}b_{43} \neq 0$ We can assume that $b_{12}=1$. In this case the change of basis $\{X_2=e°2,X_3=e_3+e_4,X_4=e_4;X_5=e_5+e_2\}$ is in $R_2$ and the matrix of $f$ is
$$
F_1=
\begin{pmatrix}
0 & 0 & 0 & 0\\
0 & 0 & 0&0\\
0&0&0&b_{34}\\
0&0&-1&0
\end{pmatrix} 
$$
and this matrix can be reduced in a Darboux basis.
\item If $b_{32}=0$ and $b_{34} = 0$, since $0$ s of order $2$, then $b_{33}b_{12}=0.$ 
\begin{itemize}
\item If $b_{33} \neq 0, b_{12}=0$ and $v_2=e_2,v_3=e_3-\frac{b_{42}}{b_{33}}e_5, v_4=-\frac{b_{42}}{b_{33}}e_2+e_4+\frac{b_{43}}{2b_{33}}e_5$ is a basis of eigenvectors and a Darboux basis. 
\item If $b_{33}=0$, then $f_s=0.$ 
\end{itemize}
\end{enumerate}
Conclusion: If $f$ is diagonalizable, then $f$ can be reduced in a diagonal form modulo $R_2$.

\subsubsection{$f$ is not diagonalizable and not reduced}
Recall that if $f=f_s+f_n$ is the Jordan decomposition of $f$, then $f_s$ and $f_n$ also are derivations of $\varphi_1$. We have seen that, in dimension $5$, $f_s$ can be always reduced modulo $R_2$. We can therefore assume that $f_s$ is already in diagonal form. Thus, either $f_s=0$, or there exists $\lambda \neq 0$ such that $f(e_2)=f(e_3)=0,f(e_4=\lambda e_4, f(5)=-\lambda e_5$.

i) If $f_S \neq 0$, since $f_n$ commutes with $f_s$, the matrix of $f_n$ is
$$
\begin{pmatrix}
0 & b_{12} & 0 & 0\\
0 &0 & 0 & 0\\
0& 0& b_{33} & 0\\
0 & 0 & 0 & -b_{33}
\end{pmatrix}
$$
and the nilpotency of $f_n$ implies $b_{33}=0$ and $f_n$ is reduced.

ii) Assume that $f_s=0.$ Then $f_n$ nilpotent is equivalent to $b_{33}^2+b_{43}b_{34}=0.$

$\bullet$ If $b_{33}\neq 0$, there exists $\rho \neq 0$ such as $b_{34}=\rho b_{33}, b_{43}=\rho^{-1}b_{33}.$ The change of Jordan basis $(X_5=e_5-\rho e_4, X_i=e_i)$ allows us to consider that $b_{33}=0.$ 

$\bullet$ $b_{33} = 0.$ If $b_{34}=b_{43}=0$ and if $f_n$ is not reduced, its matrix is
$$
\begin{pmatrix}
0 & 0 & 0 & b_{32}\\
0 &0 & 0 & 0\\
0& b_{32}&0 & 0\\
0 & 0 & 0 & 0
\end{pmatrix}
$$
We deduce 
$$
\left\{
\begin{array}{l}
\varphi_1(e_2,e_3)=ae_2+be_4, \ \varphi_1(e_2,e_4)=0, \ \varphi_1(e_2,e_5)=ce_2+de_4,\\
\varphi_1(e_3,e_4)ee_2+fe_4, \ \varphi_1(e_4,e_5)=ge_2+he_4,\\
\varphi_1(e_3,e_5)=(g-a)e_5+(h-b)e_3+ie_2+je_4
\end{array}
\right.
$$
This shows that $I=\K\{e_1,e_2,e_4\}$ is an abelian ideal of $\g$. 

i) Assume that $[e_3,e_5]=0$. In this case, we can assume that $b=0$ (if $a \neq 0$ we consider $\tilde{\omega_4}-\om_4-\frac{b}{a}\om_2, \tilde{\om_3}=\om_3+ \frac{b}{a}\om_5$). We obtain
 $$ \mathrm{L})
  \ \ 
  \left\{\begin{array}{l}
    d\om_1=\om_2 \we \om_3 + \om_4 \we \om_5  \\
     d\om_2 = a\om_2 \we \om_3  +e \om_3 \we \om_43 + a \om_4 \we \om_5 + \om_1 \we \om_5\\
     d\om_3=0\\
     d\om_4=  -a\om_3 \we \om_4 +d\om_2 \we \om_5+\om_1\we \om_3 \\
      d\om_5= 0.\\ 
      ed+1=0
\end{array}
\right. $$

ii) Assume that $[e_3,e_5]=e_3$. This case leads to a contradiction.

\noindent Remark. A complete classification of $5$-dimensional complex Lie algebra was made by Diatta and Foreman \cite{Di}, using a completely different approach. They consider first the unimodular and indecomposable case. Next they study the case solvable and nonsolvable.


\begin{thebibliography}{99}



\bibitem{Salg1} Alvarez M.A., RodrÃ­guez-Vallarte M. C.  Salgado G. Low Dimensional Contact Lie Algebras. {\it Journal of Lie Theory}
 {\bf 29} (2019) 811-838.

\bibitem{Ba} Bagarello F., Bavuma Y., Russo F.G. Consequences of the Grunewald-O'Halloran conjecture for
algebras of pseudoquonic operators. {\it Forum Mathematicum}. To appear (2025).

 \bibitem{Bah} Bahturin Y., Goze M. $ \Z_2\times \Z_2$ -symmetric spaces
{\it Pacific J. Math.} {\bf 236} (2008), no. 1, 1-21.

\bibitem{Bay} Liu J., Sheng Y., Bai C. Maurer-Cartan characterizations and cohomologies of compatibles Lie algebras.  Sci. China, Math. 66, No. 6, 1177-1198 (2023)..

\bibitem{Co} Coll V.E., Russoniello N. Classification of contact seaweeds. {\it J. Algebra} {\bf 659}, 811-817 (2024).

\bibitem{Di}Diatta A.,Foreman B. Lattices in contact Lie groups and 5-dimensional conact sovmanifolds. Koday Math.J. 38 (2015), 228-248.


\bibitem{Go} Goze M. Sur la classe des  formes et syst\`emes invariants \`a gauche sur un groupe de Lie. (French) 
{C. R. Acad. Sci., Paris}, Ser. A 283, 499-502 (1976).

\bibitem{Go2} Goze M. 
Mod\`eles d'Alg\`ebres de Lie frobeniusiennes. 
C. R. Acad. Sci., Paris, Sér. I 293, 425-427 (1981).

\bibitem{G.H} Goze M.,  Khakimdjanov, Y. Nilpotent and Solvable Lie Algebras. In: M. Hazewinkel
(Ed.), Handbook of Algebra, Vol. 2, 615-663, North-Holland, Amsterdam. (2000).

\bibitem{G.H2}  Goze M.,  Khakimdjanov, Y. Nilpotent  Lie Algebras. Mathematics and its Applications (Dordrecht). 361. Dordrecht: Kluwer Academic Publishers. xv, 334 p. (1996).

\bibitem{G.R.contact} Goze M., Remm E. Contact and frobeniusian Lie algebras. {\it Differential Geom. Appl.} {\bf 35} (2014), 74-94.

\bibitem{G.R.gamma} Goze M., Remm E.  Riemannian  $\Gamma$ -symmetric spaces
World Scientific Publishing Co. Pte. Ltd., Hackensack, NJ, 2009, 195-206.
ISBN: 978-981-4261-16-6; 981-4261-16-5.

\bibitem{La} Ladra  M., Leite da Cunha B., Lopes S.A.  A classification of nilpotent compatible Lie algebras. Arxiv 2406.64036 (2024).









 \bibitem{Mak} Makarenko Y.N. Lie type algebras with an automorphism of finite order. J. Algebra 439, 33-66 (2015).

 
 \bibitem{Salg2} Salgado-Gonz\`alez G.  Invariants of contact Lie algebras. {\it Journal of Geometry and Physics} {\bf 144} (2019) 388-396. 


\bibitem{RV} Rodriguez-Vallarte M.M., Salgado G., Sanchez-Valenzuela O.A. On extensions of Frobenius-K\"ahler and Sasakian Lie algebras. Preprint (2025).

\end{thebibliography}
\end{document}